\documentclass{amsart}
\usepackage{hyperref}
\usepackage{tikz}
\usepackage{blkarray} 
\usepackage{multirow}
\usepackage{amssymb}
\usepackage{enumerate}

\usetikzlibrary{decorations.pathmorphing}
\usetikzlibrary{decorations.markings}

\hypersetup{
pdftitle={Excluded vertex-minors for graphs of linear rank-width at most k},
pdfauthor={Jisu Jeong, O-joung Kwon, and Sang-il Oum}
}
\newcommand\abs[1]{\lvert #1\rvert}
\newtheorem{THM}{Theorem}[section]
\newtheorem{LEM}[THM]{Lemma}
\newtheorem{COR}[THM]{Corollary}
\newtheorem{PROP}[THM]{Proposition}
\newtheorem*{thmext}{Theorem \ref{thm:blockbou}}
\newcommand\rank{\operatorname{rank}}

\newcommand\lrw{\operatorname{lrw}}

\newcommand\cutrk{\rho}%
\newcommand\pivot\wedge
\newcommand\Aut{\operatorname{Aut}}
\newcommand\norb{\operatorname{\#Orb}}
\theoremstyle{definition}

\newtheorem{QUE}{Question}
\begin{document}
\title{Excluded vertex-minors for graphs of linear rank-width at most
  $k$}
\thanks{An extended abstract appeared in 
  Proc. 30th International Symposium on Theoretical Aspects of
  Computer Science, 2013 \cite{JKO2013}.}
\address{Department of Mathematical Sciences, KAIST, 291 Daehak-ro
  Yuseong-gu Daejeon, 305-701 South Korea}
\author{Jisu Jeong} 
\email{jjisu@kaist.ac.kr}
\author{O-joung Kwon} 
\email{ojoung@kaist.ac.kr}
\author{Sang-il Oum} 
\email{sangil@kaist.edu}
\thanks{Supported by Basic Science Research
  Program through the National Research Foundation of Korea (NRF)
  funded by the Ministry of Education, Science and Technology
  (2011-0011653). S. O. is also supported by TJ Park Junior Faculty Fellowship.}
\date{\today}

\begin{abstract}
Linear rank-width is a graph width parameter, which is a variation of
rank-width by restricting its tree to a caterpillar.
As a corollary of known theorems,
for each $k$, 
there is a finite obstruction set $\mathcal{O}_k$ of graphs such that a graph $G$ has linear
rank-width at most $k$ if and only if no vertex-minor of $G$ is
isomorphic to a graph in $\mathcal{O}_k$.
However,  no attempts have been made to bound the number of graphs in
$\mathcal{O}_k$ for $k\ge 2$.
We show that for each $k$, there are at least $2^{\Omega(3^k)}$ pairwise locally
non-equivalent graphs in $\mathcal{O}_k$,
and therefore the number of graphs in $\mathcal{O}_k$ is at least
double exponential.

To prove this theorem, 
it is necessary to characterize when two graphs in $\mathcal O_k$ are
locally equivalent.
A graph is a \emph{block graph} if all of its blocks are complete graphs.
We prove that if two block graphs without simplicial vertices of degree
at least $2$ are locally
equivalent, then they are isomorphic. 
This not only is useful for our theorem but also implies a theorem of
Bouchet
 [\emph{Transforming trees by successive local complementations}, J.
  Graph Theory \textbf{12} (1988), no.~2, 195--207]
stating that if two trees are locally equivalent, then they are isomorphic. 

\end{abstract}

\keywords{rank-width, linear rank-width, vertex-minor, pivot-minor, tree-width, path-width, distance-hereditary}
\maketitle

\section{Introduction} \label{sec:intro}

Linear rank-width is a width parameter of graphs motivated by
rank-width of graphs introduced by Oum and Seymour~\cite{OS2004}.
A vertex-minor relation is a graph containment relation such that rank-width and linear rank-width cannot increase when taking vertex-minors of a graph. Two graphs $G$, $H$ are called \emph{locally equivalent} if $H$ is a vertex-minor of $G$ and $\abs{V(H)}=\abs{V(G)}$. The definitions can be found in Section~\ref{sec:prelim}.

Oum~\cite{Oum082} proved that for every infinite sequence $G_1$, $G_2,
\ldots$ of graphs of bounded rank-width, there exist $i<j$ such that $G_i$ is isomorphic to a vertex-minor of $G_j$. As a corollary, we immediately obtain the following theorem.

\begin{THM}[Oum~\cite{Oum082}]\label{thm:vertexminorwqo}
For every class $\mathcal{C}$ of graphs of bounded
rank-width, there is a finite list of graphs $G_1$, $G_2, \ldots, G_m$
such that a graph is in $\mathcal C$ if and only if it does not have a
vertex-minor isomorphic to $G_i$ for some $i\in \{1,2, \ldots, m\}$.
\end{THM}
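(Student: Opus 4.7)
The plan is to derive this theorem as an immediate corollary of the well-quasi-ordering result of Oum~\cite{Oum082} stated just above, according to which any infinite sequence of graphs of bounded rank-width contains a pair $G_i,G_j$ with $i<j$ such that $G_i$ is isomorphic to a vertex-minor of $G_j$. For the biconditional in the theorem to be satisfiable the class $\mathcal{C}$ must be closed under taking vertex-minors, which I shall assume. I would then define the obstruction set $\mathcal{O}$ to consist of those graphs $H\notin\mathcal{C}$ such that every proper vertex-minor (one with strictly fewer vertices) of $H$ lies in $\mathcal{C}$. By construction $G\in\mathcal{C}$ if and only if no element of $\mathcal{O}$ is isomorphic to a vertex-minor of $G$, so it suffices to produce a finite sub-list of $\mathcal{O}$ with the same property, i.e.\ to show that $\mathcal{O}$ contains only finitely many graphs up to local equivalence.

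The first step is to check that every $H\in\mathcal{O}$ has rank-width at most $k+1$, where $k$ is the rank-width bound for $\mathcal{C}$: for any vertex $v$ of $H$, minimality forces $H\setminus v\in\mathcal{C}$, so $\rw(H\setminus v)\le k$, and deleting a single vertex decreases rank-width by at most one. Next I would choose one representative from each local equivalence class meeting $\mathcal{O}$. Any two distinct representatives $H_1,H_2$ are pairwise incomparable in the vertex-minor preorder: if $H_1$ were a proper vertex-minor of $H_2$, then minimality of $H_2$ would give $H_1\in\mathcal{C}$, a contradiction; and if $H_1$ were a vertex-minor of $H_2$ with $\abs{V(H_1)}=\abs{V(H_2)}$, then $H_1$ and $H_2$ would be locally equivalent, contradicting that they come from different classes.

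Finally, if the set of representatives were infinite, arranging them in a sequence and invoking Oum's theorem (legitimate by the uniform rank-width bound just established) would produce indices $i<j$ with the $i$-th representative a vertex-minor of the $j$-th, contradicting the pairwise incomparability. Hence there are only finitely many representatives $G_1,\ldots,G_m$, which form the desired obstruction list. The argument is essentially automatic from well-quasi-ordering; the only points that need verification are the rank-width bound on members of $\mathcal{O}$ (so that Oum's theorem applies at all) and the passage from the vertex-minor preorder to a genuine partial order by quotienting out local equivalence, neither of which presents any real difficulty.
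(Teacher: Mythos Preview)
Your proposal is correct and matches the paper's treatment: the paper does not give an explicit proof of this theorem but simply states it as an immediate corollary of Oum's well-quasi-ordering result quoted just before, and your argument is precisely the standard derivation the paper is invoking. The only point worth flagging is that, as you note, the hypothesis that $\mathcal{C}$ is closed under vertex-minors is implicit in the statement (otherwise the biconditional cannot hold); the paper leaves this unsaid as well.
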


Because rank-width is always less than or equal to linear rank-width, we deduce the following.

\begin{COR}\label{cor:vertexminorwqo2}
For a fixed $k$, there exists a finite set $\mathcal{O}_k$ of graphs $G_1,
G_2, \ldots, G_m$ such that 
a graph has linear rank-width at most $k$ if and only if 
it does not have a vertex-minor isomorphic to $G_i$ for some $i\in \{1,2, \ldots, m\}$.
\end{COR}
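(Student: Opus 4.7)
The plan is to derive this corollary as a direct application of Theorem \ref{thm:vertexminorwqo} to the class $\mathcal{C}_k$ of graphs of linear rank-width at most $k$. To invoke that theorem, I would need to verify two things about $\mathcal{C}_k$: that it has bounded rank-width, and (implicit in the characterization, since the conclusion requires it) that it is closed under taking vertex-minors.

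For the first point, I would use the inequality $\rw(G)\le\lrw(G)$, which holds for every graph $G$ because any linear rank-decomposition is in particular a rank-decomposition. This immediately gives $\rw(G)\le k$ for every $G\in\mathcal{C}_k$, so $\mathcal{C}_k$ has bounded rank-width. For the second point, I would appeal to the fact, mentioned in the introduction of the paper, that linear rank-width does not increase when taking vertex-minors; consequently every vertex-minor of a graph in $\mathcal{C}_k$ also lies in $\mathcal{C}_k$.

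With these two facts in place, Theorem \ref{thm:vertexminorwqo} applied to $\mathcal{C}_k$ yields a finite list of graphs $G_1,G_2,\ldots,G_m$ such that $G\in\mathcal{C}_k$ if and only if $G$ has no vertex-minor isomorphic to any $G_i$. Setting $\mathcal{O}_k=\{G_1,G_2,\ldots,G_m\}$ completes the argument. There is no genuine obstacle: the substantive content of the corollary is already carried by the well-quasi-ordering theorem of Oum, and the proof amounts to matching the hypotheses on rank-width and vertex-minor closure that are built into the statement of Theorem \ref{thm:vertexminorwqo}.
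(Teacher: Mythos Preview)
Your proposal is correct and matches the paper's approach exactly: the paper derives the corollary in one line from Theorem~\ref{thm:vertexminorwqo} using the inequality $\rw(G)\le\lrw(G)$, and you have simply spelled out the (implicit) verification that the class is closed under vertex-minors.
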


However, Theorem~\ref{thm:vertexminorwqo} does not
produce an explicit upper or lower bound on the number of graphs in
$\mathcal O_k$ for Corollary~\ref{cor:vertexminorwqo2}. We aim to
prove a lower bound on $\abs{\mathcal O_k}$.
Our main result is the following.

\begin{THM}\label{thm:bigmain}
Let $k\geq 2$ be an integer. There exist at least $2^{\Omega(3^k)}$
pairwise locally non-equivalent graphs that are vertex-minor minimal
with the property that they have linear rank-width larger than $k$. 
In other words, $\abs{\mathcal O_k}\ge 2^{\Omega(3^k)}$ in Corollary~\ref{cor:vertexminorwqo2}.
\end{THM}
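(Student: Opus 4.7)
The plan is to exhibit a family $\{G_\sigma\}_{\sigma\in\Sigma}$ of block graphs with $\abs{\Sigma}=2^{\Omega(3^k)}$ such that each $G_\sigma$ is vertex-minor minimal with $\lrw(G_\sigma)>k$, contains no simplicial vertex of degree at least $2$, and distinct $\sigma,\sigma'$ yield non-isomorphic $G_\sigma,G_{\sigma'}$. The block graph rigidity statement highlighted in the abstract will then upgrade non-isomorphism to pairwise local non-equivalence for free, and counting yields the claimed bound on $\abs{\mathcal O_k}$.

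First I would construct a single \emph{base} block graph $T_k$ of linear rank-width exactly $k+1$ that is vertex-minor minimal for $\lrw>k$. A natural candidate is built from a depth-$k$ ternary branching skeleton, with small complete-graph blocks at each internal node joining its three child subtrees; the depth-$k$ ternary branching contributes $\Theta(3^k)$ leaf positions. The upper bound $\lrw(T_k)\le k+1$ would come from an explicit linear layout that peels branches off one top-level triangle at a time, combined with an induction on $k$. The lower bound $\lrw(T_k)\ge k+1$ would be established by a separator-style argument: for every linear ordering of $V(T_k)$ some prefix must meet all three subtrees of some node deeply, forcing a cut-rank of at least $k+1$, analogous to the classical depth argument for path-width of ternary trees. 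Minimality would follow by showing that any vertex deletion, and any local complementation followed by a vertex deletion, collapses a branch of some top-level triangle and so reduces the effective branching depth to $k-1$.

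Next, to amplify $T_k$ into a family of size $2^{\Omega(3^k)}$, I would attach at each of the $\Theta(3^k)$ leaf positions one of two small block-graph gadgets chosen according to a binary coordinate $\sigma_\ell\in\{0,1\}$. The gadgets must simultaneously (i) avoid creating simplicial vertices of degree at least $2$ so that the rigidity theorem applies to every $G_\sigma$, (ii) leave $\lrw$ at $k+1$ and preserve vertex-minor minimality, and (iii) be recoverable as a local isomorphism invariant at each leaf position so that different $\sigma$'s produce non-isomorphic graphs. Concrete candidates are a pendant edge versus a pendant triangle with an extra pendant attached to break simpliciality, or two pendant paths of different lengths. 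Because the leaf coordinates of the tree skeleton can be read off from the global structure of $G_\sigma$ (the skeleton itself is an isomorphism invariant), distinct $\sigma$'s force distinct $G_\sigma$, giving $2^{\Theta(3^k)}$ pairwise non-isomorphic block graphs without degree-$\ge 2$ simplicial vertices, hence $2^{\Omega(3^k)}$ pairwise locally non-equivalent members of $\mathcal O_k$.

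The main obstacle will be verifying vertex-minor minimality, because a single local complementation in general destroys the block-graph structure and so one cannot argue within the class of block graphs. The natural tool is the invariance of the cut-rank function $\cutrk$ under local complementation, which reduces the question to controlling cut-rank on a \emph{pivot-minor} of the original skeleton after any subsequent vertex deletion. Combined with the explicit caterpillar layout witnessing $\lrw(T_k)\le k+1$, this should give a uniform minimality argument whose essential content lives on the ternary tree skeleton and that remains stable under the leaf-gadget attachments, completing the proof.
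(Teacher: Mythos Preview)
Your high-level architecture matches the paper's: recursively built block graphs with ternary branching, no simplicial vertices of degree at least $2$, and Theorem~\ref{thm:blockbou} to pass from non-isomorphism to local non-equivalence. The gap is in the amplification step.

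Your concrete gadget candidates (pendant edge versus pendant triangle with an extra pendant, or pendant paths of different lengths) have different sizes. But then vertex-minor minimality cannot survive: if gadget $1$ properly contains gadget $0$ as an induced subgraph at a leaf position, then for any $\sigma$ with $\sigma_\ell=1$ the graph $G_\sigma$ has $G_{\sigma'}$ (with $\sigma'_\ell=0$, all other coordinates equal) as a proper induced subgraph, hence as a proper vertex-minor, and $\lrw(G_{\sigma'})=k+1$ by your own hypothesis (ii). So $G_\sigma$ is not minimal. Iterating, only the all-zero string $\sigma=\mathbf 0$ can give a minimal graph, and you produce a single obstruction rather than $2^{\Omega(3^k)}$ of them. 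Equal-size gadgets do not obviously help either: two same-size block-graph decorations at a leaf that are not locally equivalent in isolation may still be vertex-minors of one another once embedded in $G_\sigma$, and ruling this out is precisely the hard part.

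The paper sidesteps this by not using gadgets at all. It defines $\Delta_k$ recursively so that \emph{every} graph in $\Delta_k$ has exactly $2\cdot 3^k$ vertices, and the diversity comes from the choice of which vertex of each $\Delta_{k-1}$ component is placed on the main triangle. A single uniform argument (Lemmas~\ref{lem:elementarydel}--\ref{lem:elementarypiv}) then shows every graph in $\Delta_k$ is vertex-minor minimal, and an orbit count (Section~\ref{sec:size}) shows $\Delta_k$ contains $2^{\Omega(3^k)}$ non-isomorphic graphs. The point is that the freedom is built into the recursion rather than bolted on at the leaves, so all members of the family are structurally on the same footing and minimality does not single out one of them.
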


\begin{figure}[t]\centering
\tikzstyle{v}=[circle, draw, solid, fill=black, inner sep=0pt, minimum width=2.5pt]
\tikzset{photon/.style={decorate, decoration={snake}}}
\begin{tikzpicture}[scale=0.05]

\draw (30,55) -- (50,40) -- (40,20) -- (20,20) -- (10,40) -- (30,55);

\node [v] at (30,55) {};
\node [v] at (50,40) {};
\node [v] at (40,20) {};
\node [v] at (20,20) {};
\node [v] at (10,40) {};

\end{tikzpicture}\qquad
\begin{tikzpicture}[scale=0.047]

\draw (30,50) -- (30,60);
\draw (30,50) -- (15,30) -- (45,30) --(30,50);
\draw (15,30) -- (5,20);
\draw (45,30) -- (55,20);

\node [v] at (30,60) {};
\node [v] at (30,50) {};
\node [v] at (15,30) {};
\node [v] at (45,30) {};
\node [v] at (5,20) {};
\node [v] at (55,20) {};

\end{tikzpicture}\qquad
\begin{tikzpicture}[scale=0.053]

\draw (30,40) -- (40,25) -- (30,10) --(20,25) -- (30,40);
\draw (40,25) -- (50,25);
\draw (20,25) -- (10,25);

\node [v] at (30,40) {};
\node [v] at (40,25) {};
\node [v] at (30,10) {};
\node [v] at (20,25) {};
\node [v] at (50,25) {};
\node [v] at (10,25) {};

\end{tikzpicture}
\caption{Graphs in $\mathcal{O}_1$.}
\label{fig:lrw1}
\end{figure}
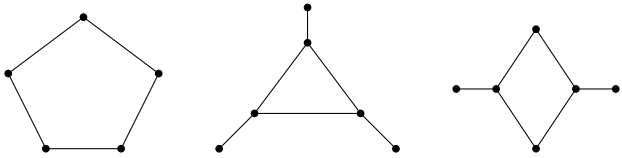

It is non-trivial to characterize the set of all graphs of linear rank-width at most
$k$ in terms of forbidden
vertex-minors. So far only
one case is known. 
For $k=1$, Adler, Farley, and Proskurowski~\cite{AFP2013} characterized
the graphs of linear rank-width at most $1$ by a set $\mathcal{O}_1$
of three graphs in Figure~\ref{fig:lrw1}. 
Ganian~\cite{Ganian10} described the structure of graphs of linear
rank-width $1$.

\begin{table} \label{table}
\begin{center}
\begin{tabular}[t]{c| c |c |c}
width parameter & relation & type & references \\ \hline
  path-width & minor & L, U & \cite{TASY94},\cite{LAG98}  \\
linear-width & minor & L & \cite{Thilikos00}  \\ 
 tree-width & minor & L, U & \cite{GKS99}, \cite{LAG98} \\ 
 tree-depth & minor, induced subgraph & L, U & \cite{DAT12}  \\ 
 rank-width & vertex-minor & U & \cite{Oum05}  \\ 
 branch-width (graphs, matroids) & minor & U & \cite{GGRW03}  
\end{tabular}
\end{center}
\caption{Known lower or upper bound of the size of the obstruction set
  for graphs of bounded width parameters. In the column of type, L and U
  mean a lower
  and upper bound, respectively.}
\end{table}

	There have been similar results on the number of forbidden minors for
various graph width parameters, see Table~\ref{table}.

	One of the main ingredients is a generalization of a theorem
        of Bouchet.
    To show Theorem~\ref{thm:bigmain}, 
	we will construct, for each non-negative integer $k$, a set $\Delta_k$
	of vertex-minor minimal graphs with the property that they have linear rank-width larger than $k$. 
	To obtain the lower bound on $\abs {\mathcal O_k}$, it is
        necessary to understand when two graphs in $\Delta_k$ are
        locally equivalent.
	We resolve this problem by showing the following stronger theorem.
        A vertex is \emph{simplicial} if the set of its neighbors is a clique.
	\begin{THM}\label{thm:blockbou}
	If two block graphs without simplicial vertices of degree 
        at least $2$ are locally equivalent, 
	then they are isomorphic. 
	\end{THM}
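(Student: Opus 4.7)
The plan is to prove Theorem~\ref{thm:blockbou} by strong induction on $\abs{V(G)}$, using the canonical split decomposition of distance-hereditary graphs (of which block graphs form a subclass). Since local equivalence respects the partition of a graph into connected components, one may assume $G$ and $H$ are connected. The canonical split decomposition is then a graph-labeled tree whose internal nodes are cliques or stars, and locally equivalent distance-hereditary graphs correspond to the same underlying decomposition tree with labels that are interchangeable between $K_n$ and $S_n$ (with some chosen center for the star), subject to the propagation constraints through marker edges induced by local complementation.

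First I would give an explicit description of the canonical split decomposition of a connected block graph $G$ in our class. Using the hypothesis that $G$ has no simplicial vertex of degree at least $2$, each block $B$ of $G$ with $\abs{B}\ge 3$ consists entirely of cut vertices (otherwise any non-cut vertex would be simplicial of degree $\abs{B}-1\ge 2$) and contributes a clique node $K_{\abs{B}}$. Each cut vertex $u$ of $G$ with $p\ge 1$ attached pendants contributes a star node $S_{p+1}$ centered at $u$, whose leaves are those pendants. The leaves of the decomposition tree correspond bijectively to the pendant vertices of $G$.

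The core argument is then a rigidity claim: within the local equivalence class of $G$, the only graphs lying in our class are isomorphic to $G$. I would proceed by analyzing how label modifications (swapping $K_n \leftrightarrow S_n$ at some node, or moving the center of a star node) alter the expanded graph. The case analysis should show that any non-trivial modification creates either a block that fails to be a complete subgraph (so the result is not a block graph) or a new simplicial vertex of degree at least $2$. Applying the rigidity to $H$ as well, its canonical split decomposition must agree with that of $G$ as a labeled tree, giving $G\cong H$.

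The main obstacle is the rigidity case analysis. Label swaps propagate through marker edges, and so a single modification at one node can have global effects in the expanded graph; one must track these propagations carefully and verify that the hypothesis on simplicial vertices precludes the problematic cases. Dropping the hypothesis would allow pairs such as $K_n$ and $S_n$ (both block graphs, locally equivalent, non-isomorphic for $n\ge 3$) into the class, so the case analysis must precisely use where the forbidden simplicial vertices would appear after a label swap.
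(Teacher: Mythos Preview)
Your approach via the canonical split decomposition is exactly the paper's route, and your structural claims (complete bags of size $\ge 3$ consist entirely of marker vertices; star bags are centered at an unmarked vertex of $G$) are the content of Propositions~\ref{prop:charblock2} and~\ref{prop:cliquesplit}. But you are anticipating more work than is actually required.

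You frame the rigidity step as a case analysis on label swaps and worry that ``swaps propagate through marker edges'' with global effects that must be tracked carefully. The paper bypasses all of this with a single observation: when a local complementation at $v$ is applied to the canonical decomposition (Lemma~\ref{lem:canonicallocal}), each bag $B$ is replaced by $B*w$ for the vertex $w$ of $B$ representing $v$, and this does not change which vertices of $B$ are marked and which are unmarked. Given this invariant, the argument is immediate. A complete bag of $D_G$ on $\ge 3$ vertices has no unmarked vertex (Proposition~\ref{prop:cliquesplit}), so the corresponding bag of $D_H$ also has no unmarked vertex, so it cannot be a star bag (whose center must be unmarked by Proposition~\ref{prop:charblock2}), so it is complete. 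By symmetry the bag types coincide, the marked/unmarked pattern in each bag is the same, and hence $D_G\cong D_H$ as marked graphs, giving $G\cong H$. No propagation analysis and no induction on $\abs{V(G)}$ are needed; you can drop the induction entirely.

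A small correction to your description of the decomposition: the star bag at a cut vertex $u$ has one leaf for \emph{each} block containing $u$, not just one per pendant. Leaves corresponding to blocks of size $\ge 3$ (or to $K_2$ blocks whose other endpoint is itself a cut vertex) are marked, while those corresponding to genuine pendants are the unmarked pendant vertices themselves.
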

        All graphs in $\Delta_k$ have %
        no simplicial vertices of degree at least $2$.
        Hence, Theorem~\ref{thm:blockbou} is useful for proving
        Theorem~\ref{thm:bigmain}.
        Since trees are block graphs without simplicial vertices of
        degree at least $2$, we
        deduce the following corollary, originally shown by Bouchet.
	\begin{COR}[Bouchet~\cite{Bouchet88}]\label{cor:treelocal}
          If two trees are locally equivalent, %
          then they are isomorphic.
	\end{COR}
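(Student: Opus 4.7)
The plan is to deduce this corollary as an immediate consequence of Theorem~\ref{thm:blockbou}, since that theorem has already been set up precisely to handle the broader class of block graphs. It suffices to verify that every tree $T$ satisfies the two hypotheses of that theorem: namely, that $T$ is a block graph, and that $T$ has no simplicial vertex of degree at least $2$. Once both are in place, the corollary drops out with no further work.

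For the first hypothesis, recall that each block of a graph is either a maximal $2$-connected subgraph or a bridge considered as a $K_2$. Since $T$ is acyclic, no $2$-connected subgraph on three or more vertices exists in $T$, and every edge of $T$ is a bridge. Hence every block of $T$ is isomorphic to $K_2$, which is trivially a complete graph, so $T$ is a block graph. For the second hypothesis, suppose $v \in V(T)$ has two distinct neighbors $u$ and $w$. Then $u$ and $w$ cannot be adjacent, for otherwise $u, v, w$ would form a triangle in $T$, contradicting acyclicity. Hence the neighborhood of any vertex of degree at least $2$ fails to be a clique, so no such vertex is simplicial.

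Combining these two observations, any two locally equivalent trees $T_1$ and $T_2$ satisfy the hypotheses of Theorem~\ref{thm:blockbou}, which then yields $T_1 \cong T_2$ as graphs. The main obstacle in proving this corollary is therefore entirely absorbed into the proof of Theorem~\ref{thm:blockbou}; once that theorem is established, the corollary reduces to the two elementary structural observations above about trees. The only minor subtlety is the convention for blocks, and under the standard convention where bridges count as $K_2$-blocks, there is nothing further to check.
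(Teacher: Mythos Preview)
Your proposal is correct and follows exactly the approach taken in the paper: the corollary is derived immediately from Theorem~\ref{thm:blockbou} by observing that trees are block graphs (all blocks are $K_2$) with no simplicial vertices of degree at least~$2$. The paper states this deduction in a single sentence without spelling out the two verifications, but your added detail is harmless and accurate.
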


	The paper is organized as follows. 
	In Section~\ref{sec:prelim}, we present necessary definitions. 
	In Section~\ref{sec:minimalvm}, we construct the set $\Delta_k$ 
	and prove that the graphs in $\Delta_k$ are vertex-minor minimal graphs 
with the property that they have linear rank-width larger than $k$.	  
	In Section~\ref{sec:notlocal}, we prove that no two non-isomorphic graphs in $\Delta_k$ are locally equivalent by showing Theorem~\ref{thm:blockbou}. 
	In Section~\ref{sec:size}, we count graphs in $\Delta_k$ up to
        isomorphism, and we conclude that $\abs{\mathcal{O}_k}\geq 2^{\Omega(3^k)}$. 
	Final remarks are made in Section~\ref{sec:final}.

\section{Preliminaries} \label{sec:prelim}

      In this paper, graphs have no loops and parallel edges. 
      Let $G=(V(G),E(G))$ be a graph with the vertex set $V(G)$ and the edge set $E(G)$. 
      For $S\subseteq V(G)$, $G[S]$ denotes the subgraph of $G$ induced on $S$. %
      And for $v\in V(G)$, we denote $N_G(v)$ as the set of the neighbors of $v$ in $G$. 
A vertex $v$ in $G$ is a \emph{leaf} if $\abs{N_G(v)}=1$. 

For an $X\times Y$ matrix $M=(m_{i,j})_{i\in X, j\in Y}$ and subsets $A\subseteq X$ and $B\subseteq Y$, $M[A,B]$ denotes the $A\times B$ submatrix $(m_{i,j})_{i\in A,j\in B}$ of $M$. 

\subsection*{Vertex-minors.} The \emph{local complementation} at a vertex $v$ of a graph $G=(V,E)$ is an operation to obtain a graph $G* v$ from $G$ by replacing the subgraph $G[N_G(v)]$ with the complementary subgraph of $G[N_G(v)]$. The graph obtained from $G$ by \emph{pivoting} an edge $uv$ is defined by $G \pivot uv=G*u*v*u$.

\begin{figure}[t]\centering
\tikzstyle{v}=[circle, draw, solid, fill=black, inner sep=0pt, minimum width=2.5pt]
\tikzset{photon/.style={decorate, decoration={snake}}}
\begin{tikzpicture}[scale=0.05]

\node [v] (a) at (10,50) {};
\node [v] (b) at (30,50) {};
\node [v] (c) at (0,30) {};
\node [v] (d) at (10,15) {};
\node [v] (e) at (30,17) {};
\node [v] (f) at (40,35) {};
\node [v] (g) at (40,10) {};

\draw (c) -- (a) -- (b) -- (f);
\draw (d) -- (a) -- (e) -- (b);
\draw (f) -- (c) -- (d);
\draw (c)-- (e);

\draw (d) -- (g) -- (f);

\draw (10,55) node{$a$};
\draw (30,55) node{$b$};
\draw (0,25) node{$c$};
\draw (10,10) node{$d$};
\draw (35,14) node{$e$};
\draw (45,35) node{$f$};
\draw (45,15) node{$g$};

\end{tikzpicture}\qquad\quad
\begin{tikzpicture}[scale=0.05]

\node [v] (a) at (10,50) {};
\node [v] (b) at (30,50) {};
\node [v] (c) at (0,30) {};
\node [v] (d) at (10,15) {};
\node [v] (e) at (30,17) {};
\node [v] (f) at (40,35) {};
\node [v] (g) at (40,10) {};

\draw (c) -- (a) -- (b) -- (f);
\draw (d) -- (a) -- (e) -- (b);
\draw (c) -- (d);

\draw (d) -- (e) --(f);
\draw (d) -- (f);

\draw (d) -- (g) -- (f);

\draw (10,55) node{$b$};
\draw (30,55) node{$a$};
\draw (0,25) node{$c$};
\draw (10,10) node{$d$};
\draw (35,14) node{$e$};
\draw (45,35) node{$f$};
\draw (45,15) node{$g$};

\end{tikzpicture}\caption{Pivoting an edge $ab$.}\label{fig:pivoting}
\end{figure}
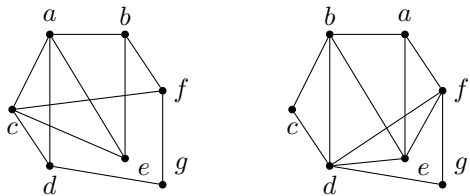

To see how we obtain the resulting graph by pivoting an edge $uv$, let $V_1=N_G(u)\cap N_G(v)$, $V_2=N_G(u)\backslash N_G(v)\backslash \{v\}$, and $V_3=N_G(v)\backslash N_G(u)\backslash \{u\}$. One can easily verify that $G\pivot uv$ is identical to the graph obtained from $G$ by complementing adjacency of vertices between distinct sets $V_i$ and $V_j$, and swapping the vertices $u$ and $v$~\cite{Oum05}. See Figure~\ref{fig:pivoting} for an example.

A graph $H$ is a \emph{vertex-minor} of $G$ if $H$ can be obtained from $G$ by applying a sequence of vertex deletions and local complementations. A graph $H$ is \emph{locally equivalent} to $G$ if $H$ can be obtained from $G$ by applying a sequence of local complementations.

      A vertex-minor $H$ of $G$ is \emph{elementary} if $\abs{V(H)}=\abs{V(G)}-1$. 
      For a set $\mathcal{C}$ of graphs closed under taking vertex-minors, a graph $G$ is an \emph{excluded vertex-minor} for $C$ if $G\notin \mathcal{C}$ and $H\in \mathcal{C}$ for every elementary vertex-minor $H$ of $G$.

\subsection*{Linear rank-width.} 
The adjacency matrix of a graph $G$, which is a $(0,1)$-matrix over the binary field, will be denoted by $A(G)$.
The \emph{cut-rank} function $\cutrk_{G} :  2^{V} \rightarrow \mathbb{Z}$ of a graph $G$ is defined by 
$\cutrk_{G}(X)=\rank(A(G)[X,V\backslash X])$.
The cut-rank function satisfies the submodular inequality \cite{OS2004}:
\[\cutrk_G(X)+\cutrk_G(Y)\ge\cutrk_G(X\cap Y)+\cutrk_G(X\cup Y).\]
This implies that $\cutrk_G(X\cup\{v\})\le \cutrk_G(X)+1$ for
$X\subseteq V(G)$ and $v\in V(G)$.

	 A \emph{linear layout} $L$ of $G$ is a sequence $(v_1, v_2, \ldots, v_{\abs{V(G)}})$ of $V(G)$. 
	 For a linear layout $L$ of $G$ and $a,b\in V(G)$, we denote $a\leq_{L} b$ or $b\ge_L a$ if $a=b$ or $a$ appears before $b$ in $L$. 
	 For two sequences $L_1=(v_1, v_2, \ldots, v_n)$ and $L_2=(w_1, w_2, \ldots, w_m)$, we define $L_1\oplus L_2=(v_1, v_2, \ldots, v_n, w_1, w_2, \ldots, w_m)$.

The \emph{width} of a linear layout $L$ in $G$, denoted by
$\lrw_L(G)$, is defined as the maximum over all $\cutrk_G(\{w:w\leq_L
v\})$ for $v\in V(G)$. 
We say that the width of $L$ is $0$ if $\abs{V(G)}\le 1$.
The \emph{linear rank-width} of $G$, denoted by $\lrw(G)$, is the
minimum width of all linear layouts of $G$.

If two graphs are locally equivalent, then they have the same linear
rank-width 
by the following proposition.
\begin{PROP}[Bouchet~\cite{Bouchet1989a}; see Oum~\cite{Oum05}]\label{prop:leq}
  Let $G$ be a graph and $v\in V(G)$. Then 
  $ \cutrk_G(X)=\cutrk_{G* v}(X)$ for all $X\subseteq V(G)$.
\end{PROP}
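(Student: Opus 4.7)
The plan is to argue directly at the level of adjacency matrices over the binary field, and to show that local complementation at $v$ changes the off-diagonal block $A(G)[X,V\setminus X]$ only by a rank-$1$ perturbation whose rows already lie in the row space of that block.

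Set $Y := V(G)\setminus X$. Since transposition preserves rank, the cases $v\in X$ and $v\in Y$ are symmetric, so I focus on $v\in X$. Let $N_X := N_G(v)\cap X$ and $N_Y := N_G(v)\cap Y$, and let $\mathbf{1}_{N_X}\in\{0,1\}^{X}$ and $\mathbf{1}_{N_Y}\in\{0,1\}^{Y}$ be their indicator vectors. The operation $G\mapsto G* v$ toggles precisely the edges inside $N_G(v)$, so exactly the entries of $A(G)[X,Y]$ indexed by $N_X\times N_Y$ are flipped; equivalently,
\[
A(G* v)[X,Y] \;=\; A(G)[X,Y] \;+\; \mathbf{1}_{N_X}\mathbf{1}_{N_Y}^{T}.
\]

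The crucial observation is that $\mathbf{1}_{N_Y}^{T}$ is itself the $v$-row of $A(G)[X,Y]$: because $v\in X$ and graphs have no loops, that row records exactly the neighbors of $v$ lying in $Y$, namely $N_Y$. Consequently every row of the rank-$1$ correction $\mathbf{1}_{N_X}\mathbf{1}_{N_Y}^{T}$ is either zero or equal to the $v$-row of $A(G)[X,Y]$, which forces the row space of $A(G* v)[X,Y]$ to be contained in the row space of $A(G)[X,Y]$. Since local complementation is an involution, $(G* v)* v = G$, the same argument applied to $G* v$ yields the reverse inclusion; the two row spaces then have the same dimension, which gives $\cutrk_{G* v}(X) = \cutrk_{G}(X)$.

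There is no serious obstacle; the only delicate point is bookkeeping. One must check that the $v$-row of $A(G)[X,Y]$ is not itself affected by the operation --- which holds because $v\notin N_G(v)$, so the rank-$1$ correction has a zero in its $v$-row --- so that ``the $v$-row'' really refers to the same vector in both $G$ and $G* v$ when we invoke the involution in the final step.
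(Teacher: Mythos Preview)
Your argument is correct. The paper does not actually prove this proposition; it is stated with citations to Bouchet and Oum and then used as a known fact, so there is no proof in the paper to compare against. Your self-contained argument---expressing the effect of local complementation on the off-diagonal block as a rank-one update by $\mathbf{1}_{N_X}\mathbf{1}_{N_Y}^{T}$, observing that $\mathbf{1}_{N_Y}^{T}$ is the $v$-row of the block, and closing with the involution $(G*v)*v=G$---is exactly the standard elementary verification and fills in what the paper leaves to the references.
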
  
It follows easily that if $H$ is a vertex-minor of $G$, then
$\lrw(H)\le \lrw(G)$.

\section{Excluded vertex-minors for graphs of bounded linear rank-width} \label{sec:minimalvm}

      To prove Theorem~\ref{thm:bigmain}, 
      we construct a set $\Delta_k$ 
      of graphs that are vertex-minor minimal with the property that
      the linear rank-width is larger than $k$. %

A \emph{delta composition} $G$ of graphs $G_1$, $G_2$, and $G_3$ is a graph obtained from the disjoint union of $G_1$, $G_2$, and $G_3$ by adding a triangle $v_1v_2v_3$ where $v_i\in V(G_i)$ for $i=1$, $2$, $3$. We call $v_1v_2v_3$ the \emph{main triangle} of $G$. For a non-negative integer $k$, we define $\Delta_k$ as follows:
\begin{enumerate}
\item $\Delta_0=\{ (\{x,y\}, \{xy\} )  \}.$ (It is isomorphic to $K_2$.)
\item For $i\geq 1$, $\Delta_i$ is the set of all delta compositions of three graphs in $\Delta_{i-1}$.
\end{enumerate}

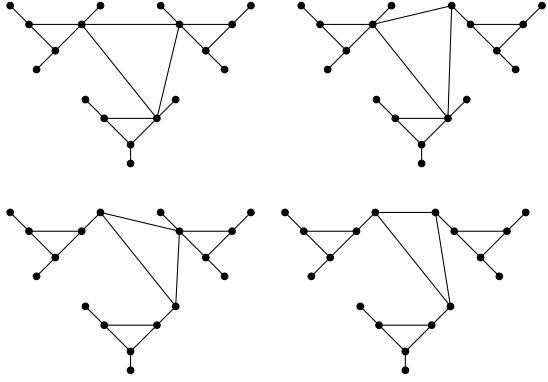
\begin{figure}\centering
 \tikzstyle{v}=[circle, draw, solid, fill=black, inner sep=0pt, minimum width=2.5pt]
\begin{tikzpicture}[scale=0.05]

\draw (8,45) -- (22,45) -- (15, 38) -- (8, 45);

\draw (28,20) -- (42,20) -- (35, 13) -- (28, 20);

\draw (48,45) -- (62,45) -- (55, 38) -- (48, 45);

\draw (8,45) -- (3, 50);
\draw (22,45) -- (27, 50);
\draw (15,38) -- (10, 33);
\draw (28,20) -- (23, 25);
\draw (42,20) -- (47, 25);
\draw (35,13) -- (35, 8);
\draw (48,45) -- (43, 50);
\draw (62,45) -- (67, 50);
\draw (55,38) -- (60, 33);

\draw (22,45) -- (42,20) -- (48,45) -- (22,45);

\node [v] at (8,45) {};
\node [v] at (3,50) {};
\node [v] at (22,45) {};
\node [v] at (27,50) {};
\node [v] at (15,38) {};
\node [v] at (10,33) {};
\node [v] at (28,20) {};
\node [v] at (23,25) {};
\node [v] at (42,20) {};
\node [v] at (47,25) {};
\node [v] at (35,13) {};
\node [v] at (35,8) {};

\node [v] at (48,45) {};
\node [v] at (43,50) {};
\node [v] at (62,45) {};
\node [v] at (67,50) {};
\node [v] at (55,38) {};
\node [v] at (60,33) {};

\node at (0,-1) {};

\end{tikzpicture}\quad
\begin{tikzpicture}[scale=0.05]

\draw (8,45) -- (22,45) -- (15, 38) -- (8, 45);

\draw (28,20) -- (42,20) -- (35, 13) -- (28, 20);

\draw (48,45) -- (62,45) -- (55, 38) -- (48, 45);

\draw (8,45) -- (3, 50);
\draw (22,45) -- (27, 50);
\draw (15,38) -- (10, 33);
\draw (28,20) -- (23, 25);
\draw (42,20) -- (47, 25);
\draw (35,13) -- (35, 8);
\draw (48,45) -- (43, 50);
\draw (62,45) -- (67, 50);
\draw (55,38) -- (60, 33);

\draw (22,45) -- (42,20) -- (43,50) -- (22,45);

\node [v] at (8,45) {};
\node [v] at (3,50) {};
\node [v] at (22,45) {};
\node [v] at (27,50) {};
\node [v] at (15,38) {};
\node [v] at (10,33) {};
\node [v] at (28,20) {};
\node [v] at (23,25) {};
\node [v] at (42,20) {};
\node [v] at (47,25) {};
\node [v] at (35,13) {};
\node [v] at (35,8) {};

\node [v] at (48,45) {};
\node [v] at (43,50) {};
\node [v] at (62,45) {};
\node [v] at (67,50) {};
\node [v] at (55,38) {};
\node [v] at (60,33) {};

\node at (0,-1) {};

\end{tikzpicture}

\begin{tikzpicture}[scale=0.05]

\draw (8,45) -- (22,45) -- (15, 38) -- (8, 45);

\draw (28,20) -- (42,20) -- (35, 13) -- (28, 20);

\draw (48,45) -- (62,45) -- (55, 38) -- (48, 45);

\draw (8,45) -- (3, 50);
\draw (22,45) -- (27, 50);
\draw (15,38) -- (10, 33);
\draw (28,20) -- (23, 25);
\draw (42,20) -- (47, 25);
\draw (35,13) -- (35, 8);
\draw (48,45) -- (43, 50);
\draw (62,45) -- (67, 50);
\draw (55,38) -- (60, 33);

\draw (27,50) -- (47,25) -- (48,45) -- (27,50);

\node [v] at (8,45) {};
\node [v] at (3,50) {};
\node [v] at (22,45) {};
\node [v] at (27,50) {};
\node [v] at (15,38) {};
\node [v] at (10,33) {};
\node [v] at (28,20) {};
\node [v] at (23,25) {};
\node [v] at (42,20) {};
\node [v] at (47,25) {};
\node [v] at (35,13) {};
\node [v] at (35,8) {};

\node [v] at (48,45) {};
\node [v] at (43,50) {};
\node [v] at (62,45) {};
\node [v] at (67,50) {};
\node [v] at (55,38) {};
\node [v] at (60,33) {};

\end{tikzpicture}\quad
\begin{tikzpicture}[scale=0.05]

\draw (8,45) -- (22,45) -- (15, 38) -- (8, 45);

\draw (28,20) -- (42,20) -- (35, 13) -- (28, 20);

\draw (48,45) -- (62,45) -- (55, 38) -- (48, 45);

\draw (8,45) -- (3, 50);
\draw (22,45) -- (27, 50);
\draw (15,38) -- (10, 33);
\draw (28,20) -- (23, 25);
\draw (42,20) -- (47, 25);
\draw (35,13) -- (35, 8);
\draw (48,45) -- (43, 50);
\draw (62,45) -- (67, 50);
\draw (55,38) -- (60, 33);

\draw (27,50) -- (47,25) -- (43,50) -- (27,50);

\node [v] at (8,45) {};
\node [v] at (3,50) {};
\node [v] at (22,45) {};
\node [v] at (27,50) {};
\node [v] at (15,38) {};
\node [v] at (10,33) {};
\node [v] at (28,20) {};
\node [v] at (23,25) {};
\node [v] at (42,20) {};
\node [v] at (47,25) {};
\node [v] at (35,13) {};
\node [v] at (35,8) {};

\node [v] at (48,45) {};
\node [v] at (43,50) {};
\node [v] at (62,45) {};
\node [v] at (67,50) {};
\node [v] at (55,38) {};
\node [v] at (60,33) {};

\end{tikzpicture}

\caption{All non-isomorphic graphs in $\Delta_2$.}\label{fig:deltatwo}
\end{figure}

	All non-isomorphic graphs in $\Delta_2$ are depicted in Figure~\ref{fig:deltatwo}.
	Here is the main theorem of this section.

\begin{THM}\label{thm:main}
	Let $k$ be a non-negative integer. 
	Every graph in $\Delta_k$ is an excluded vertex-minor for graphs of linear rank-width at most $k$.
\end{THM}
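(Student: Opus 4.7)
My plan is induction on $k$. The base case $k=0$ is immediate, since $\Delta_0=\{K_2\}$: $\lrw(K_2)=1>0$, and the only elementary vertex-minor of $K_2$ is $K_1$, of linear rank-width $0$. For the inductive step, let $G\in\Delta_k$ be a delta composition of graphs $G_1,G_2,G_3\in\Delta_{k-1}$ with main triangle $v_1v_2v_3$, and assume the theorem for $k-1$. I need to establish (a) $\lrw(G)>k$ and (b) $\lrw(H)\le k$ for every elementary vertex-minor $H$ of $G$. The induction hypothesis gives $\lrw(G_i)>k-1$, and $\lrw(H')\le k-1$ for every proper vertex-minor $H'$ of $G_i$; combined with the general inequality $\lrw(G_i)\le\lrw(G_i\setminus v)+1$ (obtained by prepending $v$ to an optimal layout of $G_i\setminus v$), this pins down $\lrw(G_i)=k$.

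For the lower bound (a), I take an arbitrary linear layout $L$ of $G$ and restrict it to $V(G_j)$ to obtain a linear layout of $G_j$ for each $j\in\{1,2,3\}$. Because the only edges of $G$ between distinct $V(G_i)$'s are the main triangle edges, the block structure of the adjacency matrix yields $\cutrk_G(X)\ge\cutrk_{G_j}(X\cap V(G_j))$ for every $X\subseteq V(G)$. By induction, the restricted layout of $G_j$ admits some prefix $Y_j$ with $\cutrk_{G_j}(Y_j)\ge k$; lifting to the minimal prefix $X$ of $L$ containing $Y_j$ gives $\cutrk_G(X)\ge k$. The crucial step is to gain the extra $+1$ from the triangle: I select $j$ so that $v_j$ is the first vertex of the main triangle appearing in $L$ and choose $Y_j$ so that $v_j\in Y_j$; then the triangle edges from $v_j\in X$ to $v_{j'},v_{j''}\notin X$ contribute a row of the cut-rank submatrix whose support lies outside $V(G_j)$ and is therefore linearly independent from the rows coming from $V(G_j)\cap X$, boosting the rank by one.

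For the upper bound (b), since local equivalence preserves $\lrw$ by Proposition~\ref{prop:leq}, it suffices to bound $\lrw(\tilde G\setminus v)$ for every graph $\tilde G$ locally equivalent to $G$ and every $v\in V(\tilde G)$. I focus first on $G\setminus v$ and split on the location of $v$. When $v=v_i$ is a main triangle vertex, $G\setminus v_i$ decomposes as the disjoint union of $G_i\setminus v_i$ (of linear rank-width $\le k-1$ by induction) and the graph $H'$ obtained from $G_{i'}\sqcup G_{i''}$ by adding the edge $v_{i'}v_{i''}$; concatenating an optimal layout of $G_{i'}$ ending at $v_{i'}$ with an optimal layout of $G_{i''}$ starting at $v_{i''}$ yields a layout of $H'$ of width $\le k$ by a direct cut-rank calculation. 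When $v\in V(G_i)\setminus\{v_i\}$, the graph $G\setminus v$ is still a delta-composition-like structure with $G_i$ replaced by $G_i\setminus v$, whose linear rank-width is $\le k-1$; an appropriately ordered concatenation of layouts of the three branches, with $G_i\setminus v$ placed ``in the middle'' so its slack absorbs the triangle contribution, achieves width $\le k$. The other elementary vertex-minors are handled by analogous case analyses, observing that local complementations or pivots only alter adjacencies inside $N_G(v)$ and so change the three branches in a controlled way.

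The main obstacle will be the $+1$ increment in (a): verifying that the triangle-induced row is genuinely independent from the rows coming from $V(G_j)\cap X$ requires choosing $j$ and refining the inductive prefix $Y_j$ carefully (for instance, ensuring both $v_j\in Y_j$ and some $v_{j'}\notin X$), and then arguing via the zero-pattern of the adjacency submatrix that no nontrivial linear combination collapses the rank. The constructions in (b) are, by comparison, bookkeeping, once the right strengthened inductive statement --- that every $G_i\in\Delta_{k-1}$ admits a linear layout of width exactly $k$ in which $v_i$ appears at an arbitrary prescribed endpoint --- is in place.
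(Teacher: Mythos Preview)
There are two genuine gaps.

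For the lower bound (a), your specific strategy does not work as stated. You fix $j$ so that $v_j$ is the first main-triangle vertex in $L$ and then seek a prefix $Y_j$ of the restricted layout $L_j$ with $v_j\in Y_j$ and $\cutrk_{G_j}(Y_j)\ge k$. Such a $Y_j$ need not exist: if all of $V(G_1)$ precedes $V(G_2)\cup V(G_3)$ in $L$ and $v_1$ happens to be the last vertex of $L_1$, then the only prefix of $L_1$ containing $v_1$ is $V(G_1)$ itself, of cut-rank $0$. Even when a suitable $Y_j$ exists, the independence claim is delicate: the column of $v_{j'}$ in $A(G)[X,V(G)\setminus X]$ may have nonzero entries in rows from $X\cap V(G_{j'})$, so you cannot simply read off the extra rank from the zero pattern you describe. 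The paper sidesteps all of this by arguing by contradiction: assume a layout $L$ of width exactly $k$, let $a$ and $b$ be the \emph{first} and \emph{last} positions where the cut-rank hits $k$, choose (by pigeonhole) some $G_1$ with $a,b\notin V(G_1)$, and show the restricted layout $L_1$ has width at most $k-1$. The point is that for $a\le_L v\le_L b$ the connectivity of $G[V(G_2)\cup V(G_3)]$ forces the off-$V(G_1)$ block of the cut matrix to be nonzero, which peels one unit of rank off $\cutrk_G(S_v)$.

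For the upper bound (b), you correctly isolate the key strengthened statement---every $H\in\Delta_{k-1}$ admits a width-$k$ layout with a prescribed vertex at an endpoint---but you neither prove it nor indicate how, and it is not bookkeeping. The paper proves it via a separate inductive lemma (Lemma~\ref{lem:twin}): add a twin $w$ of $v$ to $H$ and construct a width-$k$ layout starting at $v$ and ending at $w$; this requires its own case analysis on whether $v$ is the triangle vertex $v_2$. Second, ``every $\tilde G$ locally equivalent to $G$'' is an unbounded family; to get finitely many cases you need Bouchet's lemma (Lemma~\ref{lem:bouchet}) that every elementary vertex-minor on $V(G)\setminus\{v\}$ is locally equivalent to one of $G\setminus v$, $G*v\setminus v$, $G\wedge vw\setminus v$. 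The latter two, when $v=v_1$ is a main-triangle vertex, are not analogous to $G\setminus v$: the triangle is destroyed and $v_2,v_3$ acquire the old $G_1$-neighbours of $v_1$, so one must invoke the twin lemma to manufacture a width-$k$ layout of the modified $G_1$-piece that starts at $v_2$ and ends at $v_3$.
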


\subsection{Linear rank-width of a graph in $\Delta_k$}

First, we prove that every graph in $\Delta_k$ has linear rank-width $k+1$.  

\begin{LEM}\label{lem:lowerbound}
  The linear rank-width of a graph in $\Delta_k$ is at least $k+1$.
\end{LEM}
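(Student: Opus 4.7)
I would prove the lemma by induction on $k$. The base case $k=0$ is immediate since $\Delta_0=\{K_2\}$ and $\lrw(K_2)=1$.

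For the inductive step, $G$ is the delta composition of $G_1,G_2,G_3\in\Delta_{k-1}$ with main triangle $v_1v_2v_3$ and $v_j\in V(G_j)$; fix an arbitrary linear layout $L$ of $G$. Two structural facts will drive the argument. First, for any prefix $X$ of $L$ and any $j$, the submatrix of the cut matrix $A(G)[X,V(G)\setminus X]$ on rows $X\cap V(G_j)$ and columns $V(G_j)\setminus X$ is exactly $A(G_j)[X\cap V(G_j),V(G_j)\setminus X]$, giving
\[\cutrk_G(X)\ge\cutrk_{G_j}(X\cap V(G_j)).\]
Hence the restriction $L_j$ of $L$ to $V(G_j)$ is a linear layout of $G_j$, and the inductive hypothesis applied to $L_j$ produces a prefix $Y_j$ of $L_j$ with $\cutrk_{G_j}(Y_j)\ge k$. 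Second, the cut matrix decomposes as $A(G)[X,V(G)\setminus X]=D+T'$, where $D$ is block-diagonal with diagonal blocks $A_j=A(G_j)[X\cap V(G_j),V(G_j)\setminus X]$ of ranks $a_j$, and $T'$ is the contribution of the triangle edges crossing the cut. A short check on the four possibilities for $\abs{\{v_1,v_2,v_3\}\cap X}$ shows that $T'$ always has rank at most $1$ (the crossing triangle edges share either a common endpoint in $X$ or a common endpoint outside $X$), so
\[a_1+a_2+a_3-1\ \le\ \cutrk_G(X)\ \le\ a_1+a_2+a_3+1.\]

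For the candidate prefix, let $\phi_j\in V(G_j)$ be the last vertex of $Y_j$ in $L$; since these three vertices are pairwise distinct, I may relabel $G_1,G_2,G_3$ so that $\phi_1<_L\phi_2<_L\phi_3$, and take $X^\star$ to be the prefix of $L$ ending at $\phi_2$. Then $X^\star\cap V(G_2)=Y_2$, so $a_2\ge k$. I would now split into cases based on $\abs{\{v_1,v_2,v_3\}\cap X^\star}$ and the precise locations of the triangle vertices relative to $X^\star$. In the easy sub-cases, connectedness of the $G_j$ forces $a_j\ge 1$ for some $j\neq 2$: for instance, whenever $v_3\in X^\star$ the set $X^\star\cap V(G_3)$ is non-empty (it contains $v_3$) and proper (it does not contain $\phi_3$, since $\phi_3>_L\phi_2$), so $a_3\ge 1$; combined with a similarly forced $a_1\ge 1$ this pushes $a_1+a_2+a_3\ge k+2$ and therefore $\cutrk_G(X^\star)\ge k+1$ directly from the inequality above. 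In the remaining sub-cases the column (or row) of the cut matrix indexed by the triangle vertex lying outside $X^\star$ carries triangle $1$-entries that a direct argument shows to be linearly independent of the columns (or rows) coming from the $A_2$-block, producing an additional independent column and so $\cutrk_G(X^\star)\ge k+1$.

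The main obstacle is this last step: in the harder sub-cases one must verify that the triangle entry genuinely extends the column span of the $A_2$-block rather than being absorbed by it. Concretely, these sub-cases split further on whether $v_2\in Y_2$ (so $v_2$ sits in the row index set of $A_2$) or $v_2\in V(G_2)\setminus Y_2$ (so $v_2$ sits in the column index set of $A_2$), and on whether the $v_2$-row or $v_2$-column of $A_2$ is linearly dependent on the remaining rows or columns. Working out a single-column/row argument in each configuration shows that the triangle always contributes the desired independent direction; the argument leverages the rank bound $a_2\ge k$ to leave enough room inside $A_2$, and uses connectedness of the $G_j$ to ensure that $v_1,v_2,v_3$ have nonzero neighborhoods inside their respective parts.
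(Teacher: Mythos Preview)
Your approach works and is genuinely different from the paper's. You argue directly: for any layout, locate a specific prefix $X^\star$ where the cut-rank must be at least $k+1$. The paper instead argues by contradiction: assuming a layout of width $k$ exists, it picks the first and last vertices $a,b$ where the cut-rank equals $k$, observes that at least one $G_i$ contains neither $a$ nor $b$, and then shows that the restriction of the layout to that $G_i$ has width at most $k-1$, contradicting induction. The choice of $a,b$ lets the paper avoid any matrix case analysis: between $a$ and $b$ the connected graph $G[V(G)\setminus V(G_i)]$ is split nontrivially and so contributes at least $1$ to the cut-rank, forcing $G_i$'s contribution to be at most $k-1$. Your route requires the case-split you describe, which does go through but can be tightened considerably. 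If you order the row- and column-blocks of the cut matrix according to which of $v_1,v_2,v_3$ lie in $X^\star$, the matrix is always block-\emph{triangular} (not merely block-diagonal plus a rank-$1$ perturbation), so in fact $\cutrk_G(X^\star)\ge a_1+a_2+a_3$ without the~$-1$. This immediately dispatches every sub-case with $a_1\ge1$ or $a_3\ge1$; only the single configuration $a_1=a_3=0$ (all of $V(G_1)$ inside $X^\star$, all of $V(G_3)$ outside) remains, and there the row of $v_1$ is zero except in column $v_3$, so one row operation gives rank $a_2+1$ with no need to analyse dependence of the $v_2$-row or $v_2$-column inside $A_2$.
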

\begin{proof}
	We use induction on $k$. 
	We may assume that $k\geq 1$. 
	Since $G\in \Delta_k$, $G$ is a delta composition of
        $G_1,G_2,G_3\in \Delta_{k-1}$
        with the main triangle 
	$v_1v_2v_3$ such that $v_i\in V(G_i)$ for $i=1$, $2$, $3$.

	Suppose that $G$ has linear rank-width at most $k$. 
        By the induction hypothesis, $G_1$ has linear rank-width at
        least $k$ and therefore $G$ has linear rank-width exactly $k$.
	Let $L$ be a linear layout of $G$ having width $k$. 
	For $v\in V(G)$, we define $S_v=\{x\in V(G):x\leq_L v\}$ and $T_v=V(G)\setminus S_v$. 
	Let $a$ and $b$ be the first and the last vertices in $L$ such that
	$\cutrk_G(S_a)=\cutrk_G(S_b)=k$. 
	Without loss of generality, we may assume that $\{a, b\}\subseteq V(G_2)\cup V(G_3)$. 
	Let $L_{1}$ be the subsequence of $L$ whose elements are the vertices of $G_1$.  

	For contradiction, we claim that $L_{1}$ is a linear layout of $G_1$ having width at most $k-1$. 
	Let $v\in V(G_1)$.
	It is sufficient to show that $\cutrk_{G_1}(S_v\cap V(G_1))\leq k-1$. 
	Note that $v\neq a$ and $v\neq b$. 
	If $v\le_L a$ or $v\ge_L b$, then 
        \[
        \cutrk_{G_1}(S_v\cap V(G_1))        \leq \cutrk_{G}(S_v)         \leq k-1.\]

	So we may assume that $a\le_L v\le_L b$. 
	Note that one of $S_v\cap V(G_1)$ and $T_v\cap V(G_1)$ does not have a neighbor in $G[V(G)\setminus V(G_1)]$ because $v_1$ is the unique vertex in $G_1$ which has a neighbor in $G[V(G)\setminus V(G_1)]$. 
	And since $G[V(G)\setminus V(G_1)]$ is connected and $a\in S_v\setminus V(G_1)$ and $b\in T_v\setminus V(G_1)$, 
	there is an edge $u_1u_2$ in $G[V(G)\setminus V(G_1)]$ such that $u_1\in S_v\setminus V(G_1)$ and $u_2\in T_v\setminus V(G_1)$. 
	So $A(G)[S_v\setminus V(G_1),T_v\setminus V(G_1)]$ is a non-zero matrix. 
	Depending on
 whether $v_1 \in S_v\cap V(G_1)$ or $v_1 \in T_v\cap V(G_1)$, 
\begin{multline*}
\cutrk_{G}(S_v)  \\
\,=\rank
\begin{pmatrix}
A(G)[S_v\cap V(G_1),T_v\cap V(G_1)]& 0 \\
A(G)[S_v\setminus V(G_1),T_v\cap V(G_1)]& A(G)[S_v\setminus V(G_1),T_v\setminus V(G_1)] \\  
\end{pmatrix} \\
\qquad\geq\rank\left(A(G)[S_v\cap V(G_1),T_v\cap V(G_1)]\right)+\rank\left(A(G)[S_v\setminus V(G_1),T_v\setminus V(G_1)]\right),
\end{multline*}
or  
\begin{multline*}
\cutrk_{G}(S_v) \\
\,=\rank
\begin{pmatrix}
A(G)[S_v\cap V(G_1),T_v\cap V(G_1)]& A(G)[S_v\cap V(G_1),T_v\setminus V(G_1)] \\
0& A(G)[S_v\setminus V(G_1),T_v\setminus V(G_1)] \\  
\end{pmatrix} \\
\qquad\geq\rank\left(A(G)[S_v\cap V(G_1),T_v\cap V(G_1)]\right)+\rank\left(A(G)[S_v\setminus V(G_1),T_v\setminus V(G_1)]\right), 
\end{multline*} respectively. Thus, we have  
\begin{align*}
\cutrk_{G_1}(S_v\cap V(G_1)	)
&=\rank\left(A(G)[S_v\cap V(G_1),T_v\cap V(G_1)]\right) \\
&\leq \cutrk_{G}(S_v)-\rank\left(A(G)[S_v\setminus V(G_1),T_v\setminus V(G_1)]\right) \\
&\leq \cutrk_{G}(S_v)-1 \leq k-1.
\end{align*} 
So $L_{1}$ is a linear layout of $G_1$ having width at most $k-1$, which is contradiction. Hence, $\lrw (G)\geq k+1$.
\end{proof}

   A vertex $w$ is called a \emph{twin} of another vertex $v$ in a graph
   if no vertex other than $v$ and $w$ is adjacent to exactly one of $v$ and $w$.

If $w$ is a twin of $v$ in a graph $G$
and 
$G\setminus w$ has linear rank-width $k+1$ with a linear layout
of width $k+1$ starting with $v$,
then clearly $G$ also admits a linear layout of width $k+1$ starting
with $v$
because we can easily put $w$ in the second place.
But the following lemma claims that we can place $w$ at
the end if $G\setminus w\in \Delta_{k}$.
This lemma implies that every graph in $\Delta_k$ has linear rank-width $k$.
Moreover, it will be mainly used in Section~\ref{sec:elementary}.

\begin{LEM}\label{lem:twin}
        Let $v$ be a vertex of a graph $G$
        and let $w$ be a twin of $v$.
	If $G\setminus w\in \Delta_k$,
        then 
	$G$ has a linear layout $L$  of width $k+1$ such that the first vertex of $L$ is $v$ and the last vertex of $L$ is $w$.  
\end{LEM}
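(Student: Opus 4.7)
The proof proceeds by induction on $k$. For $k=0$, since $G\setminus w\cong K_2$, the graph $G$ has exactly three vertices, and the layout $(v, y, w)$ with $y$ the other vertex of $G\setminus w$ has width $1$ by direct inspection of the two nontrivial cuts.

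For $k\geq 1$, write $G\setminus w$ as a delta composition of $G_1,G_2,G_3\in\Delta_{k-1}$ with main triangle $v_1v_2v_3$, $v_i\in V(G_i)$, and assume without loss of generality that $v\in V(G_1)$. The first key observation is that inside $G_1^+:=G[V(G_1)\cup\{w\}]$ the vertex $w$ is still a twin of $v$, since the neighbors of $v$ lying outside $V(G_1)$ (which are $v_2,v_3$ when $v=v_1$, and nothing when $v\neq v_1$) are exactly mirrored by those of $w$. Because $G_1^+\setminus w=G_1\in\Delta_{k-1}$, the inductive hypothesis yields a width-$k$ layout $L_1$ of $G_1^+$ beginning with $v$ and ending with $w$. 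To get matching layouts of $G_2$ and $G_3$, I apply the inductive hypothesis to each $G_i$ extended by a hypothetical twin of $v_i$ and then delete the hypothetical twin from the resulting layout; this produces layouts $\pi_2$ and $\pi_3$ of $G_2$ and $G_3$ of width at most $k$, starting with $v_2$ and $v_3$ respectively. The candidate layout is
\[
L = (v)\oplus\pi_2\oplus M_1\oplus\pi_3\oplus(w),
\]
where $M_1$ is $L_1$ with its first vertex $v$ and its last vertex $w$ stripped off.

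The cut-rank is then verified position by position. The singletons $\{v\}$ and $\{w\}$ have cut rank at most $1$. At a position inside $\pi_3$, the fact that $v_3$ has already entered the prefix forces every remaining $V(G_3)\setminus S_3$ column to be supported only on the $S_3$-rows, yielding $\rho_G(S)\leq\rho_{G_3}(S_3)+1\leq k+1$. At a position inside $M_1$, one compares directly with $\rho_{G_1^+}$ at the corresponding prefix of $L_1$: the two matrices differ only by the extra $v_2$- and $v_3$-columns, both of which equal $[v_1\in S]\cdot e_{v_1}$, so together they contribute at most one extra dimension, giving $\rho_G(S)\leq\rho_{G_1^+}(S)+1\leq k+1$.

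The hardest case, which I expect to be the main obstacle, is the check inside $\pi_2$, where the prefix is $\{v\}\cup S_2$ with $v_2\in S_2$; the naive column count gives $\rho_{G_2}(S_2)+2$, which is $k+2$ in the worst case. Saving a dimension rests on two structural ingredients. First, the $v_3$-column and the $w$-column have matching $S_2$-parts (both equal to $e_{v_2}$ when $v=v_1$, while the $w$-column vanishes on $S_2$-rows when $v\neq v_1$), so modulo the span of the $V(G_1)\setminus\{v\}$ columns the pair $\{v_3,w\}$ contributes at most one dimension. Second, because $\pi_2$ is inherited from a width-$k$ layout of a twin-extension of $G_2$, a case analysis is needed to show the compatibility that whenever $\rho_{G_2}(S_2)$ attains the maximum value $k$, the unit vector $e_{v_2}$ lies in the column span of $A(G_2)[S_2,V(G_2)\setminus S_2]$; this collapses the remaining potential extra dimension. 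Establishing this compatibility cleanly, and carrying it through the separate subcases $v=v_1$ versus $v\neq v_1$ together with the true-versus-false twin distinction, is the most delicate part of the proof.
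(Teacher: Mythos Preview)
Your overall architecture---induction on $k$, placing $v$ first and $w$ last, and sandwiching the rest of the ``home'' component between layouts of the two side components---is exactly the paper's. The crucial divergence is the orientation of $\pi_2$: you have it \emph{start} at $v_2$, whereas the paper has the corresponding side layout \emph{end} at the attachment vertex. With the paper's orientation the verification is immediate: at a prefix $\{v\}\cup X$ with $X\subseteq V(G_2)\setminus\{v_2\}$, no vertex of $X$ has a neighbor outside $V(G_2)$, so $\cutrk_G(\{v\}\cup X)\le 1+\cutrk_{G_2}(X)\le k+1$ by submodularity, and no ``compatibility'' claim is needed at all.

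With your orientation, $v_2$ lies in every nontrivial prefix $S_2$, its row picks up the external columns $v_1,v_3$ (and possibly $w$), and you are forced to argue that $e_{v_2}$ lies in the column span of $A(G_2)[S_2,V(G_2)\setminus S_2]$ whenever $\cutrk_{G_2}(S_2)=k$. This does \emph{not} follow from the twin-extension bound, and it is false for layouts your construction permits. Take $k=2$, let $G_2$ be the net graph (triangle $abc$ with pendants $a',b',c'$), and set $v_2=a'$. Adding a \emph{false} twin $w_2$ of $a'$ gives the width-$2$ layout $(a',a,b,b',c,c',w_2)$, so $\pi_2=(a',a,b,b',c,c')$ is a legitimate output of your procedure. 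At $S_2=\{a',a,b\}$ we have $\cutrk_{G_2}(S_2)=2=k$, but the columns of the cut matrix (over coordinates $(a',a,b)$) are $(0,0,1),(0,1,1),(0,0,0)$, whose span misses $e_{a'}=(1,0,0)$. Plugging this $\pi_2$ into your global layout with $v\in V(G_1)\setminus\{v_1\}$ a pendant whose unique neighbor is not $v_1$ yields four rows $e_u,\;e_{v_1}+e_{v_3},\;e_c,\;e_{b'}+e_c$ with pairwise disjoint supports on the first two, giving rank $4>k+1$. So the proposed layout does not have width $k+1$, and the ``case analysis'' you defer cannot succeed in general. The repair is simply to reverse $\pi_2$ so that it ends at $v_2$; this is exactly the paper's construction, and it eliminates the difficulty entirely.
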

Before proving the lemma, we first show that Lemma~\ref{lem:twin} 
implies the following proposition determining the exact linear
rank-width of a graph in $\Delta_k$.

\begin{PROP}\label{prop:lrwplus}
Every graph in $\Delta_k$ has linear
rank-width $k+1$.
Moreover, for every vertex $v$ of $G\in \Delta_k$,  there
exists a linear layout of $G$ having width $k+1$ whose first vertex is $v$.
\end{PROP}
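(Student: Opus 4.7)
The plan is to combine the lower bound supplied by Lemma~\ref{lem:lowerbound} with a direct application of Lemma~\ref{lem:twin}. Since Lemma~\ref{lem:lowerbound} already gives $\lrw(G)\geq k+1$ for every $G\in\Delta_k$, it is enough to exhibit, for each vertex $v\in V(G)$, a linear layout of $G$ of width at most $k+1$ whose first vertex is $v$. The equality $\lrw(G)=k+1$ and the ``moreover'' clause then both follow at once, because any layout of width at most $k+1$ must in fact have width exactly $k+1$.

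Given $v\in V(G)$, I would apply Lemma~\ref{lem:twin} to a one-vertex extension of $G$. Let $H$ be the graph obtained from $G$ by adding a single new vertex $w$ with $N_H(w)=N_G(v)$. Then for every $u\in V(G)\setminus\{v\}$ one has $u\sim_H v$ iff $u\sim_H w$, so $w$ is a twin of $v$ in $H$, and clearly $H\setminus w=G\in\Delta_k$. Hence Lemma~\ref{lem:twin} produces a linear layout $L$ of $H$ of width $k+1$ whose first vertex is $v$ and whose last vertex is $w$.

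To recover a layout of $G$ itself, I would let $L'$ be the sequence obtained from $L$ by deleting the final entry $w$. Then $L'$ still starts with $v$, and every prefix of $L'$ is a prefix $X$ of $L$ with $w\notin X$. The cut matrix $A(G)[X,V(G)\setminus X]$ is obtained from $A(H)[X,V(H)\setminus X]$ by deleting the single column indexed by $w$, so
\[
\cutrk_G(X)\;\leq\;\cutrk_H(X)\;\leq\;k+1.
\]
Thus $L'$ is a linear layout of $G$ of width at most $k+1$ beginning at $v$, which together with the lower bound completes the proof.

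There is essentially no obstacle in this reduction itself: all the content of the proposition is packaged inside Lemma~\ref{lem:twin}, whose nontrivial assertion is precisely that the twin $w$ can be pushed all the way to the end of a width-$(k+1)$ layout while $v$ is kept at the front. I expect the only genuine work to lie in proving that lemma; the proposition is then obtained in a couple of lines by the adding-then-deleting-a-twin trick above.
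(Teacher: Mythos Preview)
Your proposal is correct and follows exactly the same route as the paper: add a twin $w$ of $v$, invoke Lemma~\ref{lem:twin} to get a width-$(k{+}1)$ layout of the extended graph starting at $v$ and ending at $w$, and then delete $w$ to obtain the desired layout of $G$. You have simply spelled out the ``discard $w$'' step in slightly more detail than the paper does.
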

\begin{proof}
  By Lemma~\ref{lem:lowerbound}, the linear rank-width of a graph $G$
  in $\Delta_k$ is at least $k+1$.
  Let $v\in V(G)$ and let $G'$ be a graph obtained by adding a twin $w$ of $v$ to $G$.
  Then Lemma~\ref{lem:twin} implies that $G'$ has a linear layout $L$
  of width $k+1$
  starting at $v$ and ending at $w$. 
  We discard $w$ from $L$ to obtain a linear layout of $G$
  starting with $v$ having width $k+1$.
\end{proof}
\begin{proof}[Proof of Lemma~\ref{lem:twin}]
	We prove by induction on $k$. 
        If $k=0$, then $G$ is a connected graph on three vertices
	and therefore every linear layout of $G$ has width $1$. 
	Thus we may assume that $k\geq 1$.
        Let $G\setminus w$ be a delta composition of $G_1,G_2,G_3\in
        \Delta_{k-1}$
        with the main triangle $v_1v_2v_3$ such that $v_i\in V(G_i)$ for $i=1$, $2$, $3$.
	We may assume that $v\in V(G_2)$. 
	
        We first claim that $G_1$ has a linear layout $L_1$ of width $k$ ending at $v_1$,
        and $G_3$ has a linear layout $L_3$ of width $k$ starting at $v_3$.
        For $i\in \{1,3\}$, let $G'_i$ is a graph obtained from $G_i$ by adding a twin $w_i$ of $v_i$.
        Since $G'_i\setminus w_i\in \Delta_{k-1}$, 
        by the induction hypothesis,
        $G'_i$ has a linear layout $L'_i$ of width $k$ starting at $w_i$ and ending at $v_i$,
        and by discarding $w_i$ from each $L'_i$, 
        we obtain a linear layout $L''_i$ of $G_i$ ending at $v_i$.
        So, $L_1=L''_1$ and the reverse layout $L_3$ of $L''_3$ are the linear layouts of $G_1$ and $G_3$ having width $k$, respectively, such that
        the last vertex of $L_1$ is $v_1$ and the first vertex of $L_3$ is $v_3$.

        Let 
        \[
        H=
        \begin{cases}
          G\setminus (V(G_1)\cup V(G_3)) 
          &\text{if }v\neq v_2,\\
          G\setminus (V(G_1)\cup V(G_3))\setminus vw &\text{if $v=v_2$, and $v$, $w$
        are adjacent in $G$,}\\
      G\setminus (V(G_1)\cup V(G_3))+vw &\text{otherwise.}
        \end{cases}
        \]
        By the induction hypothesis, 
        $H$ has a linear layout $(v)\oplus L_H\oplus (w)$ of width $k$.

        \smallskip\noindent (1) Clearly,
        $\cutrk_{G}(V(G_1)\cup\{v\})\le 2\le k+1$
        and 
        $\cutrk_{G}(V(G_3)\cup\{w\})\le 2\le k+1$.
	
        \smallskip\noindent (2) We claim that for $X\subseteq V(G_1)\setminus\{v_1\}$, if $\cutrk_{G_1}(X)\le
        k$, then $\cutrk_G(X\cup\{v\})\le k+1$.         
        This is because no vertex in $X$ has a neighbor in
        $V(G)\setminus V(G_1)$
        and therefore $\cutrk_{G_1}(X)=\cutrk_G(X)\ge
        \cutrk_G(X\cup\{v\})-1$ by the submodular inequality.

        \smallskip\noindent (3) 
        Similar to (2), we deduce that for $X\subseteq V(G_3)\setminus\{v_3\}$, if $\cutrk_{G_3}(X)\le
        k$, then $\cutrk_G(X\cup\{v\})\le k+1$. 
        
        \smallskip\noindent (4) 
        We claim that if $v\neq v_2$, $X\subseteq V(H)$, and    $\cutrk_{H}(X)\le k$, 
        then $\cutrk_G(V(G_1)\cup X)\le k+1$.
        By symmetry between $G_1$ and $G_3$, we may assume that
        $v_2\notin X$.
        By the submodular inequality, 
        $\cutrk_G(V(G_1)\cup X)\le
        \cutrk_G(X)+\cutrk_G(V(G_1))=\cutrk_H(X)+1\le k+1$.
        
        \smallskip\noindent (5) 
        We claim that if $v=v_2$, $v\in X\subseteq
        V(H) $, $w\notin X$, and         $\cutrk_{H}(X)\le k$, 
        then $\cutrk_G(V(G_1)\cup X)\le k+1$.
        By adding the row of $v_1$ to that of $v_2$ in $A(G)[X\cup
        V(G_1),(V(H)\setminus X)\cup V(G_3)]$, 
        we see that 
        $\cutrk_{G}(X\cup V(G_1))
        \le \cutrk_H(X)+1\le k+1$.

        By combining (1), (2), (3), (4), and (5),  we conclude that 
        $(v)\oplus L_1\oplus L_H\oplus L_3\oplus (w)$ is  a linear
        layout of $G$ having width at most $k+1$.
        Clearly it has width  $k+1$ because
        $G\setminus w$ has linear rank-width $k+1$ by
        Lemma~\ref{lem:lowerbound}.
\end{proof}

\subsection{Combining graphs in $\Delta_k$}
The following two lemmas will help us to prove that elementary
vertex-minors of  graphs in $\Delta_k$ have linear rank-width at most $k$.
\begin{LEM}\label{lem:join}
  Let $k$ be a positive integer 
  and let $G_1, G_2\in \Delta_{k-1}$.
  Let $G$ be a graph obtained from the disjoint union of $G_1$ and
  $G_2$ by adding an edge $w_1w_2$ for fixed $w_1\in V(G_1)$ and $w_2\in
  V(G_2)$.
  Then $G$ has linear rank-width $k$. 
\end{LEM}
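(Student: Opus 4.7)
The plan is to prove $\lrw(G)\ge k$ by exhibiting $G_1$ as a vertex-minor of $G$, and to prove $\lrw(G)\le k$ by constructing an explicit linear layout via concatenation. For the lower bound, observe that the only edge of $G$ between $V(G_1)$ and $V(G_2)$ is $w_1w_2$, so $G[V(G_1)]=G_1$. Hence $G_1$ is obtained from $G$ by deleting the vertices in $V(G_2)$, so $G_1$ is a vertex-minor of $G$, and by Proposition~\ref{prop:lrwplus} applied to $G_1\in\Delta_{k-1}$, we obtain $\lrw(G)\ge\lrw(G_1)=k$.

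For the upper bound, I would apply Proposition~\ref{prop:lrwplus} to obtain a linear layout of $G_1$ of width $k$ starting at $w_1$, and then reverse it to get a layout $L_1$ of $G_1$ of width $k$ ending at $w_1$ (reversing preserves the width since $\cutrk_H(X)=\cutrk_H(V(H)\setminus X)$ for any graph $H$ and any $X\subseteq V(H)$). Similarly I would obtain a layout $L_2$ of $G_2$ of width $k$ starting at $w_2$, and set $L=L_1\oplus L_2$. To verify $\lrw_L(G)\le k$, for each $v\in V(G)$ write $S_v=\{x:x\le_L v\}$ and $T_v=V(G)\setminus S_v$, and split into three cases. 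If $v\in V(G_1)\setminus\{w_1\}$, then $S_v\subseteq V(G_1)\setminus\{w_1\}$, so $A(G)[S_v,T_v]$ is zero on all $V(G_2)$-columns, yielding $\cutrk_G(S_v)=\cutrk_{G_1}(S_v)\le k$. If $v=w_1$, then $S_v=V(G_1)$ and $A(G)[V(G_1),V(G_2)]$ has exactly one nonzero entry, namely $(w_1,w_2)$, so $\cutrk_G(S_v)=1\le k$. If $v\in V(G_2)$, then $w_2\in S_v$ and $T_v\subseteq V(G_2)\setminus\{w_2\}$, so symmetrically to the first case $\cutrk_G(S_v)=\cutrk_G(T_v)=\cutrk_{G_2}(T_v)\le k$.

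I do not anticipate a substantive obstacle in this lemma. The key structural observation is that by forcing $w_1$ to be the last vertex of $L_1$ and $w_2$ to be the first vertex of $L_2$, the unique cross-component edge $w_1w_2$ contributes nothing to any prefix cut except at the single interface prefix $S_{w_1}=V(G_1)$, where it contributes rank exactly $1$. The flexibility of starting a width-$k$ layout at an arbitrary prescribed vertex, furnished by Proposition~\ref{prop:lrwplus}, is precisely what makes this clean decomposition available.
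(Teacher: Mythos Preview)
Your proof is correct and follows essentially the same approach as the paper: both obtain the lower bound from $G_1$ being an induced subgraph (hence vertex-minor) of $G$ together with Proposition~\ref{prop:lrwplus}, and both obtain the upper bound by concatenating a width-$k$ layout of $G_1$ ending at $w_1$ with a width-$k$ layout of $G_2$ starting at $w_2$. The paper simply asserts that $L_1\oplus L_2$ ``obviously'' has width at most $k$, whereas you spell out the case analysis and make the reversal step explicit; both are fine.
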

\begin{proof}
  It is trivial that the linear rank-width of $G$ is at least $k$
  because  an induced subgraph $G_1$ of $G$
  has linear rank-width $k$ by Proposition~\ref{prop:lrwplus}.
  By Proposition \ref{prop:lrwplus}, 
  there is a linear layout $L_1$ of $G_1$ having width $k$ such that 
  the last vertex of $L_1$ is $w_1$, 
  and there is a linear layout $L_2$ of $G_2$ having width $k$ such that the first vertex of $L_2$ is $w_2$.
  Then obviously $L_1\oplus L_2$ is a linear layout of $G$ having
  width at most $k$. 
\end{proof}

\begin{LEM}\label{lem:twographs}
	Let $k$ be a positive integer. 
	Let $G_1$, $G_2\in \Delta_{k-1}$, and let $G_3$ be a graph having linear rank-width at most $k-1$. 
	Then every delta composition of $G_1$, $G_2$ and $G_3$ has linear rank-width $k$. 
\end{LEM}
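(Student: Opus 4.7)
The plan is to sandwich $\lrw(G)$ between $k$ and $k$, where $G$ is a delta composition of $G_1,G_2,G_3$ with main triangle $v_1v_2v_3$ and $v_i\in V(G_i)$. For the lower bound, $G_1$ is an induced subgraph of $G$ (obtained by deleting $V(G_2)\cup V(G_3)$) and hence a vertex-minor of $G$; since $G_1\in\Delta_{k-1}$, Proposition~\ref{prop:lrwplus} gives $\lrw(G_1)=k$, so $\lrw(G)\ge k$.

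For the upper bound, I will concatenate three carefully chosen layouts. Using Proposition~\ref{prop:lrwplus} together with the symmetry $\cutrk_G(X)=\cutrk_G(V(G)\setminus X)$ of cut-rank (which lets us reverse a layout without changing its width), pick a layout $L_1$ of $G_1$ of width $k$ ending at $v_1$ and a layout $L_2$ of $G_2$ of width $k$ starting at $v_2$. Since $\lrw(G_3)\le k-1$, pick any layout $L_3$ of $G_3$ of width at most $k-1$. Set $L=L_1\oplus L_3\oplus L_2$. To verify $\lrw_L(G)\le k$, I fix $v\in V(G)$ and bound $\cutrk_G(S_v)$, exploiting that the only edges between distinct $V(G_i)$'s are the three triangle edges. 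If $v\in V(G_1)$ strictly precedes $v_1$, then $v_1\in T_v$ and only edges of $G_1$ cross the cut, giving $\cutrk_G(S_v)=\cutrk_{G_1}(S_v\cap V(G_1))\le k$. At $v=v_1$, only the row of $v_1$ in $A(G)[V(G_1),T_v]$ is nonzero, so $\cutrk_G(V(G_1))=1$. If $v\in V(G_2)$, then $v_2\in S_v$ absorbs the edges $v_1v_2$ and $v_3v_2$, so all rows outside $V(G_2)$ vanish and $\cutrk_G(S_v)=\cutrk_{G_2}(S_v\cap V(G_2))\le k$.

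The remaining and crucial case is $v\in V(G_3)$. Here $A(G)[S_v,T_v]$ consists of an $A(G_3)[S_v\cap V(G_3),V(G_3)\setminus S_v]$ block of rank at most $k-1$, augmented by one extra row at $v_1$ (with $1$'s in columns $v_2$ and, if $v_3\notin S_v$, $v_3$) and one extra column at $v_2$ (with $1$'s in rows $v_1$ and, if $v_3\in S_v$, $v_3$). A naive count gives an overhead of $+2$, which is too lossy. I will show, by splitting on whether $v_3\in S_v$ or not, that a single linear operation involving the $v_3$-row or $v_3$-column of $A(G_3)$ lets the extras absorb each other, contributing only $+1$ in total, so that $\cutrk_G(S_v)\le(k-1)+1=k$. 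This $\mathbb{F}_2$-rank cancellation—exploiting the precise incidence pattern of the triangle edges—is the only nontrivial step; combining the four cases yields $\lrw(G)\le k$, which together with the lower bound completes the proof.
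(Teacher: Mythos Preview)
Your proof is correct and follows essentially the same approach as the paper: the same lower bound via $G_1$ being an induced subgraph, the same layout $L_1\oplus L_3\oplus L_2$, and the same ``$+1$ overhead'' argument for cuts landing inside $V(G_3)$. The only difference is that where you split on whether $v_3\in S_v$ and perform an explicit $\mathbb{F}_2$ row/column operation, the paper observes that by the symmetry $\cutrk_G(S_v)=\cutrk_G(T_v)$ (swapping the roles of $G_1$ and $G_2$) one may assume $v_3\notin S_v$, after which the bound $\cutrk_G(S_v)\le \rank(A(G)[V(G_1),T_v])+\rank(A(G)[S_v\cap V(G_3),T_v])=1+\cutrk_{G_3}(S_v\cap V(G_3))\le k$ is immediate without any row manipulation.
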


\begin{proof}
	Let $G$ be a delta composition of $G_1$, $G_2$ and $G_3$
	with the main triangle $v_1v_2v_3$ such that $v_i\in V(G_i)$ for $i=1$, $2$, $3$. 
        Clearly the linear rank-width of $G$ is at least~$k$ because
        an induced subgraph $G_1$ of $G$ has linear rank-width
        $k$ by Proposition~\ref{prop:lrwplus}.

	Since $G_1$, $G_2\in \Delta_{k-1}$, by Proposition~\ref{prop:lrwplus}, there is a linear layout $L_1$ of $G_1$ having width $k$ such that 
	the last vertex of $L_1$ is $v_1$, 
	and there is a linear layout $L_2$ of $G_2$ having width $k$ such that the first vertex of $L_2$ is $v_2$. Let $L_3$ be a linear layout of $G_3$ having width at most $k-1$.

	We claim that $L=L_1\oplus L_3\oplus L_2$ is a linear layout of $G$ having width at most $k$. 
	Let $v\in V(G)$, $S_v=\{x:x\leq_L v\}$, and $T_v=V(G)\setminus S_v$. 
	We need to show that $\cutrk_G(S_v)\leq k$ for all $v\in V(G)$. 
	This is clearly true if $v\in V(G_1)\cup V(G_2)$. So let us
        assume that $v\in V(G_3)$.
        By symmetry we may assume $v_3\notin S_v$, because 
        we can swap $G_1$ and $G_2$.
        Then no vertex of $G_2$ has a neighbor in $S_v\cap V(G_3)$ and therefore
\begin{align*}
\cutrk_{G}(S_v)&\le 
\rank(A(G)[V(G_1),T_v])
+\rank(A(G)[S_v\cap V(G_3),T_v])\\
&= 1+ \cutrk_{G_3} (S_v\cap V(G_3))\le k.
\end{align*}
	Therefore, $G$ has linear rank-width at most $k$. 
   \end{proof}

\subsection{Linear rank-width of elementary vertex-minors of a graph in $\Delta_k$} \label{sec:elementary}

We will prove that every elementary vertex-minor of $G$ in $\Delta_k$
has linear rank-width at most $k$. To prove it, we will use the following lemmas.

\begin{LEM}[Bouchet \cite{Bouchet882}]\label{lem:bouchet}
Let $G$ be a graph, $v$ be a vertex of $G$ and $w$ be an arbitrary neighbor of $v$. Then every elementary vertex-minor obtained from $G$ by deleting $v$ is locally equivalent to either $G\setminus v$, $G* v\setminus v$, or $G\pivot vw\setminus v$.
\end{LEM}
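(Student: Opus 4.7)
The plan is to argue by induction on the length $\ell$ of a sequence of local complementations which, followed by deleting $v$, produces the elementary vertex-minor $H$. The base case $\ell=0$ gives $H=G\setminus v$, which is already one of the three listed graphs. For the inductive step, I would reduce an arbitrary such sequence to a short ``normal form'' at $v$, yielding one of the three options.

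The first ingredient is a commutation between local complementation at a vertex $u\neq v$ and the deletion of $v$. Since $*u$ complements exactly the edges inside $N_G(u)$, and edges incident to $v$ disappear after $\setminus v$, the remaining toggled edges lie inside $N_G(u)\setminus\{v\}=N_{G\setminus v}(u)$. Thus $G*u\setminus v=(G\setminus v)*u$, so local complementations at vertices other than $v$ preserve the local equivalence class of the deleted graph. This reduces the task to tracking only the occurrences of $*v$ in the sequence.

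The second ingredient is the group-theoretic calculus of local complementations: the involution $(*v)(*v)=\operatorname{id}$, the commutation $(*u)(*v)=(*v)(*u)$ when $u\notin N(v)$ in the current graph, and the pivot identity $(*u)(*v)(*u)=(*v)(*u)(*v)$ when $u\in N(v)$. Using these, I would push all occurrences of $*v$ to the right-hand end of the sequence, exchanging each pair $(*v)(*u)$ with $u$ adjacent to $v$ for an equivalent word ending in $*u$. After simplification, the resulting tail of $v$-operations (just before $\setminus v$) is one of: empty, a single $*v$, or $*v*w*v=\pivot vw$ for some neighbor $w$ of $v$ in the current intermediate graph. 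Combined with the first ingredient, applying $\setminus v$ yields a graph locally equivalent to $G\setminus v$, $G*v\setminus v$, or $G\pivot vw\setminus v$ respectively.

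The hardest part will be the bookkeeping in the normalization: when moving $*v$ past a $*u$ with $u$ adjacent to $v$, the pivot identity must be applied with respect to the current graph, and subsequent neighborhoods of $v$ change accordingly, so one must induct carefully on the number of $*v$'s rather than on raw length. A related subtle point is that the statement fixes $w$ as an arbitrary neighbor of $v$ in $G$, not in some intermediate graph; so I would also need to verify that $G\pivot vw\setminus v$ is locally equivalent to $G\pivot vw'\setminus v$ for any two neighbors $w,w'$ of $v$. This would follow by showing that the two pivots differ, within the orbit of $G$, by a sequence of local complementations at vertices other than $v$, and then invoking the first ingredient.
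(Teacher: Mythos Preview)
The paper does not prove this lemma; it quotes it from Bouchet and uses it as a black box. So there is nothing in the paper to compare against, and I assess your outline on its own.

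Your ingredients are all correct: the commutation $(G*u)\setminus v=(G\setminus v)*u$ for $u\neq v$, the involution $*v*v=\mathrm{id}$, the commutation $*u*v=*v*u$ for non-adjacent $u,v$, the braid identity $*u*v*u=*v*u*v$ for adjacent $u,v$, and the observation (via Lemma~\ref{lem:subsequentpivot}) that $G\pivot vw\setminus v$ and $G\pivot vw'\setminus v$ are locally equivalent for any two neighbors $w,w'$ of $v$. The gap is in how you combine them. Suppose your normalization succeeds and the word becomes $\tau\rho$ with $\tau$ free of $*v$ and $\rho\in\{\epsilon,*v,\pivot vw\}$. Then $G'\setminus v=(G\tau)\rho\setminus v$ is one of the three options \emph{for the graph $G\tau$}, not for $G$. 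Your first ingredient strips a $*u$ only when it sits immediately before $\setminus v$; here the $\tau$-operations sit before $\rho$, and they may change $N(v)$, so commutation alone does not give $(G\tau)*v\setminus v\sim G*v\setminus v$ or $(G\tau)\pivot vw\setminus v\sim G\pivot vw\setminus v$. If instead you push the $*v$'s to the \emph{left}, commutation does finish the job, but then establishing the normal form $\rho\tau$ is essentially the statement you want. Either way the rewriting is not obviously terminating: replacing $*u*v$ by $*v*u*v*u$ when $u\sim v$ moves one $*v$ but creates another.

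The clean fix is to strengthen the induction hypothesis: prove that for every $G'$ locally equivalent to $G$, the whole set $\{[G'\setminus v],[G'*v\setminus v],[G'\pivot vw'\setminus v]\}$ coincides with $\{[G\setminus v],[G*v\setminus v],[G\pivot vw\setminus v]\}$. This reduces to the one-step claim that the set is unchanged under $G\mapsto G*u$, which is a short case analysis on whether $u=v$, $u\in N_G(v)$, or $u\notin N_G(v)\cup\{v\}$, using exactly your identities together with Lemma~\ref{lem:subsequentpivot}; no word-rewriting is needed.
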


\begin{LEM}[Bouchet {\cite[(8.2)]{Bouchet882}}; see Oum \cite{Oum05}]\label{lem:subsequentpivot}
Let $G$ be a graph and $vv_1, vv_2\in E(G)$. Then $v_1v_2\in E(G\pivot vv_1)$ and $G\pivot vv_1\pivot v_1v_2=G\pivot vv_2$. 
\end{LEM}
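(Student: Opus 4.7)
The plan is to rely on the combinatorial description of pivoting recalled in the paper just after the definition: for an edge $xy\in E(G)$, partition $(N_G(x)\cup N_G(y))\setminus\{x,y\}$ into $V_1=N_G(x)\cap N_G(y)$, $V_2=N_G(x)\setminus N_G(y)\setminus\{y\}$, and $V_3=N_G(y)\setminus N_G(x)\setminus\{x\}$; then $G\pivot xy$ is obtained from $G$ by complementing adjacencies between distinct parts $V_i,V_j$ and swapping the labels of $x$ and $y$. In particular, the neighborhood of $y$ in $G\pivot xy$ is $V_1\cup V_2\cup\{x\}=(N_G(x)\setminus\{y\})\cup\{x\}$, and symmetrically for $x$ in $G\pivot xy$.

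For the first assertion, since $vv_2\in E(G)$ and $v_2\neq v_1$, we have $v_2\in N_G(v)\setminus\{v_1\}=V_1\cup V_2$, where $V_1,V_2,V_3$ denote the parts for the pivot of $vv_1$. By the description above, $v_2$ lies in the new neighborhood of $v_1$ in $G\pivot vv_1$, so $v_1v_2\in E(G\pivot vv_1)$.

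For the second assertion, my plan is to verify $G\pivot vv_1\pivot v_1v_2=G\pivot vv_2$ by comparing adjacencies pair by pair. For each pair $\{a,b\}\subseteq V(G)$, the characterization above gives an explicit rule for whether $ab\in E(G\pivot xy)$ in terms of $ab\in E(G)$ and the adjacencies of $a,b$ to $x$ and $y$ in $G$. I would apply this rule twice on the left side (first for $xy=vv_1$, then for $xy=v_1v_2$ in the pivoted graph, using the updated neighborhoods of $v_1$ and $v_2$ just described) and once on the right (for $xy=vv_2$), and use $vv_1,vv_2\in E(G)$ together with some elementary bookkeeping to check that the resulting adjacency of $ab$ on each side is the same function of the $G$-adjacencies among $\{a,b,v,v_1,v_2\}$. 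An alternative algebraic route is to expand $G\pivot vv_1\pivot v_1v_2=G*v*v_1*v*v_1*v_2*v_1$, apply the standard identity $G*x*y*x=G*y*x*y$ (valid when $xy\in E(G)$) to the first three factors, cancel the adjacent $*v_1*v_1$, and thus reduce to $G*v_1*v*v_2*v_1$, which can then be matched with $G*v*v_2*v=G\pivot vv_2$ by the same kind of rewriting.

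The main obstacle is the case analysis for the second pivot: in $G\pivot vv_1$, the neighborhood of $v_2$ differs from $N_G(v_2)$ in a way that depends on whether $v_1v_2\in E(G)$, which forces a split into two sub-cases; within each sub-case one must also separately handle the pairs $\{a,b\}\subseteq V(G)\setminus\{v,v_1,v_2\}$ and the pairs meeting $\{v,v_1,v_2\}$. The computation is mechanical but tedious, and organizing it by the type of $\{a,b\}$ (both outside, one inside and one outside, or both inside $\{v,v_1,v_2\}$) together with the dichotomy on $v_1v_2\in E(G)$ keeps the verification tractable.
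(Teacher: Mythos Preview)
The paper does not prove this lemma at all; it is quoted from Bouchet and Oum with citations and used as a black box. So there is no ``paper's proof'' to compare against, and your write-up would in fact supply something the paper omits.

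Your argument for the first assertion is correct: after pivoting $vv_1$ the vertex now labelled $v_1$ has neighbourhood $(N_G(v)\setminus\{v_1\})\cup\{v\}$, which contains $v_2$.

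Your primary plan for the second assertion---a direct pair-by-pair comparison of adjacencies using the combinatorial description of pivoting, split according to whether $v_1v_2\in E(G)$ and according to how $\{a,b\}$ meets $\{v,v_1,v_2\}$---is sound and will go through; it is exactly the kind of verification Oum~\cite{Oum05} carries out.

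Your \emph{alternative} algebraic route, however, has a genuine gap. Reducing $G\pivot vv_1\pivot v_1v_2$ to $G*v_1*v*v_2*v_1$ via the braid relation $*x*y*x=*y*x*y$ is fine, but the sentence ``which can then be matched with $G*v*v_2*v$ by the same kind of rewriting'' is not justified. To move the outer $*v_1$'s past the inner $*v*v_2$ you would need either $v_1v_2\notin E$ in the relevant intermediate graph (to commute) or $v_1v_2\in E$ (to braid), and one can check that $v_1v_2$ is \emph{always} an edge of $G*v_1*v$, so commuting is blocked, while braiding just reproduces the expression you started from. In short, the two relations $*x*x=\mathrm{id}$ and $*x*y*x=*y*x*y$ alone do not obviously reduce $*v_1*v*v_2*v_1$ to $*v*v_2*v$ without already invoking the identity you are trying to prove (or falling back on the adjacency computation). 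If you want to keep an algebraic flavour, you will still need the case split on $v_1v_2\in E(G)$ and some direct neighbourhood bookkeeping---at which point you are essentially doing your primary approach anyway. I would drop the algebraic aside or replace it with a remark that the identity also follows from Bouchet's isotropic-system formalism.
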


	By Lemma~\ref{lem:bouchet}, 
	it is sufficient to prove that $G\setminus v$, $G* v\setminus v$, and $G\pivot vw\setminus v$ 
	has linear rank-width one less than the linear rank-width of $G$.

\begin{LEM}\label{lem:elementarydel}
	Let $k$ be a non-negative integer and $G\in \Delta_k$. 
        Then $G\setminus v$ has linear rank-width at most $k$ for each
        vertex $v$.
\end{LEM}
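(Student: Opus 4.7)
The plan is to induct on $k$. The base case $k=0$ is immediate: the unique graph in $\Delta_0$ is $K_2$, so $G\setminus v$ is a single vertex and has linear rank-width $0\le 0$.

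For the inductive step, assume $k\ge 1$ and write $G$ as a delta composition of $G_1,G_2,G_3\in\Delta_{k-1}$ with main triangle $v_1v_2v_3$ where $v_i\in V(G_i)$. I split into two cases depending on whether $v$ lies on the main triangle.

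Case 1: $v\in\{v_1,v_2,v_3\}$. By symmetry, assume $v=v_1$. Since the only edges between $V(G_1)$ and $V(G)\setminus V(G_1)$ are the two triangle edges incident to $v_1$, the graph $G\setminus v_1$ is the disjoint union of $G_1\setminus v_1$ and the graph $H$ obtained from the disjoint union of $G_2$ and $G_3$ by adding the single edge $v_2v_3$. By the induction hypothesis, $\lrw(G_1\setminus v_1)\le k-1$, while Lemma~\ref{lem:join} gives $\lrw(H)=k$. Concatenating an optimal layout of $G_1\setminus v_1$ with one of $H$ produces a linear layout of $G\setminus v_1$ of width at most $k$, since a cut of the concatenation either lies entirely within one component or separates the two components (in which case the cut-rank is the cut-rank of the relevant component alone).

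Case 2: $v\notin\{v_1,v_2,v_3\}$. By symmetry we may assume $v\in V(G_1)\setminus\{v_1\}$. Then $v_1$ still lies in $G_1\setminus v$, and $G\setminus v$ is a delta composition of $G_1\setminus v$, $G_2$, and $G_3$ with the same main triangle $v_1v_2v_3$. By the induction hypothesis, $\lrw(G_1\setminus v)\le k-1$, and $G_2,G_3\in\Delta_{k-1}$. Lemma~\ref{lem:twographs} then gives $\lrw(G\setminus v)\le k$.

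I do not anticipate a genuinely hard step; the work is structural rather than computational. The only thing to be careful about is correctly identifying the decomposition of $G\setminus v$ in each case so that the correct auxiliary lemma applies: in Case 1 the deletion disconnects $G$ and Lemma~\ref{lem:join} handles the non-trivial component, while in Case 2 the delta composition structure is preserved with one of the three pieces replaced by a strictly smaller graph to which induction applies together with Lemma~\ref{lem:twographs}.
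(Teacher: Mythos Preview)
Your proof is correct and follows essentially the same approach as the paper's: induction on $k$, splitting into the cases $v=v_1$ (where Lemma~\ref{lem:join} handles the connected component containing $G_2$ and $G_3$) and $v\neq v_1$ (where Lemma~\ref{lem:twographs} applies to the surviving delta composition). Your Case~1 is slightly more explicit than the paper's in separating off the disconnected piece $G_1\setminus v_1$ before invoking Lemma~\ref{lem:join}, but the argument is the same.
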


\begin{proof}
	We use induction on $k$.
        We may assume $k\ge 1$.
	So 
	$G$ is a delta composition of three graphs in $\Delta_{k-1}$,
        say $G_1$, $G_2$ and $G_3$
        with the main triangle  $v_1v_2v_3$ such that $v_i\in V(G_i)$ for $i=1$, $2$, $3$. 
        We may assume that $v\in V(G_1)$. 
        By the induction hypothesis, $G_1\setminus v$ has linear
        rank-width at most $k-1$.

        If $v=v_1$, then $G\setminus v$ is obtained from the disjoint union of
        three graphs
        $G_1\setminus v$, $G_2$, $G_3$
        by adding an edge $v_2v_3$ and so $G\setminus v$ has linear
        rank-width $k$ by Lemma~\ref{lem:join}.

	If $v\neq v_1$, then 
        $G\setminus v$ is a delta composition of two graphs in $\Delta_{k-1}$ 
	and one graph having linear rank-width at most $k-1$. 
	Thus by Lemma~\ref{lem:twographs}, 
	$\lrw(G\setminus v)=k$.
\end{proof}

\begin{LEM}\label{lem:elementaryloc}
	Let $k$ be a non-negative integer and $G\in \Delta_k$. 
        Then $G*v\setminus v$ has linear rank-width at most $k$ for
        each vertex $v$.
\end{LEM}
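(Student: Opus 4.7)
The plan is to proceed by induction on $k$, following the template of Lemma~\ref{lem:elementarydel}. The base case $k=0$ is immediate since then $G\cong K_2$ and $G*v\setminus v$ has only one vertex. For $k\ge 1$, write $G$ as a delta composition of $G_1,G_2,G_3\in\Delta_{k-1}$ with main triangle $v_1v_2v_3$. If $v\notin\{v_1,v_2,v_3\}$, then I may assume $v\in V(G_1)\setminus\{v_1\}$; since $N_G(v)\subseteq V(G_1)$, local complementation at $v$ modifies only edges inside $V(G_1)$, and $G*v\setminus v$ is a delta composition of $G_1*v\setminus v$, $G_2$, $G_3$ with the same main triangle. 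The induction hypothesis gives $\lrw(G_1*v\setminus v)\le k-1$, and Lemma~\ref{lem:twographs} yields $\lrw(G*v\setminus v)=k$.

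The harder case is $v\in\{v_1,v_2,v_3\}$, and by the symmetry of the delta composition I may assume $v=v_1$. Set $W=N_{G_1}(v_1)$ and $H_1=G_1*v_1\setminus v_1$. Inspecting $G*v_1$ directly shows that the subgraph of $G*v_1\setminus v_1$ induced on $V(H_1)\cup\{v_2,v_3\}$ equals the graph $J$ obtained from $H_1$ by adjoining two new vertices $v_2,v_3$, each adjacent to exactly $W$ and with no edge $v_2v_3$. The structural observation that drives the argument is that $J*v_2$ is isomorphic to $G_1$ (with $v_2$ playing the role of $v_1$) together with a twin $v_3$ of $v_1$; in particular $v_3$ is a twin of $v_2$ in $J*v_2$, and $(J*v_2)\setminus v_3\cong G_1\in\Delta_{k-1}$. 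Lemma~\ref{lem:twin} therefore furnishes a linear layout $L^*=(v_2,u_1,\ldots,u_m,v_3)$ of $J*v_2$ of width $k$, which by Proposition~\ref{prop:leq} is also a layout of $J$ of width $k$.

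I then take $L=L_2\oplus(u_1,\ldots,u_m)\oplus L_3$ as a layout of $G*v_1\setminus v_1$, where $L_2$ is a layout of $G_2$ of width $k$ ending at $v_2$ and $L_3$ is a layout of $G_3$ of width $k$ starting at $v_3$ (both exist by Proposition~\ref{prop:lrwplus}, the former obtained by reversing). Prefixes lying entirely inside $V(G_2)$ or $V(G_3)$ are controlled by $L_2$ and $L_3$, and the cuts at $v_2$ and at $v_3$ each have rank~$1$. For a prefix ending at a middle vertex $u_i\in V(H_1)$, writing $A=\{u_1,\ldots,u_i\}$, the cross-rank matrix between $V(G_2)\cup A$ and its complement has only zero rows on $V(G_2)\setminus\{v_2\}$ and only zero columns on $V(G_3)\setminus\{v_3\}$, because all cross-edges from $V(G_2)$ into the rest of $G*v_1\setminus v_1$ pass through $v_2$ and symmetrically for $V(G_3)$; consequently this matrix collapses to exactly the matrix computing $\cutrk_J(\{v_2\}\cup A)\le k$. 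Hence $L$ has width at most $k$, and $\lrw(G*v_1\setminus v_1)\le k$. The main obstacle—and the place requiring the most care—is this middle step: constructing the auxiliary graph $J$, verifying that $J*v_2$ satisfies the hypothesis of Lemma~\ref{lem:twin}, and carefully identifying the middle cross-rank matrix in $G*v_1\setminus v_1$ with the cut-rank matrix of $J$.
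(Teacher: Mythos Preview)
Your proof is correct and follows essentially the same route as the paper's: the same induction, the same case split, the same auxiliary graph $J$ (the paper calls it $G'_1=(G*v\setminus v)[V(G_1)\cup\{v_2,v_3\}]$), the same observation that $J*v_2\setminus v_3\cong G_1\in\Delta_{k-1}$ so that Lemma~\ref{lem:twin} yields a width-$k$ layout $(v_2)\oplus L_1\oplus(v_3)$ of $J$, and the same final layout $L_2\oplus L_1\oplus L_3$. The only difference is that you spell out the rank computation for the middle cuts, whereas the paper simply asserts ``it follows easily'' from the three induced-subgraph identifications $(G*v\setminus v)[V(G_2)]=G_2$, $(G*v\setminus v)[V(G_3)]=G_3$, and $(G*v\setminus v)[V(G_1)\cup\{v_2,v_3\}]=G'_1$.
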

\begin{proof}
	We use induction on $k$.
        We may assume $k\ge 1$.
	Let $G$ be a delta composition of $G_1,G_2,G_3\in
        \Delta_{k-1}$
        with the main triangle $v_1v_2v_3$ such that $v_i\in V(G_i)$ for $i=1$, $2$, $3$. 
        We may assume that $v\in V(G_1)$.

	If $v\neq v_1$, then 
        $G* v\setminus v$ is a delta composition of $G_1*v\setminus
        v$, $G_2$ and $G_3$ where $G_1*v\setminus v$ has linear
        rank-width at most $k-1$ by the induction hypothesis.
	Thus by Lemma~\ref{lem:twographs}, 
	$G* v\setminus v$ has linear rank-width $k$.

	So we may assume $v=v_1$. 
        let $G'_1=(G* v\setminus v)[V(G_1)\cup \{v_2, v_3\}]$.
        Since $v_3$ is a twin of $v_2$ in $G'_1$ and $v_3$ is not adjacent to $v_2$ in $G'_1*v_2$
        and $G'_1*v_2\setminus v_3$ is isomorphic to $G_1$ (see Figure~\ref{fig:sublocal}), 
        by Lemma~\ref{lem:twin},
        $G'_1$ has a linear layout $(v_2)\oplus L_1\oplus (v_3)$ of
        width $k$.

	By Proposition~\ref{prop:lrwplus}, $G_2$ has  a linear layout
        $L_{2}$ of width $k$ whose last vertex is $v_2$, 
	and $G_3$ has a linear layout $L_{3}$ of width $k$ whose first vertex is $v_3$. 

	It follows easily that $L=L_{2}\oplus L_{1}\oplus L_{3}$ 
        is a linear layout of $G* v\setminus v$ having width
        $k$
        because $(G*v\setminus v)[V(G_2)]=G_2$, $(G*v\setminus
        v)[V(G_3)]=G_3$, 
        and $(G*v\setminus v)[V(G_1)\cup\{v_2,v_3\}]=G'_1$.
\end{proof}

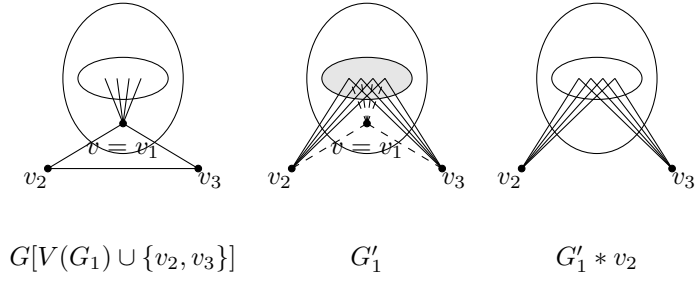
\begin{figure}\centering
 \tikzstyle{v}=[circle, draw, solid, fill=black, inner sep=0pt, minimum width=2.5pt]
\begin{tikzpicture}[scale=0.04]

\draw (40,50) -- (15,35) -- (65, 35) -- (40, 50);

\draw (40,65) ellipse (15 and 7);
\draw (40,50) -- (34,65);
\draw (40,50) -- (38,65);
\draw (40,50) -- (42,65);
\draw (40,50) -- (46,65);

\draw (40,65) ellipse (20 and 25);

\node at (11,31) {$v_2$};
\node at (69,31) {$v_3$};
\node at (40,41) {$v=v_1$};

\node [v] at (40,50) {};
\node [v] at (15,35) {};
\node [v] at (65,35) {};

\node at (40,5) {$G[V(G_1)\cup \{v_2,v_3\}]$};
	
\end{tikzpicture}\,
\begin{tikzpicture}[scale=0.04]

\draw[dashed] (40,50) -- (15,35);
\draw[dashed] (65, 35) -- (40, 50);

\draw[fill=gray!20] (40,65) ellipse (15 and 7);
\draw[dashed] (40,50) -- (34,65);
\draw[dashed] (40,50) -- (38,65);
\draw[dashed] (40,50) -- (42,65);
\draw[dashed] (40,50) -- (46,65);

\draw (15,35) -- (34,65);
\draw (15,35) -- (38,65);
\draw (15,35) -- (42,65);
\draw (15,35) -- (46,65);

\draw (65,35) -- (34,65);
\draw (65,35) -- (38,65);
\draw (65,35) -- (42,65);
\draw (65,35) -- (46,65);

\draw (40,65) ellipse (20 and 25);

\node at (11,31) {$v_2$};
\node at (69,31) {$v_3$};
\node at (40,41) {$v=v_1$};

\node [v] at (40,50) {};
\node [v] at (15,35) {};
\node [v] at (65,35) {};

\node at (40,5) {%
$G'_1$};
\end{tikzpicture}\,
\begin{tikzpicture}[scale=0.04]

\draw (40,65) ellipse (15 and 7);
\draw (15,35) -- (34,65);
\draw (15,35) -- (38,65);
\draw (15,35) -- (42,65);
\draw (15,35) -- (46,65);

\draw (65,35) -- (34,65);
\draw (65,35) -- (38,65);
\draw (65,35) -- (42,65);
\draw (65,35) -- (46,65);

\draw (40,65) ellipse (20 and 25);

\node at (11,31) {$v_2$};
\node at (69,31) {$v_3$};

\node [v] at (15,35) {};
\node [v] at (65,35) {};

\node at (40,5) {$G'_1* v_2$};

\end{tikzpicture}
\caption{The case $G* v\setminus v$ where $v=v_1$ in the proof of Lemma~\ref{lem:elementaryloc}.}
\label{fig:sublocal}
\end{figure}

\begin{LEM}\label{lem:elementarypiv}
	Let $k$ be a non-negative integer and $G\in \Delta_k$. 
        Then $G\pivot vw\setminus v$ has linear rank-width at most $k$
        for each edge $vw$.
\end{LEM}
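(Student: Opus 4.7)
The plan is to induct on $k$ and perform a case analysis on where the edge $vw$ lies in the delta decomposition of $G$, following the template of the proofs of Lemmas~\ref{lem:elementarydel} and~\ref{lem:elementaryloc}. The base case $k=0$ is immediate, since $G=K_2$ forces $G\pivot vw\setminus v=K_1$, of linear rank-width zero. For the inductive step, write $G$ as a delta composition of $G_1,G_2,G_3\in\Delta_{k-1}$ with main triangle $v_1v_2v_3$, where $v_i\in V(G_i)$, and split on where $vw$ lies.

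The easy case is $\{v,w\}\subseteq V(G_i)\setminus\{v_i\}$ for some $i$; without loss of generality, $i=1$. Since pivoting $vw$ only toggles edges among $N_G(v)\cup N_G(w)\subseteq V(G_1)$, the graph $G\pivot vw\setminus v$ is a delta composition of $G_1\pivot vw\setminus v$, $G_2$, and $G_3$. By induction $\lrw(G_1\pivot vw\setminus v)\le k-1$, and Lemma~\ref{lem:twographs} gives $\lrw(G\pivot vw\setminus v)\le k$. The harder cases are those in which $vw$ is incident to some $v_i$, either inside $G_i$ or along the main triangle. After relabelling and using the symmetry $G\pivot vw=G\pivot wv$, the representative ``hard'' subcase is (2a): $v=v_1$ and $w\in N_{G_1}(v_1)$; the subcases in which the deleted vertex is not the ``corner'' $v_i$, and the main-triangle subcases, are handled analogously. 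For~(2a), set $H=G\pivot v_1 w\setminus v_1$. A direct computation of the pivot shows that $H[V(G_2)]=G_2$, $H[V(G_3)]=G_3$, $H[V(G_1)\setminus\{v_1\}]=(G_1\pivot v_1 w)\setminus v_1$ (of linear rank-width at most $k-1$ by induction), and $v_2,v_3$ are adjacent twins in $H$, each adjacent to $\{w\}\cup(N_{G_1}(w)\setminus\{v_1\})$. Imitating Lemma~\ref{lem:elementaryloc}, I would then set $G'_1:=H[V(G_1)\cup\{v_2,v_3\}\setminus\{v_1\}]$, find a local complementation $G''_1$ of $G'_1$ in which $v_2$ and $v_3$ are non-adjacent twins and $G''_1\setminus v_3$ is isomorphic to some graph in $\Delta_{k-1}$, apply Lemma~\ref{lem:twin} to obtain a layout of $G''_1$ (hence of $G'_1$, via Proposition~\ref{prop:leq}) of width $k$ with $v_2$ first and $v_3$ last, and combine this with layouts of $G_2$ and $G_3$ of width $k$ provided by Proposition~\ref{prop:lrwplus} to obtain a layout of $H$ of width at most~$k$.

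The main obstacle I expect is identifying the correct local complementation $G''_1$ of $G'_1$ in subcase~(2a) and verifying the isomorphism $G''_1\setminus v_3\cong G'\in\Delta_{k-1}$. In the analogous step of Lemma~\ref{lem:elementaryloc}, the correcting operation was simply $*v_2$, which undid the single toggle produced by $*v_1$; here, however, $\pivot v_1 w$ is a sequence of three local complementations that effectively swaps the roles of $v_1$ and $w$ inside $G_1$, so the correcting operation must also compensate for this swap. A natural candidate is a local complementation at $w$, possibly composed with $*v_2$, after which $w$ should take over the role of $v_1$ and the $\Delta_{k-1}$ structure of $G_1$, with $w$ playing the part of the ``corner'' vertex, should be recovered on $V(G_1)\setminus\{v_1\}$ together with the twin $v_2$. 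The remaining subcases (where the deleted vertex differs from the corner, or where $vw$ is a main triangle edge) should follow by analogous twin-based layout constructions, differing only in which pair of vertices from $\{v_1,v_2,v_3\}$ plays the twin role.
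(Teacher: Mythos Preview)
Your overall strategy matches the paper's: induct on $k$, split on where $vw$ sits in the delta decomposition, and in the hard case build a width-$k$ layout of $G'_1=H[(V(G_1)\setminus\{v_1\})\cup\{v_2,v_3\}]$ via the twin Lemma~\ref{lem:twin}, then splice it between width-$k$ layouts of $G_2$ and $G_3$ from Proposition~\ref{prop:lrwplus}. The ingredient you are missing is Lemma~\ref{lem:subsequentpivot} ($G\pivot vv_1\pivot v_1v_2=G\pivot vv_2$), which the paper invokes twice and which closes exactly the gaps you flag.

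First, it removes your deferred subcases outright. Since $G\pivot vw_1\setminus v$ and $G\pivot vw_2\setminus v$ are locally equivalent whenever $w_1,w_2\in N_G(v)$, by Proposition~\ref{prop:leq} it suffices to prove the bound for a \emph{single} chosen neighbour $w$ of each $v$. Thus when $v\neq v_1$ one simply picks $w\neq v_1$ and lands in your easy case (or in the leaf case $G\pivot vw\setminus v\cong G\setminus w$, handled by Lemma~\ref{lem:elementarydel}); and when $v=v_1$ one picks $w\in N_{G_1}(v_1)$, so neither the main-triangle edges nor the case $v\neq v_1$, $w=v_1$ ever require a separate argument.

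Second, it supplies the ``correcting operation'' in your case~(2a). It is not $*w$ or $*w*v_2$ but the pivot $\pivot v_2w$: with $H_0=G[V(G_1)\cup\{v_2,v_3\}]$, Lemma~\ref{lem:subsequentpivot} gives
\[
G'_1\pivot v_2w \;=\; H_0\pivot v_1w\pivot wv_2\setminus v_1 \;=\; H_0\pivot v_1v_2\setminus v_1,
\]
and the right-hand side is transparently $G_1$ (with $v_2$ in the role of $v_1$) together with an adjacent twin $v_3$; see Figure~\ref{fig:subpivot}. Now Lemma~\ref{lem:twin} yields a layout $(v_2)\oplus L_1\oplus(v_3)$ of width $k$ for $G'_1\pivot v_2w$, hence for $G'_1$ by Proposition~\ref{prop:leq}, and your splicing argument finishes. (A minor correction: $v_2$ and $v_3$ are twins only in $G'_1$, not in the full graph $H$, where they still see $N_{G_2}(v_2)$ and $N_{G_3}(v_3)$ respectively.)
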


\begin{proof}
  For each vertex $v$,  it is enough to prove it
  for one neighbor $w$  of $v$ 
  by Proposition~\ref{prop:leq} and Lemma~\ref{lem:subsequentpivot}.
  
	We use induction on $k$.
        We may assume $k\ge 1$.
	Let $G$ be a delta composition of $G_1,G_2,G_3\in \Delta_{k-1}$ 
	with the main triangle $v_1v_2v_3$ such that $v_i\in V(G_i)$ for $i=1$, $2$, $3$. 
        We may assume that $v\in V(G_1)$. 

        If $v$ has only one neighbor $w$, then $G\pivot vw\setminus v$
        is isomorphic to $G\setminus w$ and by
        Lemma~\ref{lem:elementarydel} we know that $G\setminus w$ has
        linear rank-width at most $k$.
        So we may assume that $v$ has at least two neighbors. 
	
	If $v\neq v_1$, 
        then we choose a neighbor $w$ of $v$ such that $w\neq v_1$.
	It is easy to observe that $G\pivot vw\setminus v$ is a  delta composition of 
        $G_1\pivot vw\setminus v$, $G_2$, $G_3$
        where
        $G_1\pivot vw\setminus v$ has linear rank-width at most $k-1$
        by the induction hypothesis. 
	Hence, by Lemma~\ref{lem:twographs}, $G\pivot vw\setminus v$
        has linear rank-width $k$. 

	Thus we may assume $v=v_1$. 
	Since $G[V(G_1)\cup \{v_2,v_3\}]\pivot vv_2\setminus v$ is
        isomorphic to a graph obtained from $G_1$ by adding a
        twin of $v$ (see Figure~\ref{fig:subpivot}),
	by Lemma~\ref{lem:twin}, 
        $G[V(G_1)\cup \{v_2,v_3\}]\pivot vv_2\setminus v$ has a linear
        layout $(v_2)\oplus L_1\oplus (v_3)$ 
        of width $k$.
        
	Let $w$ be a neighbor of $v$ in $G_1$
        and let $G'_1=G[V(G_1)\cup \{v_2,v_3\}]\pivot vw\setminus v$.
        By Lemma~\ref{lem:subsequentpivot}, 
        $G'_1\pivot v_2w = G[V(G_1)\cup \{v_2,v_3\}]\pivot vw \pivot
        v_2w \setminus v= G[V(G_1)\cup \{v_2,v_3\}]\pivot v v_2
        \setminus v$
        and therefore $(v_2)\oplus L_1\oplus (v_3)$ is also a linear
        layout of $G'_1$ having width $k$.

	By Proposition~\ref{prop:lrwplus}, $G_2$ has a linear layout
        $L_{2}$ of width $k$ whose last vertex  is $v_2$, 
	and $G_3$ has a linear layout $L_{3}$ of width $k$ whose first vertex is $v_3$. 
	
        It is now easy to see that 
        $L=L_{2}\oplus L_{1}\oplus L_{3}$ 
        is a linear layout of $G\pivot vw\setminus v$ having width at
        most $k$
        because $(G\pivot vw\setminus v)[V(G_2)]=G_2$,
        $(G\pivot vw\setminus v)[V(G_3)]=G_3$,
        and
        $(G\pivot vw\setminus v)[V(G_1)\cup\{v_2,v_3\}]=G'_1$.
\end{proof}

\begin{figure}\centering
 \tikzstyle{v}=[circle, draw, solid, fill=black, inner sep=0pt, minimum width=2.5pt]
\begin{tikzpicture}[scale=0.05]

\draw (40,50) -- (15,35) -- (65, 35) -- (40, 50);

\draw (40,65) ellipse (15 and 7);
\draw (40,50) -- (34,65);
\draw (40,50) -- (38,65);
\draw (40,50) -- (42,65);
\draw (40,50) -- (46,65);

\draw (40,65) ellipse (20 and 25);

\node at (11,31) {$v_2$};
\node at (69,31) {$v_3$};
\node at (40,41) {$v=v_1$};

\node [v] at (40,50) {};
\node [v] at (15,35) {};
\node [v] at (65,35) {};

\node at (40,15) {$G[V(G_1)\cup \{v_2,v_3\}]$};
	
\end{tikzpicture}\quad\quad
\begin{tikzpicture}[scale=0.05]

\draw[dashed] (40,50) -- (15,35) -- (65,35) ;
\draw (65, 35) -- (40, 50);

\draw (40,65) ellipse (15 and 7);
\draw (40,50) -- (34,65);
\draw (40,50) -- (38,65);
\draw (40,50) -- (42,65);
\draw (40,50) -- (46,65);

\draw (65,35) -- (34,65);
\draw (65,35) -- (38,65);
\draw (65,35) -- (42,65);
\draw (65,35) -- (46,65);

\draw (40,65) ellipse (20 and 25);

\node at (11,31) {$v$};
\node at (69,31) {$v_3$};
\node at (40,41) {$v_2$};

\node [v] at (40,50) {};
\node [v] at (15,35) {};
\node [v] at (65,35) {};

\node at (40,15) {$G[V(G_1)\cup \{v_2,v_3\}]\pivot v_1v_2\setminus v_1$};
	
\end{tikzpicture}
\caption{The case $G\pivot v_1v_2\setminus v$ in the proof of Lemma \ref{lem:elementarypiv}.}
\label{fig:subpivot}
\end{figure}

Finally we are ready to prove the main theorem of this section.
\begin{proof}[Proof of Theorem~\ref{thm:main}]
	Let $G\in \Delta_k$. 
	By Proposition~\ref{prop:lrwplus}, $G$ has linear rank-width $k+1$.
	And by
        lemmas~\ref{lem:elementarydel}, \ref{lem:elementaryloc}, and \ref{lem:elementarypiv}, 
        every elementary vertex-minor of $G$ has linear rank-width at
        most $k$. 
	Therefore, $G$ is an excluded vertex-minor for graphs of linear rank-width at most $k$.
\end{proof}

\section{Locally equivalent graphs in $\Delta_k$ are isomorphic}\label{sec:notlocal}

	In this section, we will prove that 
	if two graphs in $\Delta_k$ are locally equivalent, then they are isomorphic.
	We will prove the theorem for a more general class of graphs containing $\Delta_k$.
	
	A \emph{block} in a graph $G$ is a maximal connected subgraph of $G$
	having no cut-vertices.
    A graph is a \emph{block graph} if 
	every block of it is a complete graph.
        It is easy to see that every induced subgraph of a block graph
        is a block graph.
        
    A partition $(A,B)$ of $V(G)$ is a \emph{split} of a graph $G$ if
    $\abs{A}\geq 2$, $\abs{B}\geq 2$, and 
    $\cutrk_G(A)\le 1$.

	   We first show that every graph in $\Delta_k$
	 is a  block graph without simplicial vertices of degree at least $2$.
	\begin{LEM}\label{lem:csblock}
	Every graph in $\Delta_k$
	is a block graph without simplicial vertices of degree at
        least $2$.
	\end{LEM}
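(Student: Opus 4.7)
The plan is to prove Lemma~\ref{lem:csblock} by induction on $k$. The base case $k=0$ is immediate: $K_2$ is trivially a block graph whose only block is itself, and both vertices have degree $1$, so no simplicial vertex has degree at least~$2$. For the inductive step, let $G \in \Delta_k$ be a delta composition of $G_1, G_2, G_3 \in \Delta_{k-1}$ with main triangle $v_1 v_2 v_3$, where $v_i \in V(G_i)$ for $i=1,2,3$. A preliminary side-induction, trivial since delta composition only adds edges, shows that every graph in $\Delta_{k-1}$ has no isolated vertex, a fact I will use below.

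First I would identify the blocks of $G$ explicitly. The key observation is that $v_i$ is the unique vertex of $G_i$ incident with edges outside $V(G_i)$. Consequently, any cycle of $G$ that uses an edge of $G_i$ and an edge of the main triangle would have to enter and leave $V(G_i)$ through the cut vertex $v_i$, which is impossible. Therefore, the main triangle forms one block of $G$, and each block of each $G_i$ remains a block of $G$. Since the triangle is a $K_3$ and, by the inductive hypothesis, every block of each $G_i$ is complete, $G$ is a block graph.

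Next I would verify the absence of simplicial vertices of degree at least~$2$. A standard property of block graphs is that a simplicial vertex of degree at least $2$ lies in exactly one block, because neighbors in different blocks are non-adjacent. For $v_i$: by the preliminary observation $v_i$ has a neighbor in $G_i$ and hence lies in at least one block of $G_i$, in addition to the triangle block, so $v_i$ is in at least two blocks and is therefore not simplicial (its triangle-neighbors are non-adjacent to its $G_i$-neighbors). For any other vertex $u \in V(G_i) \setminus \{v_i\}$, the neighborhood of $u$ in $G$ equals its neighborhood in $G_i$, and the induced adjacency on $N_G(u)$ matches that in $G_i$; thus $u$ is simplicial in $G$ with degree at least~$2$ if and only if the same holds in $G_i$, which is excluded by the inductive hypothesis.

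The only step that requires any real care is the structural claim about blocks, namely that no block of any $G_i$ merges with the main triangle in $G$; I would present the cycle-through-$v_i$ argument cleanly as a small sub-claim. The rest is routine bookkeeping between the induction hypothesis, the fact that delta composition adds only the three triangle edges, and the elementary characterization of simplicial vertices in block graphs.
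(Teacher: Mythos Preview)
Your proof is correct. The inductive argument is sound: the cut-vertex status of each $v_i$ cleanly separates the block structure, and your treatment of simplicial vertices handles the two cases (the $v_i$ themselves versus the remaining vertices) properly.

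The paper takes a different, shorter route. Instead of inducting on the block structure and tracking simplicial vertices case by case, it observes two global facts that follow directly from the construction: (i) every block of $G$ is a $K_2$ or a $K_3$, and (ii) every vertex of $G$ has \emph{odd} degree (each delta composition adds exactly two to the degree of each $v_i$ and leaves all other degrees unchanged). Fact (i) gives the block-graph conclusion immediately. For the simplicial part, any simplicial vertex in such a block graph lies in exactly one block, hence has degree $1$ or $2$; fact (ii) rules out degree $2$. Your approach avoids the parity trick at the cost of a longer structural analysis, while the paper's parity argument is a one-line kill once the odd-degree observation is made. Both arguments implicitly need the same block decomposition (main triangle plus blocks of the $G_i$), so your ``sub-claim'' is not wasted effort---the paper simply asserts it.
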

	\begin{proof}
          Let $G$ be a graph in $\Delta_k$. 
          From the construction of $\Delta_k$, 
          every vertex of $G$ has odd degree
          and each block of $G$ is isomorphic to $K_2$ or $K_3$.
          Therefore $G$ is a block graph
          and has no simplicial vertex of degree at least $2$. 
        \end{proof}

	We will prove that if two block graphs without simplicial
        vertices of degree at least $2$ are
        locally equivalent, then they are isomorphic.
	We will use the \emph{canonical decomposition} of a graph, 
	a useful tool introduced by Cunningham~\cite{Cunningham82}.

\subsection{Canonical decompositions of a connected graph}
	
	In this subsection, we will define the canonical decompositions of a connected graph,
        following the presentation by Bouchet~\cite{Bouchet88},
        and discuss the canonical decompositions of locally
        equivalent graphs.

	A \emph{marked graph} is a graph with a set of \emph{marked} edges, 
	and for a marked graph $D$, let $M(D)$ be the set of all marked edges of $D$. 
        A \emph{marked vertex} of a
        marked graph is a vertex incident with some marked edges.

	    A graph without splits is called \emph{prime}.
	If $G$ has a split $(A,B)$, then a marked graph $G'$ is 
	called a \emph{simple decomposition} of $G$ if 
        $G'$ is 
        obtained from the disjoint union of $G[A]$ and
        $G[B]$
        by 
        adding two new vertices $a$ and $b$, 
	adding a marked edge $ab$, %
	making $a$ adjacent to all vertices in $A$ having
        neighbors in $B$ in $G$, 
        and making $b$ adjacent to all vertices in $B$ having
        neighbors in $A$ in $G$.
		
	A \emph{split decomposition} of a connected graph $G$ is recursively defined to 
	be either $G$ with no marked edges 
        or a marked graph obtained from a split decomposition $D$ 
	by replacing a component $H$ of $D\setminus M(D)$ with a simple decomposition of $H$.

\begin{figure}\centering
\tikzstyle{v}=[circle, draw, solid, fill=black, inner sep=0pt, minimum width=2.5pt]
\tikzset{photon/.style={decorate, decoration={snake}}}
\begin{tikzpicture}[scale=0.04]

\path [fill=gray!20] (22,40) ellipse (21 and 15);
\path [fill=gray!20] (60,40) ellipse (10 and 15);
\path [fill=gray!20] (30,10) ellipse (12 and 10);

\draw (10, 50) -- (30, 50) -- (10,30) -- (30,30) -- (10, 50);
\draw (30,50) -- (40,40) -- (30,30);
\draw (55,40) -- (65,30) -- (65,50) -- (55,40);
\draw (30,15) -- (20,5) -- (40,5);

\draw[photon] (40,40) -- (55, 40);
\draw[photon] (30,30) -- (30, 15);

\draw (35,30) node{$c$};
\draw (35,15) node{$d$};
\draw (40,35) node{$e$};
\draw (55,35) node{$f$};

\node [v] at (10,50) {};
\node [v] at (30,50) {};
\node [v] at (65,50) {};
\node [v] at (10,30) {};
\node [v] at (30,30) {};
\node [v] at (65,30) {};

\node [v] at (40,40) {};
\node [v] at (55,40) {};

\node [v] at (30,15) {};
\node [v] at (20,5) {};
\node [v] at (40,5) {};

\draw (10,55) node{$v_1$};
\draw (30,55) node{$v_2$};
\draw (65,55) node{$v_3$};
\draw (10,25) node{$v_4$};
\draw (15,5) node{$v_5$};
\draw (65,25) node{$v_6$};
\draw (45,5) node{$v_7$};

\draw (95,-10) node{($\Longrightarrow$) Replacing a bag with a simple decomposition of it};

\draw (95,-20) node{($\Longleftarrow$) Recomposing along a marked edge $ab$};

\path [fill=gray!20] (30+90,40) ellipse (10 and 15);
\path [fill=gray!20] (30+120,40) ellipse (14 and 15);
\path [fill=gray!20] (60+120,40) ellipse (10 and 15);
\path [fill=gray!20] (30+120,10) ellipse (12 and 10);

\draw (-5+120 , 50) -- (5+120 ,40) -- (-5+120 ,30);
\draw (20+120 ,40) -- (30+120 ,50) -- (40+120 ,40) -- (30+120 ,30) -- (20+120 ,40);
\draw (55+120 ,40) -- (65+120 ,30) -- (65+120 ,50) -- (55+120 ,40);
\draw (30+120 ,15) -- (20+120 ,5) -- (40+120 ,5);

\draw[photon] (5+120 ,40) -- (20+120 , 40);
\draw[photon] (40+120 ,40) -- (55+120 , 40);
\draw[photon] (30+120 ,30) -- (30+120 , 15);

\draw (5+120 ,35) node{$a$};
\draw (20+120 ,35) node{$b$};
\draw (35+120 ,30) node{$c$};
\draw (35+120 ,15) node{$d$};
\draw (40+120 ,35) node{$e$};
\draw (55+120 ,35) node{$f$};

\node [v] at (-5+120 ,50) {};
\node [v] at (30+120 ,50) {};
\node [v] at (65+120 ,50) {};
\node [v] at (-5+120 ,30) {};
\node [v] at (30+120 ,30) {};
\node [v] at (65+120 ,30) {};

\node [v] at (5+120 ,40) {};
\node [v] at (20+120 ,40) {};
\node [v] at (40+120 ,40) {};
\node [v] at (55+120 ,40) {};

\node [v] at (30+120 ,15) {};
\node [v] at (20+120 ,5) {};
\node [v] at (40+120 ,5) {};

\draw (-5+120 ,55) node{$v_1$};
\draw (30+120 ,55) node{$v_2$};
\draw (65+120 ,55) node{$v_3$};
\draw (-5+120 ,25) node{$v_4$};
\draw (15+120 ,5) node{$v_5$};
\draw (65+120 ,25) node{$v_6$};
\draw (45+120 ,5) node{$v_7$};
\end{tikzpicture}
\caption{Two operations on a split decomposition.}\label{fig:recomposing}
\end{figure}

Let $D$ be a split decomposition of a connected graph $G$.
	Clearly $D$ is connected.
	Each component of $D\setminus M(D)$ is called a \emph{bag} of $D$.			If $ab$ is a marked edge in a split decomposition $D$,  
	then $D\pivot ab\setminus a\setminus b$ is called a split decomposition obtained by \emph{recomposing $ab$}. 
	See Figure~\ref{fig:recomposing} for an example.
	Given a split decomposition $D$, we can recover the graph $G$ 
        by recomposing all marked edges.
	Note that 
        the set of  vertices of $G$ is exactly the set of all
        unmarked vertices of $D$.
    It is easy to observe the following.
    \begin{LEM}\label{lem:induced}
    Let $D$ be a split decomposition of a connected graph $G$.
    If $B$ is a bag of $D$,
    then $G$ has an induced subgraph isomorphic to $B$.
    \end{LEM}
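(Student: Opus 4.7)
The plan is to induct on $\lvert M(D)\rvert$, the number of marked edges of $D$. The base case $\lvert M(D)\rvert=0$ is immediate: $D$ equals $G$ as an unmarked graph, and each bag is simply a connected component of $G$. For the inductive step with $\lvert M(D)\rvert\ge 1$, fix a bag $B$ of $D$ and any marked edge $ab$, and let $D'=D\pivot ab\setminus a\setminus b$ be the split decomposition of $G$ obtained by recomposing $ab$; note that $\lvert M(D')\rvert=\lvert M(D)\rvert-1$. If neither $a$ nor $b$ lies in $B$, then $B$ is also a bag of $D'$ and we are done by induction. Otherwise, we may assume $a\in B$, and let $B'$ be the bag containing $b$, necessarily distinct from $B$ since edges within a bag are unmarked.

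The next step is to describe the bag $B''$ of $D'$ that results from merging $B\setminus a$ and $B'\setminus b$. Because every marked vertex in a split decomposition is incident to exactly one marked edge, we have $N_D(a)=\{b\}\cup N_B(a)$ and $N_D(b)=\{a\}\cup N_{B'}(b)$, and these neighborhoods are disjoint. Applying the explicit pivot formula with $V_1=\emptyset$, $V_2=N_B(a)$, and $V_3=N_{B'}(b)$, the edges of $B''$ consist precisely of the edges of $B\setminus a$, the edges of $B'\setminus b$, and a complete bipartite graph between $N_B(a)$ and $N_{B'}(b)$. Applying the induction hypothesis to $D'$ and the bag $B''$, $G$ has an induced subgraph isomorphic to $B''$.

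It therefore suffices to embed $B$ as an induced subgraph of $B''$. Choose any neighbor $v$ of $b$ in $B'$; such $v$ exists because $B'$ is a connected bag with at least two vertices. Then the induced subgraph $B''[(V(B)\setminus\{a\})\cup\{v\}]$ is isomorphic to $B$ via the bijection that fixes $V(B)\setminus\{a\}$ and sends $v$ to $a$: the edges inside $V(B)\setminus\{a\}$ agree with those of $B\setminus a$, and the neighbors of $v$ inside $V(B)\setminus\{a\}$ are exactly $N_B(a)$, contributed by the new bipartite edges (no edge of $B'\setminus b$ contributes since only the single vertex $v$ is selected from it). Composing this embedding with the induced subgraph of $G$ provided by induction for $B''$ completes the proof. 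The main obstacle is the structural bookkeeping in identifying $B''$: one must verify carefully that recomposing $ab$ glues the unmarked components $B\setminus a$ and $B'\setminus b$ together with precisely the stated edges, which relies crucially on the fact that each marked vertex is incident to exactly one marked edge, so that $V_1=N_D(a)\cap N_D(b)=\emptyset$ in the pivot formula and the effect of recomposition becomes cleanly describable.
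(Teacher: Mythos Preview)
Your proof is correct. The paper does not actually give a proof of this lemma, stating only that it is ``easy to observe''; your induction on the number of marked edges, recomposing one edge at a time and locating $B$ inside the merged bag $B''$, is precisely the natural argument the paper is gesturing at, and all the details you supply (in particular that each marked vertex is incident to exactly one marked edge, so $V_1=\emptyset$ in the pivot description, and that $b$ has a neighbor in $B'$ since bags are connected with at least two vertices) check out.
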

	
        A bag is called \emph{star} if it is isomorphic to $K_{1,n}$
        for some $n\ge 2$
        and it is called \emph{complete} if it is isomorphic to $K_{n}$
        for some $n\ge 1$.
        A non-leaf vertex of a star bag is called the \emph{center}.
	Two bags $C_1$ and $C_2$ of $D$ are \emph{neighbors} if there exist $v_1\in V(C_1)$, $v_2\in V(C_2)$ such that $v_1v_2\in M(D)$. 
	A split decomposition $D$ of a connected graph is called the \emph{canonical decomposition} if it satisfies the following: 

\begin{enumerate}[(i)]
	\item each bag of $D$ is prime, star, or complete,
	\item no two complete bags are neighbors,
	\item if two star bags are neighbors and $e$ is the marked edge connecting them, 
	then two end vertices of $e$ are both centers or both leaves of the bags. 
\end{enumerate}

The conditions (ii) and (iii) can be justified as follows. If there are
two complete bags that are neighbors, then we can recompose them to
create a bigger complete bag. If there are two star bags having a
marked edge joining a center of one to a leaf of another, we can also
recompose them to make a bigger star bag.
Thus the conditions (ii) and (iii) ensure that we do not decompose a complete or star
bag.
It turns out that each connected graph has a unique canonical decomposition.

\begin{LEM}[Cunningham \cite{Cunningham82}]\label{lem:canonicaleq}
  Every connected graph has a unique canonical decomposition.
\end{LEM}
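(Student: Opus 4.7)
The plan is to prove existence and uniqueness separately, with uniqueness being the substantive part.

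For existence, I would proceed by iterated refinement. Start with the trivial split decomposition $D_0 = G$ (no marked edges, a single bag). As long as some bag $B$ of the current split decomposition $D$ is not prime, $B$ has a split $(A_1,A_2)$, and replacing $B$ by its simple decomposition along $(A_1,A_2)$ yields a split decomposition $D'$ of $G$ whose bags are strictly smaller than $B$ together with the old bags of $D$. Since bag sizes strictly decrease on the refined piece and no bag ever exceeds $\abs{V(G)}$, this process terminates with a split decomposition $D^\ast$ all of whose bags are prime. To reach conditions (ii) and (iii), scan $D^\ast$ for violating pairs of neighbor bags and recompose the offending marked edge: recomposing two adjacent complete bags produces a single complete bag (preserving primeness of other bags), and recomposing two adjacent star bags whose shared marked edge joins a center to a leaf produces a single star bag. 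These operations decrease the number of marked edges, so the procedure halts at a split decomposition satisfying (i)--(iii).

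For uniqueness I would use induction on $\abs{V(G)}$. The base case is trivial. For the inductive step, the key tool is the interaction of splits: call two splits $(A_1,A_2)$ and $(B_1,B_2)$ \emph{parallel} if some $A_i$ is contained in some $B_j$, and \emph{crossing} otherwise. A standard submodularity argument using the fact $\cutrk_G(A)+\cutrk_G(B)\ge \cutrk_G(A\cap B)+\cutrk_G(A\cup B)$ together with $\cutrk_G(A_i)\le 1$ shows that when two splits cross, all four intersections $A_i\cap B_j$ have $\cutrk_G(A_i\cap B_j)\le 1$, and in fact the bipartite adjacency between a crossing pair is forced to have a very restricted rank-one structure. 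Consequently, crossing splits can occur only inside a bag that is either complete or a star, and on such a bag every split is realized by an arbitrary partition (for complete) or by separating some leaves from the rest (for star). Call a split \emph{strong} if it crosses no other split. The set of strong splits of $G$ forms a laminar-style family, and the marked edges of any canonical decomposition correspond bijectively to this family.

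To finish the uniqueness, let $D$ and $D'$ both be canonical decompositions of $G$, and suppose $G$ has a nontrivial split (otherwise $G$ is prime and both are the trivial decomposition). Take a strong split $(A,B)$; it is realized as some marked edge $ab$ in both $D$ and $D'$ by the correspondence above. Recomposing any additional marked edges on the $A$-side until only $ab$ remains in that direction produces simple decompositions $(a\text{-side},b\text{-side})$ that agree in $D$ and $D'$. Recurse on each side, which are connected graphs with fewer vertices (after discarding the marker vertex), invoking the inductive hypothesis. The main obstacle will be justifying that strong splits are exactly the marked edges and that conditions (ii) and (iii) eliminate all residual freedom coming from the many splits available inside complete and star bags; this is exactly the place where the canonical conditions do work, because any ``alternative'' way of splitting a complete or star component would create a neighboring complete or star bag violating (ii) or (iii). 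Once this rigidity is established, the inductive step glues together and uniqueness follows.
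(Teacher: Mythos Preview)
The paper does not give a proof of this lemma at all: it is stated with attribution to Cunningham~\cite{Cunningham82} and then used as a black box. So there is nothing in the paper to compare your attempt against; you are effectively reproving a cited result.

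That said, your outline is broadly in the spirit of Cunningham's original argument. The existence part is essentially fine. For uniqueness, you correctly isolate the notion of a split that crosses no other split (what Cunningham calls a \emph{good} split) and the idea that the marked edges of a canonical decomposition should correspond to exactly these. However, you yourself flag the main obstacle and do not actually discharge it: proving that every marked edge of a canonical decomposition arises from a strong split, and conversely, requires a nontrivial case analysis of how crossing splits interact inside a single bag (this is where one shows that a family of mutually crossing splits forces the bag to be complete or a star). The submodularity inequality alone gives you that the four quadrants have cut-rank at most~$1$, but it does not by itself yield the structural dichotomy ``complete or star''; one needs to argue further about which of the quadrants can be empty and how the rank-one pieces fit together. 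Until that is done, your inductive step does not close, because you cannot yet assert that a chosen strong split is realized as a marked edge in \emph{both} $D$ and $D'$. So as written this is a reasonable plan but not yet a proof.
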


In Appendix~\ref{sec:appendix}.
we present the canonical decompositions of graphs in $\Delta_k$.
    
A path in a marked graph is \emph{alternating}
if every second edge is marked and other edges are unmarked.
Let $D$ be a split decomposition of a connected graph $G$. 
    Two unmarked vertices $x$ and $y$ are \emph{linked} in $D$ if
    $D$ has an alternating path from $x$ to $y$. %
 The proof of the following lemma is an easy induction on the number of bags of the decomposition.
    
\begin{LEM}\label{lem:represent} 
Let $D$ be a split decomposition of a connected graph $G$ and let $v$, $w$ be two distinct unmarked vertices of $D$. 
 Then $v$ and $w$ are linked in $D$ if and only if $vw\in E(G)$.
\end{LEM}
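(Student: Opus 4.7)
The plan is to proceed by induction on $\abs{M(D)}$, the number of marked edges of $D$. In the base case $\abs{M(D)}=0$, the decomposition $D$ equals $G$, so the only alternating paths have length one (a single unmarked edge) and linkage coincides with adjacency in $G$.

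For the inductive step, I would pick any marked edge $ab\in M(D)$ and set $D'=D\pivot ab\setminus a\setminus b$, so that $D'$ is a split decomposition of $G$ with one fewer marked edge. The induction hypothesis applied to $D'$ asserts that two unmarked vertices are linked in $D'$ iff they are adjacent in $G$, so the task reduces to showing that linkage in $D$ and in $D'$ agree on pairs of unmarked vertices. Two structural facts drive this comparison. First, each marked vertex of a split decomposition is incident to exactly one marked edge (by a direct induction on the simple-decomposition construction), so any marked vertex appears at most once as an interior vertex of an alternating path. Second, by inspecting the pivot, $E(D')$ is precisely $E(D)$ restricted to $V(D)\setminus\{a,b\}$ together with a complete bipartite family of new unmarked edges between $A':=N_D(a)\setminus\{b\}$ and $B':=N_D(b)\setminus\{a\}$.

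The forward direction will be routine. Given an alternating path $P$ in $D$ from $v$ to $w$, either $P$ avoids $\{a,b\}$ and is already alternating in $D'$, or the unique marked edge at $a$ forces $P$ to contain a subpath $x-a-b-y$ with $x\in A'$ and $y\in B'$; replacing this subpath by the single new unmarked edge $xy$ yields an alternating path in $D'$, with alternation and the no-repeated-vertex condition being easy to verify.

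The main obstacle lies in the backward direction: I need to show that any alternating path $P'$ in $D'$ uses at most one new $A'$-$B'$ edge, so that the inverse substitution $xy\mapsto x-a-b-y$ produces a genuine alternating path in $D$. The cleanest justification I see uses the bag-tree structure of a split decomposition: consecutive marked edges of an alternating path correspond to distinct edges of the bag tree, so the induced bag-walk is a simple path in the tree and each bag of $D'$ is visited at most once. Since all new $A'\times B'$ edges lie inside the single merged bag of $D'$ that replaces the two bags of $D$ containing $a$ and $b$, at most one new edge appears in $P'$. Performing the substitution then closes the induction.
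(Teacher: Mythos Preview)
Your proposal is correct and follows essentially the same approach the paper indicates: the paper states only that ``the proof of the following lemma is an easy induction on the number of bags of the decomposition,'' which is equivalent to your induction on $\abs{M(D)}$. You have simply supplied the details the paper omits; in particular, your bag-tree argument showing that an alternating path in $D'$ uses at most one new $A'\times B'$ edge is a clean way to close the backward direction of the inductive step.
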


	For $v\in V(G)$, we say a vertex $w$ in $D$ \emph{represents} $v$
	if %
	$D$ has an alternating path from $v$ to $w$ with even length
        (possibly $0$). %

	For $v\in V(G)$, let $D* v$ be a marked graph 
        obtained from $D$ by replacing $B$ with $B*w$
        for each bag $B$ of $D$ having a vertex $w$ representing $v$.
	See Figure~\ref{fig:loconsd} for an example.
	It is easy to observe that $D*v$ is a split decomposition of $G*v$.
        Moreover, the following lemma is known.  

\begin{figure}\centering
\tikzstyle{v}=[circle, draw, solid, fill=black, inner sep=0pt, minimum width=2.5pt]
\tikzset{photon/.style={decorate, decoration={snake}}}
\begin{tikzpicture}[scale=0.04]

\draw (10,50) -- (30,50) -- (50, 50) -- (50,30) -- (30,30) -- (10,30);
\draw (10,50) -- (30,30) -- (50, 50);
\draw (10,30) -- (30,50) -- (50,30);
\draw (30,30) -- (30,10);

\node [v] at (10,50) {};
\node [v] at (30,50) {};
\node [v] at (50,50) {};
\node [v] at (10,30) {};
\node [v] at (30,30) {};
\node [v] at (50,30) {};
\node [v] at (30,10) {};

\draw (10,55) node{$v_1$};
\draw (30,55) node{$v_2$};
\draw (50,55) node{$v_3$};
\draw (10,25) node{$v_4$};
\draw (25,25) node{$v_5$};
\draw (50,25) node{$v_6$};
\draw (30,5) node{$v_7$};

\draw (10,5) node{$G$};

\end{tikzpicture}\qquad\quad
\begin{tikzpicture}[scale=0.04]

\path [fill=gray!20] (0,40) ellipse (10 and 15);
\path [fill=gray!20] (30,40) ellipse (14 and 15);
\path [fill=gray!20] (60,40) ellipse (10 and 15);
\path [fill=gray!20] (30,10) ellipse (12 and 10);

\draw (-5, 50) -- (5,40) -- (-5,30);
\draw (20,40) -- (30,50) -- (40,40) -- (30,30) -- (20,40);
\draw (55,40) -- (65,30) -- (65,50) -- (55,40);
\draw (30,15) -- (20,5) -- (40,5);

\draw[photon] (5,40) -- (20, 40);
\draw[photon] (40,40) -- (55, 40);
\draw[photon] (30,30) -- (30, 15);

\draw (5,35) node{$a$};
\draw (20,35) node{$b$};
\draw (35,30) node{$c$};
\draw (35,15) node{$d$};
\draw (40,35) node{$e$};
\draw (55,35) node{$f$};

\node [v] at (-5,50) {};
\node [v] at (30,50) {};
\node [v] at (65,50) {};
\node [v] at (-5,30) {};
\node [v] at (30,30) {};
\node [v] at (65,30) {};

\node [v] at (5,40) {};
\node [v] at (20,40) {};
\node [v] at (40,40) {};
\node [v] at (55,40) {};

\node [v] at (30,15) {};
\node [v] at (20,5) {};
\node [v] at (40,5) {};

\draw (65,5) node{$D$};

\draw (-5,55) node{$v_1$};
\draw (30,55) node{$v_2$};
\draw (65,55) node{$v_3$};
\draw (-5,25) node{$v_4$};
\draw (15,5) node{$v_5$};
\draw (65,25) node{$v_6$};
\draw (45,5) node{$v_7$};

\end{tikzpicture}\qquad
\begin{tikzpicture}[scale=0.04]

\path [fill=gray!20] (0,40) ellipse (10 and 15);
\path [fill=gray!20] (30,40) ellipse (14 and 15);
\path [fill=gray!20] (60,40) ellipse (10 and 15);
\path [fill=gray!20] (30,10) ellipse (12 and 10);

\draw (-5, 50) -- (5,40) -- (-5,30) -- (-5,50);
\draw (20,40) -- (30,50) -- (40,40) -- (30,30) -- (20,40);
\draw (55,40) -- (65,30);
\draw (65,50) -- (55,40);
\draw (30,15) -- (20,5) -- (40,5);
\draw (20,40) -- (40,40);

\draw[photon] (5,40) -- (20, 40);
\draw[photon] (40,40) -- (55, 40);
\draw[photon] (30,30) -- (30, 15);

\draw (5,35) node{$a$};
\draw (20,35) node{$b$};
\draw (35,30) node{$c$};
\draw (35,15) node{$d$};
\draw (40,35) node{$e$};
\draw (55,35) node{$f$};

\node [v] at (-5,50) {};
\node [v] at (30,50) {};
\node [v] at (65,50) {};
\node [v] at (-5,30) {};
\node [v] at (30,30) {};
\node [v] at (65,30) {};

\node [v] at (5,40) {};
\node [v] at (20,40) {};
\node [v] at (40,40) {};
\node [v] at (55,40) {};

\node [v] at (30,15) {};
\node [v] at (20,5) {};
\node [v] at (40,5) {};

\draw (65,5) node{$D* v_2$};

\draw (-5,55) node{$v_1$};
\draw (30,55) node{$v_2$};
\draw (65,55) node{$v_3$};
\draw (-5,25) node{$v_4$};
\draw (15,5) node{$v_5$};
\draw (65,25) node{$v_6$};
\draw (45,5) node{$v_7$};

\end{tikzpicture}
\caption{From a split decomposition $D$ of a graph $G$, we obtain a
  split decomposition $D* v_2$ of $G*v_2$. Note
  that $v_2$ is represented by
  $v_2,a,f$ in $D$.}\label{fig:loconsd}
\end{figure}
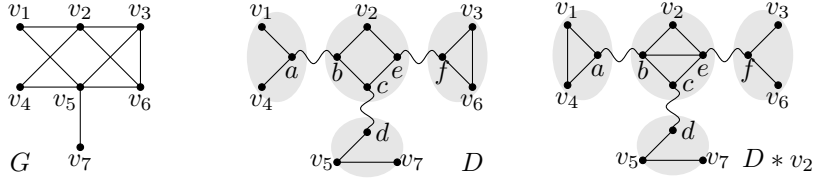 

\begin{LEM}[Bouchet \cite{Bouchet88}]\label{lem:canonicallocal}
	If $D$ is the canonical decomposition of a connected graph $G$ and $v\in V(G)$, 
	then $D* v$ is the canonical decomposition of $G* v$. 
\end{LEM}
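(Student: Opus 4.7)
The plan is to establish in two stages that $D*v$ is (a) a split decomposition of $G*v$, and (b) satisfies the three defining conditions of a canonical decomposition.

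For stage (a), I would apply Lemma \ref{lem:represent}: it suffices to check that for any two unmarked vertices $x, y$, they are linked in $D*v$ if and only if $xy \in E(G*v)$. Since $G*v$ differs from $G$ only in edges among neighbors of $v$, and since the operation $B \mapsto B*w$ applied in every bag with a representative $w$ of $v$ precisely toggles those edges which, via alternating paths, contribute to adjacencies among $v$'s neighbors in $G$, a routine case analysis on whether $x, y \in N_G(v)$ yields the equivalence.

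For stage (b), the first key ingredient is the bag-type behavior under local complementation at a representative $w$: prime bags remain prime (local complementation preserves cut-rank by Proposition \ref{prop:leq}, and hence splits), complete bags on at least three vertices become stars centered at $w$, stars centered at $w$ become complete, and stars with $w$ as a leaf are unchanged. This immediately gives condition (i). The second key ingredient is the rigidity of representatives: each bag $B$ of $D$ contains at most one representative of $v$; when $v \notin B$, this representative is the marked vertex in $B$ at the end of the marked edge on the bag-tree-path from $v$'s bag to $B$, and its existence requires that the alternating path propagate through every intermediate bag. In a star bag, such propagation can only happen by passing through the center; in a complete bag, any two marked vertices suffice; in a prime bag, it depends on the specific adjacency.

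For condition (ii), I would argue by contradiction. Suppose two complete bags $C_1', C_2'$ in $D*v$ are neighbors via a marked edge $xy$. By the type-transformation rules, each $C_i'$ either was already complete with no representative, or was originally a star centered at its representative $w_i$. The case where both are unchanged contradicts condition (ii) of $D$. In the remaining cases, at least one side, say $B_1$, was a star centered at $w_1$; the rigidity analysis then forces either $w_1 = x$ (the path enters $B_1$ via $xy$, pushing a representative onto $B_2$ whose star origin makes $y$ its center) or $w_1 \ne x$ (so $x$ is a leaf of $B_1$, and the path extends through $xy$, again making $y$ the center of $B_2$). In every subcase, $xy$ in $D$ connects a star-leaf to a star-center, violating condition (iii) of $D$. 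An analogous but longer case analysis establishes condition (iii), by enumerating the three possible origins (unchanged star, complete becoming star, star-with-leaf-representative unchanged) of each of the two adjacent star bags in $D*v$ and tracking representative propagation through $xy$. The main obstacle is precisely this bookkeeping: one must correctly identify the representative in each bag, locate it relative to the connecting marked edge, and invoke the star-center propagation rule to pin each putative violation of (ii) or (iii) in $D*v$ to a violation of (ii) or (iii) in $D$.
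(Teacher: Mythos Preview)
The paper does not prove this lemma; it is quoted from Bouchet~\cite{Bouchet88} and used as a black box. There is therefore no ``paper's own proof'' to compare against.

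As for the soundness of your sketch: the overall architecture is the right one, and stage~(a) together with the bag-type analysis in stage~(b)(i) are correct. The case analysis for condition~(ii) is essentially right but contains a slip. In the sub-case $w_1 = x$ (the representative in $B_1$ coincides with the endpoint of the marked edge $xy$), the alternating path from $v$ enters $B_1$ at $x$ via the marked edge $xy$, so $B_2$ lies between $v$'s bag and $B_1$ in the bag tree; the representative in $B_2$ is then the vertex $z$ where the path enters $B_2$, not $y$ as you write (indeed $y$ is at odd distance from $v$). Since $B_2$ becomes complete in $D*v$ and carries a representative, $B_2$ was a star centered at $z$, so $y$ is a \emph{leaf} of $B_2$. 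The marked edge $xy$ thus joins the center $x$ of star $B_1$ to the leaf $y$ of star $B_2$, which still violates condition~(iii) of $D$---so your final conclusion survives, but via the mirror configuration. With this correction your argument for~(ii) goes through, and the analogous bookkeeping for~(iii), while tedious, presents no further conceptual obstacle.
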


A graph $G$ is \emph{distance-hereditary}~\cite{BM1986}
if for each connected induced subgraph $H$ of $G$ and two distinct vertices $x,y$ in $H$, their distance in $H$ is the same as in $G$. 
It is known that connected distance-hereditary graphs are exactly the graphs
having the canonical decomposition whose bags are either star or
complete~\cite{Bouchet88}. 
It is easy to see that every block graph is distance-hereditary~\cite{BM1986}.

\subsection{Canonical decompositions of block graphs}

	A \emph{diamond graph} is the graph obtained from $K_4$ by
        removing one edge.
        By definition, neither a diamond graph nor $C_k$ for
        $k\ge 4$ is a
        block graph.
        Actually 
        Bandelt and Murder~\cite{BM1986}
        showed that a graph is a block graph if and only if it has no
        induced subgraph isomorphic to a diamond  graph or $C_k$ for
        $k\ge 4$. 
        In the following proposition, we will characterize block graphs from their canonical
        decompositions.
	\begin{PROP}\label{prop:charblock2}
          Let $D$ be the canonical decomposition of a connected
          graph $G$. 
	Then $G$ is a block graph if and only if 
        every bag of $D$ is either star or complete,
        and 
	the center of each star bag of $D$  is unmarked.
	\end{PROP}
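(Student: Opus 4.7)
\emph{Proof proposal.} My plan is to combine the Bandelt--Mulder characterization of block graphs (as distance-hereditary graphs with no induced $C_4$ or diamond) with direct pivot computations along the marked edges of $D$.

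For the forward direction, because a block graph is distance-hereditary, the cited characterization of distance-hereditary graphs by their canonical decomposition immediately gives that every bag of $D$ is star or complete. To show the centre of each star bag is unmarked, I argue by contradiction: suppose some star bag $B$ has marked centre $c$, and let $cc'$ be the unique marked edge at $c$, with $c'\in V(B')$. An essential arithmetic input is that every bag in any split decomposition has at least three vertices, since a simple decomposition requires a split $(A,B)$ with $|A|,|B|\ge 2$ and produces two new bags of sizes $|A|+1,|B|+1\ge 3$. If $B'$ is a star, condition~(iii) forces $c'$ to be its centre, so $M':=N_D(c')\setminus\{c\}$ is the independent leaf set of $B'$, of size at least $2$; if $B'$ is complete, then $|V(B')|\ge 3$, so $M'$ is a clique of $B'$ of size at least $2$. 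Recomposing $cc'$ yields a split decomposition of $G$ whose component on $(V(B)\cup V(B'))\setminus\{c,c'\}$ is, by Lemma~\ref{lem:induced}, isomorphic to an induced subgraph of $G$; a direct computation of $D\pivot cc'$ shows that this component has the leaf set $L$ of $B$ independent, $M'$ unchanged, and $L\times M'$ complete bipartite. Picking two vertices from each of $L$ and $M'$ then produces an induced $C_4$ (when $B'$ is a star) or an induced diamond (when $B'$ is complete), either of which contradicts that $G$ is a block graph.

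For the backward direction, assuming the stated structure of $D$, I show that every block of $G$ is a clique. The structural picture I set up is that each complete bag $B$ of $D$ gives rise to a block of $G$, namely the set consisting of the unmarked vertices of $B$ together with, for each marked vertex $v_i\in V(B)$, the centre $c_i$ of the neighbouring star bag reached through the marked edge at $v_i$; condition~(ii) forces the bag on the other side of $v_i$ to be a star, so by hypothesis $c_i$ is unmarked and represents $v_i$ via the length-$2$ alternating path $c_i-v_i'-v_i$. A routine alternating-path calculation then shows that this set is a clique of $G$. That these cliques are actually blocks rests on a transit-blocking observation: because each star-bag centre is unmarked it has no incident marked edge, so once an alternating path arrives at a star centre along an unmarked edge it cannot continue, and hence each star centre separates the subtrees hanging off its bag in~$G$. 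The remaining blocks of $G$ are $K_2$ bridges: by~(iii), two adjacent star bags are joined leaf-to-leaf by a marked edge and produce a bridge between their two centres in~$G$, and each unmarked leaf of a star bag produces a bridge to its centre.

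The main obstacle will be the transit-blocking step: carefully verifying that the unmarked-centre hypothesis really does prevent long alternating paths from creating a spurious edge of $G$ that would fuse two complete-bag cliques into a larger $2$-connected subgraph. Once this is in place, the forward direction reduces to the two compact pivot calculations above and the backward direction is essentially a clean reading of the block structure from the bag tree.
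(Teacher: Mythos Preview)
Your forward direction is essentially identical to the paper's: both reduce first to the distance-hereditary case, then suppose a star bag has a marked centre, recompose along that marked edge, and exhibit an induced diamond (when the neighbouring bag is complete) or an induced $C_4$ (when the neighbouring bag is a star whose centre is the other endpoint, forced by condition~(iii)).  Your explicit remark that every bag in a multi-bag split decomposition has at least three vertices is precisely what the paper uses implicitly when it asserts that the recomposed bag contains the forbidden induced subgraph.

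Your backward direction is correct but takes a genuinely different route.  The paper does \emph{not} analyse alternating paths; instead it strengthens the statement to arbitrary \emph{split} decompositions satisfying the two bag hypotheses and proves it by induction on $|V(D)|$: pick a star bag with (unmarked) centre $v$; if it has an unmarked leaf, strip that pendant vertex and apply induction; otherwise every leaf is marked, the centre $v$ identifies the graphs coming from the subtrees, each of which is a block graph by induction, and gluing block graphs at a single vertex is again a block graph.  Passing to general split decompositions is what makes the induction close, since deleting a vertex from a bag need not preserve canonicity.

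Your approach instead reads the block--cut structure of $G$ directly off the bag tree: the unmarked-centre hypothesis makes every alternating walk through a star bag terminate at the centre, so each star centre is a cut vertex of $G$; the edges of $G$ then fall into the cliques $K_B$ you associate to complete bags together with the two kinds of $K_2$ bridges, and each of these is a block.  This buys you more --- you obtain the blocks and cut vertices of $G$ explicitly, in the spirit of Proposition~\ref{prop:cliquesplit} --- at the price of having to verify that your three edge types exhaust $E(G)$ and that each $K_B$ is maximal.  Both follow from your transit-blocking observation, but writing this out carefully is longer than the paper's five-line induction.  One terminological slip: in the paper ``represents'' is defined only for $v\in V(G)$, so saying $c_i$ represents the marked vertex $v_i$ is an abuse; what you mean is simply that $c_i$ is the unmarked vertex linked (in the sense of Lemma~\ref{lem:represent}) to every unmarked neighbour of $v_i$ in $B$.
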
	
	\begin{proof}
          We may assume that $G$ is distance-hereditary because
          otherwise $D$ has a bag that is neither star nor complete,
          and $G$ is not a block graph.

	We first suppose that 
        $D$ has a star bag $B$ having a marked center $w$.
        There exists a marked edge $ww'$ joining $B$ with a
        bag $B'$.
	Since $D$ is a canonical decomposition,
	$B'$ is 
        either complete or 
        star with the center $w'$.
	If $B'$ is complete,
	then by recomposing $ww'$ we obtain a bag which has an induced
        subgraph isomorphic to a diamond graph.
	Thus $G$ has an induced subgraph isomorphic to a diamond graph
        by Lemma~\ref{lem:induced}. Since a diamond graph is not a block
        graph, we deduce that $G$ is not a block graph.
	If $B'$ is a star bag with the center $w'$,
	then by recomposing $ww'$, we obtain a bag which has an
        induced subgraph isomorphic to $C_4$. 
	By Lemma~\ref{lem:induced}, $G$ should have an induced
        subgraph isomorphic to $C_4$, and therefore $G$ is not a block
        graph.
        
        To prove the converse, we claim a stronger statement:
        if $D$ is a \emph{split} decomposition  of a connected graph $G$ 
        whose bags are star or complete and no center of a star bag in
        $D$ is marked, then $G$ is a block graph.
        We proceed by induction on $\abs{V(D)}$.
        We may assume that $D$ has a star bag $B$ because otherwise
        $G$ is a complete graph. 
        Let $v$ be the center of $B$.
        If $B$ has another unmarked vertex $w$, then let $G'$ be a
        graph obtained  by recomposing all marked  edges in
        $D\setminus w$. Here $G$ is obtained from $G'$ by adding a
        pendant vertex $w$ to $v$. By the induction hypothesis, $G'$
        is a block graph and so is $G$.
        We may now assume that every vertex in $B$ other than $v$ is
        marked.
        Let $B=\{v, v_1,v_2,\ldots,v_n\}$ and let
        $v_1w_1,v_2w_2,\ldots,v_nw_n$ be the marked edges incident with
        $B$.
        Let $D_i$ be the component of $D\setminus V(B)$ containing
        $w_i$.
        By the induction hypothesis, 
        the graph $G_i$ obtained by recomposing all marked edges in
        $D_i$ is a block graph.
        The graph $G$ is obtained from $G_1,G_2,\ldots,G_n$ by identifying
        $w_1,w_2,\ldots,w_n$ with a new vertex  $v$. Since each block of $G$ is a
        block of $G_i$ for some $i$, we deduce that $G$ is a block graph.
	\end{proof}

        We now characterize block graphs without simplicial vertices
        of degree at least $2$ in terms of their canonical
        decompositions. 
        \begin{PROP}\label{prop:cliquesplit}
          Let $D$ be the canonical decomposition of a connected block
          graph $G$.
          Then $G$ has a simplicial vertex of degree at least $2$
          if and only if 
          $D$ has a complete bag $B$ having more than $2$ vertices
          containing an unmarked vertex.
        \end{PROP}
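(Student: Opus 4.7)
The plan is to analyze, for each vertex $v$ of $G$, the unique bag of $D$ in which $v$ appears as an unmarked vertex, call it $B_v$, and to characterize $v$ being simplicial of degree at least two in terms of the type and size of $B_v$. By Proposition~\ref{prop:charblock2}, each bag of $D$ is star or complete, and every star bag has an unmarked center. Adjacency in $G$ is governed by Lemma~\ref{lem:represent}: each neighbor of $v$ in $G$ corresponds to an alternating path in $D$ starting at $v$ with an unmarked edge to a vertex of $B_v$.

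For the forward direction, I assume $v$ is simplicial with $\deg_G(v)\ge 2$. For each neighbor $w$ of $v$ in $G$, let $x_w\in B_v$ be the second vertex of the alternating path from $v$ to $w$. I first argue the $x_w$'s are pairwise distinct: if $x_w$ is unmarked then $x_w=w$, and if $x_w$ is marked then its unique incident marked edge forces the rest of the path, so distinct $w$'s give distinct $x_w$'s. Hence $\abs{B_v}\ge \deg_G(v)+1\ge 3$. I then rule out $B_v$ being a star. If $v$ were a leaf of $B_v$, then $v$ would have only the (unmarked) center as a neighbor in $G$, contradicting $\deg_G(v)\ge 2$. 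If $v$ were the center, pick two distinct neighbors $w_1,w_2$ of $v$, giving distinct leaves $x_{w_1},x_{w_2}$ of $B_v$; using that bags of $D$ are joined by marked edges in a tree-like fashion (so there is no alternative alternating route between two vertices of a common bag), every alternating path between $x_{w_1}$ and $x_{w_2}$ is forced to traverse $v$ via two consecutive unmarked edges and therefore fails to alternate. Thus $x_{w_1}$ and $x_{w_2}$ are not linked, so $w_1w_2\notin E(G)$, contradicting simpliciality. Hence $B_v$ is complete of size at least $3$.

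For the backward direction, suppose $B$ is a complete bag with $\abs{B}\ge 3$ containing an unmarked vertex $v$, and write $B=\{v,u_1,\ldots,u_{k-1}\}$. Each $u_i$ produces a distinct neighbor $w_i$ of $v$ in $G$ (namely $u_i$ itself if unmarked, or the unmarked endpoint of the alternating continuation starting with the unique marked edge at $u_i$ if marked), by the same uniqueness of continuations as above; hence $\deg_G(v)\ge k-1\ge 2$. For $i\ne j$, I splice the alternating path from $w_i$ to $u_i$ with the unmarked edge $u_iu_j$ of the complete bag $B$ and then with the alternating path from $u_j$ to $w_j$. The alternating pattern is preserved at both splice points because $u_iu_j$ is an unmarked edge sandwiched between the marked edges $u_iu_i'$ and $u_ju_j'$ leaving $B$ (with the splice being trivial when one of $u_i,u_j$ is unmarked). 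By Lemma~\ref{lem:represent}, $w_iw_j\in E(G)$, so $N_G(v)$ is a clique and $v$ is simplicial.

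The main obstacle is the star-bag subcase in the forward direction: I must carefully invoke the tree-like arrangement of bags joined by marked edges to exclude the existence of an alternative alternating path between two leaves of a star bag that avoids the center, since otherwise one could imagine such a path witnessing adjacency of the would-be neighbors. Once this is ruled out, both directions reduce to a clean bookkeeping of alternating paths emanating from $v$ through $B_v$.
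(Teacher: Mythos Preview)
Your alternating-path strategy is workable, but the injectivity claim ``distinct $w$'s give distinct $x_w$'s'' has a real gap. The unique marked edge at $x_w$ only fixes the \emph{next} vertex $x_w'$; after that the path must follow an unmarked edge inside the bag containing $x_w'$, and there may be several. In particular, if $B_v$ is a star bag with center $v$ and some marked leaf $x$ borders a \emph{complete} bag $B'$, then from the marked endpoint in $B'$ there are $\lvert B'\rvert-1\ge 2$ unmarked continuations, so several distinct neighbours of $v$ share $x_w=x$. Thus injectivity fails precisely in the star case you are trying to rule out, and your selection of $w_1,w_2$ with $x_{w_1}\ne x_{w_2}$ is not justified as written. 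This is easily repaired: $\lvert B_v\rvert\ge 3$ is automatic for bags of the canonical decomposition once $\lvert V(G)\rvert\ge 3$, and for the star subcase you should instead pick two distinct leaves of $B_v$ first and then choose one neighbour of $v$ through each. In the backward direction the injectivity \emph{does} hold, but only because $B$ is complete: by condition~(ii) of the canonical decomposition and Proposition~\ref{prop:charblock2} every bag adjacent to $B$ is a star whose center is unmarked, so the marked endpoint is a leaf with a unique unmarked edge to that center; you should invoke this explicitly, and also note that it gives $N_G(v)=\{w_1,\ldots,w_{k-1}\}$ rather than merely $N_G(v)\supseteq\{w_1,\ldots,w_{k-1}\}$, which you need for simpliciality.

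For comparison, the paper's proof bypasses all alternating-path bookkeeping via Lemma~\ref{lem:induced}. Forward: if $v$ were the center of a star bag, the induced copy of that bag in $G$ gives $v$ two non-adjacent neighbours, contradicting simpliciality; if $v$ were a leaf of a star bag, its only neighbour there is the (unmarked) center, forcing $\deg_G(v)=1$. Backward: since every bag adjacent to the complete bag $B$ is a star with unmarked center, the neighbours of $v$ in $G$ are exactly the other unmarked vertices of $B$ together with those centers, and these are pairwise linked through $B$. This is shorter and sidesteps the injectivity issue entirely.
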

        \begin{proof}
          Suppose that $v\in V(G)$ is a simplicial vertex of degree at least
          $2$ in $G$.
          Clearly $v$ is not a center of a star bag of $D$ by
          Lemma~\ref{lem:induced}.
          Because the center of a star bag is unmarked by
          Proposition~\ref{prop:charblock2}
          and $v$ has degree at least $2$, 
          $v$ cannot belong to a star bag.
          So $v$ is in a complete bag of $D$.

          Conversely suppose that $D$ has a complete bag $B$ having
          more than $2$ vertices containing an unmarked vertex $v$.
          By Lemma~\ref{lem:induced}, 
          the degree of $v$ is at least $2$.
          Since all neighboring bags of $B$ are star bags whose centers
          are unmarked by Proposition~\ref{prop:charblock2}, 
          $v$ is a simplicial vertex of $G$.
        \end{proof}

\subsection{Generalizing a theorem of Bouchet}

	Now we are ready to prove Theorem~\ref{thm:blockbou}.
        This theorem is best possible for block graphs, because
        if $v$ is a simplicial vertex of a block graph $G$, 
        then $G*v$  is also a block graph.
	\begin{thmext}
	If two block graphs without simplicial vertices of degree at
        least $2$ are locally equivalent, %
	then they are isomorphic. 
	\end{thmext}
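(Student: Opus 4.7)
The plan is to reduce the theorem to a rigidity statement about canonical decompositions, using the two characterization propositions proved just above. Since local complementation preserves connected components, I would first reduce to the case that $G$ and $H$ are connected by matching up components. Let $D_G$ and $D_H$ denote their canonical decompositions, unique by Lemma~\ref{lem:canonicaleq}. Because $G$ and $H$ are locally equivalent, iterating Lemma~\ref{lem:canonicallocal} shows $D_H$ is obtained from $D_G$ by a sequence of operations of the form $D \mapsto D * v$. Crucially, each such operation merely replaces every affected bag $B$ by $B * w$ for the representative $w$, so it preserves the bag partition of vertices, the marked edges between bags, and the marked/unmarked label of every vertex. Thus $D_G$ and $D_H$ share the same bag tree with identical vertex labels, differing at most in the internal edges of individual bags.

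Next, I would invoke Propositions~\ref{prop:charblock2} and~\ref{prop:cliquesplit}: together they imply that in the canonical decomposition of any block graph without simplicial vertex of degree at least $2$, every bag is either a $K_2$, a complete bag $K_n$ with $n \ge 3$ whose every vertex is marked, or a star bag with an unmarked center. Once the marks of a bag are fixed, this essentially determines its type---size-$2$ bags are $K_2$, size-$\ge 3$ all-marked bags are $K_n$, and size-$\ge 3$ bags with at least one unmarked vertex must be stars whose center is some unmarked vertex. The only possible discrepancy between $D_G$ and $D_H$ is therefore in star bags containing at least two unmarked vertices, where $D_G$ and $D_H$ may pick different unmarked vertices as center.

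To conclude, I would build a marked-graph isomorphism $\phi \colon D_G \to D_H$ bag by bag. On any bag whose structure is completely pinned down by its marks, $\phi$ acts as the identity. On a star bag $B$ with $D_G$-center $c$ and $D_H$-center $d \ne c$, I would take $\phi$ to transpose $c$ and $d$ while fixing all other vertices of $B$; since both $c$ and $d$ are unmarked and play symmetric roles relative to center/leaf in a star, this is a marked-graph isomorphism of $B$. Because each unmarked vertex lies in a unique bag and each bag-level transposition fixes all marked vertices, these maps assemble into a global bijection that preserves every marked edge between bags together with every within-bag edge, yielding an isomorphism $D_G \to D_H$; recomposition then gives $G \cong H$. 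The main technical hurdle I foresee is the careful verification that $D \mapsto D*v$ genuinely preserves the bag partition and vertex labels---this demands unpacking the definition of ``$w$ represents $v$'' and tracing which bags are affected, especially because a single local complementation can modify several bags at once through the chains of alternating paths.
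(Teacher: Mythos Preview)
Your proposal is correct and follows essentially the same route as the paper: both arguments use Lemma~\ref{lem:canonicallocal} to conclude that $D_G$ and $D_H$ share the same bag tree with identical marked/unmarked labels, then invoke Propositions~\ref{prop:charblock2} and~\ref{prop:cliquesplit} to force each bag to have the same type (complete vs.\ star) on both sides, yielding a marked-graph isomorphism $D_G\cong D_H$ and hence $G\cong H$. Your explicit bag-by-bag construction of the isomorphism---in particular the transposition for star bags with two unmarked vertices---is just a more detailed version of the paper's one-line claim that ``the unmarked vertices of $B$ in $D_G$ must be mapped to the unmarked vertices of $B'$ in $D_H$.''
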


	\begin{proof}%
	Suppose that two block graphs $G$ and $H$ without simplicial
        vertices of degree at least $2$ are locally equivalent. %
	Let $D_G$ and $D_H$ be the canonical decompositions of $G$ and
        $H$, respectively.
	We may assume that $\abs{V(G)}=\abs{V(H)}\ge 3$
        and therefore each bag  of $D_G$ or $D_H$ has at least $3$ vertices.

	Since $G$ and $H$ are locally equivalent, 
	by Lemma~\ref{lem:canonicallocal} 
	we assume that $D_H$ is obtained from $D_G$ by a sequence of local complementations.
	Note that applying local complementation in a split decomposition does not change the number of marked vertices and unmarked vertices in each bag.
	
        Suppose that a bag $B$ of $D_G$ corresponds to a bag
        $B'=D_H[V(B)]$ of $D_H$.
        If $B$ is a complete bag in $D_G$, then
        by Proposition~\ref{prop:cliquesplit},
        $B$ has no unmarked vertex in $D_G$ and therefore
        $B'$ has no unmarked vertex in $D_H$.
        Since every star bag of $D_H$ should have at least one unmarked vertex by Proposition~\ref{prop:charblock2},
	$B'$ is a complete bag in $D_H$. 
	Similarly, if $B'$ is a complete bag in $D_H$,  then $B$ is a
        complete bag in $D_G$. 

        Thus $B$ is a star bag of $D_G$ if and only if $B'$ is a star bag of $D_H$.
        By Proposition~\ref{prop:charblock2},
	the center of a star bag in $D_G$ or $D_H$
        is an unmarked vertex.
	Since a bag $B$ in $D_G$ and $B'$ in $D_H$
have the same number of neighbor bags and unmarked vertices in each canonical decomposition, 
	the unmarked vertices of $B$ in $D_G$ must be mapped to the
        unmarked vertices of $B'$ in
        $D_H$.
	Therefore, $D_G$ is isomorphic to $D_H$ and so $G$ is isomorphic to $H$.
	\end{proof}

\section{The number of non-isomorphic graphs in $\Delta_k$} \label{sec:size}

In this section, we will prove that 
$\Delta_k$ 
has at least $2^{\Omega(3^k)}$ pairwise non-isomorphic graphs.
A \emph{rooted graph} is a  pair of a graph and a specified vertex called a
\emph{root}.
Two rooted graphs $(G,v)$ and $(G',v')$ are \emph{isomorphic}
if 
there exists a graph
isomorphism $\phi$ from $G$ to $G'$ that maps $v$ to $v'$.
Let us write $\Aut(G)$ to denote the automorphism group of a graph
$G$.
For a rooted graph $(G,v)$, we write $\Aut(G,v)$ to 
denote the automorphism group of $(G,v)$.
In other words,  
$\Aut(G,v)=\{\phi\in \Aut(G): \phi(v)=v\}$.

First we show that each graph in $\Delta_k$ has a unique main triangle.
\begin{LEM}\label{lem:maintri}
  Let $k\ge 1$ and $G\in \Delta_k$.
  Then $G$ has a unique cycle $v_1v_2v_3$ of length $3$
  such that $G\setminus v_1v_2\setminus v_2v_3\setminus v_3v_1$ has
  exactly three components $G_1,G_2,G_3$, each of which is in $\Delta_{k-1}$.
\end{LEM}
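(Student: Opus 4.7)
The existence of such a triangle is immediate from the construction: if $G$ arises as a delta composition of $G_1, G_2, G_3 \in \Delta_{k-1}$ with main triangle $v_1v_2v_3$, then deleting those three edges leaves exactly the disjoint union $G_1 \cup G_2 \cup G_3$. For uniqueness I will combine induction on $k$ with a vertex-counting argument; note first that a straightforward induction on the construction shows $\abs{V(H)} = 2 \cdot 3^j$ for every $H \in \Delta_j$, since delta composition introduces no new vertices.

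Fix a main triangle $T = v_1v_2v_3$ of $G$ arising from some decomposition and suppose $T' = u_1u_2u_3$ is another triangle of $G$ with the desired disconnection property. By Lemma~\ref{lem:csblock} (more precisely, by the observation in its proof that every block of $G$ is isomorphic to $K_2$ or $K_3$), both $T$ and $T'$ are $K_3$-blocks. Since two distinct blocks share at most one vertex, if $T \ne T'$ then $\abs{V(T) \cap V(T')} \le 1$: sharing two vertices $v_1, v_2$ would force the third vertex of $T'$ to lie in $N_G(v_1) \cap N_G(v_2)$, which equals $\{v_3\}$ because the only edges between $V(G_1)$ and $V(G_2)$ in $G$ are those of $T$. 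In the two remaining sub-cases I verify that $V(T') \subseteq V(G_i)$ for some $i$, which I may take to be $i = 1$: if $V(T) \cap V(T') = \emptyset$, then connectivity of $T'$ together with the fact that inter-$G_i$ edges appear only along $T$ forces $V(T') \subseteq V(G_i) \setminus \{v_i\}$; if instead $V(T) \cap V(T') = \{v_1\}$, then the other two vertices of $T'$ are neighbors of $v_1$ distinct from $v_2, v_3$, hence lie in $V(G_1)$.

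The final step is the vertex count. Since distinct blocks share no edges, removing $E(T')$ leaves the main triangle $T$ and both $G_2, G_3$ untouched, so the connected components of $G \setminus E(T')$ are in bijection with those of $G_1 \setminus E(T')$, with the component containing $v_1$ absorbing $V(G_2) \cup V(G_3)$ via $T$. For the disconnection property to hold I would need at least two ``small'' components (those not containing $v_1$), each belonging to $\Delta_{k-1}$ and thus having $2 \cdot 3^{k-1}$ vertices; but these small components are pairwise disjoint subsets of $V(G_1) \setminus \{v_1\}$, a set of size only $2 \cdot 3^{k-1} - 1$. Even a single such component cannot fit, yielding a contradiction and forcing $T' = T$.

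I expect the main obstacle to be the intersection case analysis that pins down $V(T') \subseteq V(G_i)$ whenever $T' \ne T$; once that reduction is cleanly in place, the vertex count closes the argument immediately, and no deeper use of the inductive hypothesis is required beyond the size formula $\abs{V(H)} = 2 \cdot 3^j$.
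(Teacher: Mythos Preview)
Your proof is correct. Both you and the paper exploit the count $\abs{V(H)}=2\cdot 3^{k-1}$ for $H\in\Delta_{k-1}$, but the routes differ. The paper argues directly without invoking the block structure: assuming a second decomposing triangle $T'$ exists, it picks a component $H=G_i$ of $G\setminus E(T)$ disjoint from $V(T')$, notes that the component $J$ of $G\setminus E(T')$ meeting $H$ must satisfy $V(H)\subseteq V(J)$ and $\abs{V(J)}=\abs{V(H)}$, hence $J=H$, and then derives a contradiction from the surviving edges $v_iv_j$ leaving $H$. You instead use Lemma~\ref{lem:csblock} to recognize $T$ and $T'$ as $K_3$-blocks, localize $T'$ inside a single $G_i$ via the block intersection property, and then count the ``small'' components inside $V(G_i)\setminus\{v_i\}$. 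Your version is longer and imports an extra lemma, but it makes every case explicit; the paper's version is shorter but leaves to the reader the small observation that some $G_i$ avoids $V(T')$ when $T\neq T'$.
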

\begin{proof}
  Clearly there is at least one such cycle because of the
  construction. Suppose there are two such cycles $T=v_1v_2v_3$ and
  $T'=v_1'v_2'v_3'$.
  Let $H$ be a component of $G\setminus v_1v_2\setminus
  v_2v_3\setminus v_3v_1$ having no vertex of $T'$. By the
  condition, $H\in \Delta_{k-1}$ and so $H$ has exactly $2\cdot
  3^{k-1}$ vertices. We may assume $v_1\in V(H)$.
  The component $J$ of $G\setminus v_1'v_2'\setminus
  v_2'v_3'\setminus v_3'v_1'$ intersecting $V(H)$ should be equal to
  $H$ because $T'$ does not intersect $H$ and
  $\abs{V(J)}=\abs{V(H)}$. Thus $v_2,v_3\in T'$ and so $v_2$ and $v_3$
  have a common neighbor other than $v_1$. However, this contradicts
  our assumption that $G\setminus v_1v_2\setminus v_2 v_3\setminus
  v_3v_1$ has exactly three components.
\end{proof}

Let $k\ge 2$ and let $G$ be a graph in $\Delta_k$.
By the construction, $G$ is a delta composition of three graphs $G_1,
G_2,G_3\in \Delta_{k-1}$ with the main triangle $v_1v_2v_3$
such that $v_i\in V(G_i)$ for $i=1,2,3$.
We call $G\in \Delta_k$ 
\begin{itemize}
\item \emph{Type-A} if $(G_1,v_1)$, $(G_2,v_2)$, and $(G_3,v_3)$
are pairwise isomorphic,
\item \emph{Type-B} if exactly two of $(G_1,v_1)$, $(G_2,v_2)$,
  $(G_3,v_3)$ are isomorphic, 
\item \emph{Type-C} otherwise.
\end{itemize}

\begin{LEM}\label{lem:fixauto}
  Let $k\ge 1$ and $G$ be a delta composition of three graphs
  $G_1,G_2,G_3\in \Delta_{k-1}$ with the main triangle $v_1v_2v_3$
  such that $v_i\in V(G_i)$ for all $i=1,2,3$. Then,
  \begin{enumerate}
  \item $\Aut(G)\simeq S_3\times \Aut(G_1,v_1)\times \Aut(G_2,v_2)\times
    \Aut(G_3,v_3)$
    if $G$ is Type-A.
  \item $\Aut(G)\simeq S_2\times \Aut(G_1,v_1)\times \Aut(G_2,v_2)\times
    \Aut(G_3,v_3)$
    if $G$ is Type-B.
  \item $\Aut(G)\simeq \Aut(G_1,v_1)\times \Aut(G_2,v_2)\times
    \Aut(G_3,v_3)$
    if $G$ is Type-C.
  \end{enumerate}
\end{LEM}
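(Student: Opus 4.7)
The plan is to use the uniqueness of the main triangle, established in Lemma~\ref{lem:maintri}, to decompose any automorphism of $G$ into a permutation of the three branches together with rooted isomorphisms between corresponding branches.

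First, I would argue that any $\phi\in\Aut(G)$ preserves the main triangle setwise. By Lemma~\ref{lem:maintri}, $v_1v_2v_3$ is the unique triangle of $G$ whose removal leaves exactly three connected components lying in $\Delta_{k-1}$. Thus $\phi$ must map $\{v_1,v_2,v_3\}$ to itself, inducing a permutation $\sigma=\sigma_\phi\in S_3$ via $\phi(v_i)=v_{\sigma(i)}$. Since $G_i$ is the component of $G\setminus v_1v_2\setminus v_2v_3\setminus v_3v_1$ containing $v_i$, we get $\phi(V(G_i))=V(G_{\sigma(i)})$, and therefore $\phi|_{G_i}$ is a rooted isomorphism from $(G_i,v_i)$ to $(G_{\sigma(i)},v_{\sigma(i)})$.

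Conversely, given any $\sigma\in S_3$ with $(G_i,v_i)\simeq(G_{\sigma(i)},v_{\sigma(i)})$ for all $i$, and any triple of rooted isomorphisms $\phi_i\colon(G_i,v_i)\to(G_{\sigma(i)},v_{\sigma(i)})$, the union $\phi=\bigcup_i\phi_i$ is an automorphism of $G$: it sends each triangle edge $v_iv_j$ to $v_{\sigma(i)}v_{\sigma(j)}$, which is still a triangle edge, and preserves the internal edges of each $G_i$ by construction. Fixing, for each pair with $(G_i,v_i)\simeq(G_j,v_j)$, a reference rooted isomorphism $\iota_{ij}$, every rooted isomorphism $(G_i,v_i)\to(G_{\sigma(i)},v_{\sigma(i)})$ has the form $\iota_{i,\sigma(i)}\circ\alpha$ for a unique $\alpha\in\Aut(G_i,v_i)$.

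The three cases then follow by recording which permutations $\sigma$ are admissible: in Type-A all three $(G_i,v_i)$ are mutually isomorphic, so every $\sigma\in S_3$ is admissible; in Type-B, say $(G_1,v_1)\simeq(G_2,v_2)\not\simeq(G_3,v_3)$, only $\sigma\in\{\mathrm{id},(1\,2)\}$ is admissible, giving a copy of $S_2$; in Type-C only $\sigma=\mathrm{id}$ works. The bijection just described identifies $\Aut(G)$ with $\{\text{admissible }\sigma\}\times\prod_{i=1}^3\Aut(G_i,v_i)$, and in each case the orders match the claim. The one subtlety I expect to be the main obstacle is the group structure: the natural decomposition is a semidirect product (a wreath product $\Aut(G_1,v_1)\wr S_3$ in Type-A), because conjugating a branch-automorphism by a triangle permutation transports it to another branch. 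For the counting in Section~\ref{sec:size} only the cardinalities $\abs{\Aut(G)}$ matter, so the symbol $\simeq$ in the statement should be read at the level of the canonical set bijection and the resulting order formula, which is exactly what the argument above delivers.
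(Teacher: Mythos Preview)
Your argument is correct and follows the same route as the paper: invoke Lemma~\ref{lem:maintri} to see that every automorphism preserves $\{v_1,v_2,v_3\}$ setwise, then factor into a permutation of the three branches composed with rooted automorphisms of each. Your caveat about the group structure being a semidirect (wreath) product rather than a genuine direct product is well taken---the paper's own proof is equally loose here, phrasing the conclusion via the quotient $\Aut(G)/\Gamma$, and indeed only the resulting cardinalities and orbit structure are used in Lemmas~\ref{lem:orbitineq} and~\ref{lem:orbit}.
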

\begin{proof}
  Let $g\in\Aut(G)$.
  By Lemma~\ref{lem:maintri}, $g(\{v_1,v_2,v_3\})=\{v_1,v_2,v_3\}$
  and therefore $g(V(G_1)), g(V(G_2)), g(V(G_3))\in\{V(G_1),V(G_2),V(G_3)\}$.
  So $\Aut(G)$ induces a subgroup $\Gamma$ of $S_3$ on $\{v_1,v_2,v_3\}$ based
  on the type of $G$. It is clear that $\Aut(G)/\Gamma$ is a
  composition of automorphism groups of three rooted graphs $(G_1,v_1)$, $(G_2,v_2)$ and $(G_3,v_3)$.
\end{proof}

	For a graph $G$ and $x\in V(G)$, we define the \emph{orbit} of $x$ in $G$ as the set
	\[
	\{ w\in V(G): w=f(x) \text{ for some automorphism $f$ of $G$}\},
	\]
	and we denote $\norb(G)$ as the number of all distinct orbits of $G$.
	For a rooted graph $(G,v)$ and $x\in V(G)$, we define the \emph{orbit} of $x$ in $(G,v)$ as the set
	\[
	\{ w\in V(G): w=f(x) \text{ for some automorphism $f$ of $(G,v)$}\},
	\]
	and we denote $\norb(G,v)$ as the number of all distinct orbits of $(G,v)$.

\begin{LEM}\label{lem:orbitineq}
  Let $k\ge 1$ and $G$ be a delta composition of three graphs
  $G_1,G_2,G_3\in \Delta_{k-1}$
  with the main triangle $v_1v_2v_3$ such that $v_i\in V(G_i)$ for all
  $i=1,2,3$. If $v\in V(G_1)$, 
  then 
  \[\norb(G,v)\ge \norb(G_1,v_1)+\norb(G_2,v_2).
  \]
\end{LEM}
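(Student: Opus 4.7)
The plan is to count orbits of $(G,v)$ separately inside $V(G_1)$ and inside $V(G_2)\cup V(G_3)$, and show these contribute at least $\norb(G_1,v_1)$ and $\norb(G_2,v_2)$ respectively.

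First I would observe that, by Lemma~\ref{lem:maintri}, every automorphism of $G$ preserves the main triangle $\{v_1,v_2,v_3\}$ setwise, and therefore permutes the vertex sets $V(G_1),V(G_2),V(G_3)$ (because $V(G_i)\setminus\{v_i\}$ are precisely the components of $G\setminus\{v_1v_2,v_2v_3,v_3v_1\}$ once one identifies the corner of the triangle each sits at). Since $v\in V(G_1)$, any $\phi\in\Aut(G,v)$ must fix $V(G_1)$ setwise, hence must fix $v_1$, and in particular must permute $\{v_2,v_3\}$.

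Next I would show that the restriction map $\phi\mapsto\phi|_{V(G_1)}$ sends $\Aut(G,v)$ onto $\Aut(G_1,v_1,v):=\{\psi\in\Aut(G_1):\psi(v_1)=v_1,\ \psi(v)=v\}$: every such $\psi$ extends to an automorphism of $(G,v)$ by fixing $V(G_2)\cup V(G_3)$ pointwise (this uses nothing more than the definition of delta composition). Consequently, the orbits of $(G,v)$ contained in $V(G_1)$ are precisely the orbits of $(G_1,v_1,v)$, and since these refine the orbits of $(G_1,v_1)$ we obtain at least $\norb(G_1,v_1)$ orbits inside $V(G_1)$.

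For the remaining part, I split into two cases. If $(G_2,v_2)\not\cong(G_3,v_3)$ as rooted graphs, then no $\phi\in\Aut(G,v)$ can swap $v_2$ with $v_3$, so $\phi$ fixes $V(G_2)$ setwise; extending any $\psi\in\Aut(G_2,v_2)$ by the identity on $V(G)\setminus V(G_2)$ shows the orbits of $(G,v)$ inside $V(G_2)$ are exactly the orbits of $(G_2,v_2)$, giving $\norb(G_2,v_2)$ orbits in $V(G_2)\cup V(G_3)$. If $(G_2,v_2)\cong(G_3,v_3)$, fix an isomorphism $\alpha$; then automorphisms of $(G,v)$ that fix $v_2$ yield the orbits of $(G_2,v_2)$ inside $V(G_2)$ and the orbits of $(G_3,v_3)$ inside $V(G_3)$, while automorphisms swapping $v_2,v_3$ simply pair each orbit in $V(G_2)$ with its $\alpha$-image in $V(G_3)$. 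Hence the number of orbits inside $V(G_2)\cup V(G_3)$ equals $\norb(G_2,v_2)=\norb(G_3,v_3)$, which is again at least $\norb(G_2,v_2)$. Adding the two contributions yields the desired inequality.

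The only mildly delicate point is the extension argument, which I would state once as a small lemma (any automorphism of $(G_i,v_i)$ extends to $(G,v)$ by acting trivially elsewhere) since the delta composition only glues $G_i$'s through the triangle $v_1v_2v_3$; the rest is bookkeeping between the two cases above.
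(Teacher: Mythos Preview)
Your argument is correct and follows essentially the same route as the paper: use Lemma~\ref{lem:maintri} to see that every $\phi\in\Aut(G,v)$ fixes $V(G_1)$ setwise and hence fixes $v_1$, identify the orbits of $(G,v)$ inside $V(G_1)$ with the orbits of $\{\psi\in\Aut(G_1):\psi(v)=v,\ \psi(v_1)=v_1\}$, and then bound the number of orbits in $V(G_2)\cup V(G_3)$ from below by $\norb(G_2,v_2)$. The only difference is packaging: the paper invokes Lemma~\ref{lem:fixauto} for the second count, whereas you spell out the two cases $(G_2,v_2)\cong(G_3,v_3)$ and $(G_2,v_2)\not\cong(G_3,v_3)$ directly, which amounts to the same thing.
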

\begin{proof}
  By Lemma~\ref{lem:maintri}, no vertex in $G_1$ can be mapped to a
  vertex in $G_2$ or $G_3$ by an automorphism of $G$ fixing $v$.
  Thus orbits of $(G,v)$ intersecting $V(G_1)$ cannot contain a
  vertex in $G_2$ or $G_3$.
    The number of orbits of $(G,v)$ intersecting $V(G_1)$ is equal
  to %
  the number of distinct subsets of $V(G_1)$ that can be represented as 
  \[
  \{f(x)\in V(G_1)\colon \text{$f$ is an automorphism of $G_1$
    such that $f(v)=v$, $f(v_1)=v_1$}\}
  \]
  for some $x\in V(G_1)$
  and this number is at least $\norb(G_1,v_1)$.
  The number of orbits of $(G,v)$ not intersecting $V(G_1)$ is at
  least $\norb(G_2,v_2)$ by Lemma~\ref{lem:fixauto}.
  Thus, we obtain the desired inequality.
\end{proof}
\begin{LEM}\label{lem:orbitineq2}
  Let $k$ be a non-negative integer and $G\in \Delta_k$ and $v\in V(G)$. 
  Then $(G,v)$ has at least $2^{k+1}$ orbits.
\end{LEM}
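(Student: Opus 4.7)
The plan is a straightforward induction on $k$, using Lemma~\ref{lem:orbitineq} as the main engine. The base case $k=0$ is immediate: $\Delta_0=\{K_2\}$, and for either choice of root $v\in V(K_2)$ the two vertices lie in distinct orbits of $(K_2,v)$ (the root itself, and the other vertex), giving $\norb(G,v)=2=2^{0+1}$.

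For the inductive step, assume the claim holds for $k-1$ and let $G\in\Delta_k$. By definition, $G$ is a delta composition of three graphs $G_1,G_2,G_3\in\Delta_{k-1}$ with main triangle $v_1v_2v_3$ where $v_i\in V(G_i)$. The vertex $v$ lies in some $V(G_i)$; by relabeling we may assume $v\in V(G_1)$. Applying Lemma~\ref{lem:orbitineq} directly yields
\[
\norb(G,v)\;\ge\;\norb(G_1,v_1)+\norb(G_2,v_2).
\]
Since $G_1,G_2\in \Delta_{k-1}$ and $v_1\in V(G_1)$, $v_2\in V(G_2)$, the induction hypothesis applied to each summand gives $\norb(G_1,v_1)\ge 2^{k}$ and $\norb(G_2,v_2)\ge 2^{k}$. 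Adding these bounds yields $\norb(G,v)\ge 2^{k}+2^{k}=2^{k+1}$, completing the induction.

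There is no real obstacle: the whole burden of the argument has already been carried by Lemma~\ref{lem:maintri} (uniqueness of the main triangle, which pins down the action of $\Aut(G)$ on the three branches), Lemma~\ref{lem:fixauto} (describing the automorphism group of a delta composition), and Lemma~\ref{lem:orbitineq} (turning those facts into the recursive inequality on orbit counts). The lower bound $2^{k+1}$ is exactly what a doubling recursion starting from $2$ produces, which is why Lemma~\ref{lem:orbitineq} bounds $\norb(G,v)$ by the sum of two — rather than one or three — smaller orbit counts: one summand comes from orbits confined to $V(G_1)$ containing $v$, and one from orbits in the remaining part, which is precisely what fuels the doubling.
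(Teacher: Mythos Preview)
Your proof is correct and matches the paper's approach exactly: the paper's proof reads in full ``Trivial if $k=0$. It follows easily by induction from Lemma~\ref{lem:orbitineq},'' and you have simply spelled out those two lines.
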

\begin{proof}
  Trivial if $k=0$. It follows easily by induction from Lemma~\ref{lem:orbitineq}.
\end{proof}

\begin{LEM}\label{lem:orbit}
Let $k$ be a positive integer and $G\in \Delta_k$.
\begin{enumerate}
\item If $G$ is Type-A, then $G$ has at
  least $2^k$ orbits.
\item If $G$ is Type-B, then $G$ has at
  least $2\cdot 2^k$ orbits.
\item If $G$ is Type-C, then $G$ has at
  least $3\cdot 2^k$ orbits.
\end{enumerate}
\end{LEM}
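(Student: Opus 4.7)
The plan is to decompose the vertex set of $G$ according to the three components $V(G_1)$, $V(G_2)$, $V(G_3)$ of the delta composition and count orbits on each piece separately, using the description of $\Aut(G)$ from Lemma~\ref{lem:fixauto} together with the lower bound on orbits of rooted pieces from Lemma~\ref{lem:orbitineq2}.

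First I would invoke Lemma~\ref{lem:maintri}: every automorphism $g$ of $G$ preserves the main triangle $\{v_1,v_2,v_3\}$ setwise, and therefore permutes the three sets $V(G_1)$, $V(G_2)$, $V(G_3)$ (mapping $V(G_i)$ to $V(G_j)$ implies $(G_i,v_i)\simeq (G_j,v_j)$). In particular, any $g\in\Aut(G)$ stabilizing $V(G_i)$ restricts to an automorphism of the rooted graph $(G_i,v_i)$, and conversely, by Lemma~\ref{lem:fixauto}, every automorphism of $(G_i,v_i)$ extends (by the identity on the other components) to an element of $\Aut(G)$. Hence the orbits of $\Aut(G)$ that lie entirely inside a single $V(G_i)$ are exactly the orbits of $(G_i,v_i)$, while orbits meeting two or more of the $V(G_j)$ arise only from the $S_3$- or $S_2$-action permitted by the type.

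Now I split into cases. For Type-C, no automorphism of $G$ swaps any two of the $V(G_i)$, so the orbits of $G$ are the disjoint union of the orbits of $(G_1,v_1)$, $(G_2,v_2)$, $(G_3,v_3)$, giving at least $3\cdot 2^k$ orbits by Lemma~\ref{lem:orbitineq2} applied to each $(G_i,v_i)\in\Delta_{k-1}$. For Type-B, where (say) $(G_2,v_2)\simeq (G_3,v_3)$ but $(G_1,v_1)$ is not isomorphic to them, no automorphism moves $V(G_1)$, so the orbits inside $V(G_1)$ are the orbits of $(G_1,v_1)$; meanwhile the $S_2$-swap glues each orbit of $(G_2,v_2)$ with the corresponding orbit of $(G_3,v_3)$, so orbits meeting $V(G_2)\cup V(G_3)$ are in bijection with orbits of $(G_2,v_2)$. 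This gives $\norb(G_1,v_1)+\norb(G_2,v_2)\ge 2\cdot 2^k$. For Type-A, the full $S_3$ merges the orbits of $(G_1,v_1)$, $(G_2,v_2)$, $(G_3,v_3)$ into triples corresponding under the chosen isomorphisms, so $\norb(G)=\norb(G_1,v_1)\ge 2^k$.

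The only subtlety, and the one I would be most careful about, is justifying that no additional merging of orbits inside a single $V(G_i)$ occurs beyond what $\Aut(G_i,v_i)$ already provides; this comes directly from the fact (established via Lemma~\ref{lem:fixauto}) that the stabilizer of $V(G_i)$ in $\Aut(G)$ acts on $V(G_i)$ precisely as $\Aut(G_i,v_i)$. Once this is in hand, the three cases follow immediately from Lemma~\ref{lem:orbitineq2}, and the bounds $2^k$, $2\cdot 2^k$, $3\cdot 2^k$ drop out.
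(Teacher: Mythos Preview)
Your proposal is correct and follows essentially the same route as the paper: both arguments use Lemma~\ref{lem:fixauto} to identify $\norb(G)$ with $\norb(G_1,v_1)$, $\norb(G_1,v_1)+\norb(G_2,v_2)$, or $\sum_i \norb(G_i,v_i)$ according to the type, and then apply Lemma~\ref{lem:orbitineq2} to each rooted piece $(G_i,v_i)$ with $G_i\in\Delta_{k-1}$ to obtain the bounds $2^k$, $2\cdot 2^k$, $3\cdot 2^k$. Your write-up is more explicit about why the setwise stabilizer of $V(G_i)$ acts on $V(G_i)$ exactly as $\Aut(G_i,v_i)$, which is the point the paper leaves implicit in its appeal to Lemma~\ref{lem:fixauto}.
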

\begin{proof}
  Let $G$ be a delta composition of $G_1, G_2, G_3\in\Delta_{k-1}$
  with the main triangle $v_1v_2v_3$ such that $v_i\in V(G_i)$ for all $i=1,2,3$.
  By Lemma~\ref{lem:fixauto}, 
  \begin{enumerate}
  \item   $\norb(G)=\norb(G_1,v_1)$ if $G$ is Type-A,
  \item   $\norb(G)=\norb(G_1,v_1)+\norb(G_2,v_2)$ if $G$ is Type-B
    and $(G_1,v_1)$ is isomorphic to $(G_3,v_3)$,
  \item 
    $\norb(G)=\norb(G_1,v_1)+\norb(G_2,v_2)+\norb(G_3
    ,\linebreak[0]v_3)$ if $G$ is Type-C.
  \end{enumerate}
  By Lemma~\ref{lem:orbitineq2}, we deduce the lemma.
\end{proof}

Let $p_k$ be the number of non-isomorphic rooted graphs $(G,v)$ with $G\in\Delta_k$.
Then $p_0=1$, $p_1=2$, and $p_2=24$ (see Figure~\ref{fig:deltatwo}). 
We can easily verify that $\Delta_k$ has 
\begin{itemize}
\item exactly $p_{k-1}$ non-isomorphic Type-A graphs,
\item  exactly $p_{k-1}(p_{k-1}-1)$ non-isomorphic Type-B  graphs, 
\item  exactly $\binom{p_{k-1}}{3}$ non-isomorphic Type-C graphs.
\end{itemize}

We are now ready to provide a lower bound on the number of
non-isomorphic graphs in $\Delta_k$.

\begin{PROP}\label{prop:sizedelta}
Let $k \ge 2$ be an integer. Then $\Delta_k$ has at least
$2^{\Omega(3^k)}$ non-isomorphic graphs.
\end{PROP}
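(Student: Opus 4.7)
My plan is to first establish a cubic recursion for $p_k$, iterate it to obtain $p_k \ge 2^{\Omega(3^k)}$, and then conclude using the fact that $|\Delta_k/{\cong}|$ is at least the number of Type-C graphs $\binom{p_{k-1}}{3}$.

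The first step rests on the identity
\[
p_k \;=\; \sum_{[G]\,\in\,\Delta_k/{\cong}} \norb(G),
\]
which holds because each unrooted graph $G \in \Delta_k$ contributes exactly $\norb(G)$ pairwise non-isomorphic rooted versions. Restricting this sum to Type-C graphs and using both Lemma~\ref{lem:orbit} (which gives $\norb(G) \ge 3\cdot 2^k$ for such $G$) and the fact that there are $\binom{p_{k-1}}{3}$ non-isomorphic Type-C graphs, I would obtain
\[
p_k \;\ge\; 3\cdot 2^k \cdot \binom{p_{k-1}}{3} \;=\; 2^{k-1}\cdot p_{k-1}(p_{k-1}-1)(p_{k-1}-2),
\]
which is at least $p_{k-1}^3 \cdot 2^{k-2}$ as soon as $p_{k-1}$ is large enough (e.g.\ $p_{k-1}\ge 10$, which holds from $k=3$ onward since $p_2 = 24$).

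Next I would iterate. Setting $a_k = \log_2 p_k$ and $b_k = a_k/3^k$, the recursion reduces to $b_k \ge b_{k-1} + (k-2)/3^k$. Since the series $\sum_{k\ge 2}(k-2)/3^k$ converges, summing from the base case $b_2 = (\log_2 24)/9 > 0$ shows that $b_k$ is bounded below by a strictly positive constant $c$ for all $k \ge 2$. Hence $p_k \ge 2^{c\cdot 3^k}$.

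Finally, since the number of non-isomorphic graphs in $\Delta_k$ is at least $\binom{p_{k-1}}{3} \ge \tfrac{1}{6}p_{k-1}^3$ for $p_{k-1}$ large, combining with $p_{k-1} \ge 2^{c\cdot 3^{k-1}}$ yields $|\Delta_k/{\cong}| \ge 2^{3c\cdot 3^{k-1} - O(1)} = 2^{\Omega(3^k)}$, as required. The main obstacle I anticipate is verifying that the constant $c$ coming out of the iteration is genuinely positive; this is ensured by the cubic factor $p_{k-1}^3$ from Type-C graphs dominating the multiplicative losses, together with the sizable base value $p_2 = 24$.
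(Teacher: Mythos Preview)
Your argument is correct and follows essentially the same route as the paper: both use the orbit-counting identity $p_k=\sum_{[G]}\norb(G)$ together with Lemma~\ref{lem:orbit} to obtain a cubic recursion for $p_k$, and then iterate. The paper's version sums over all three types to get the exact expression $2^{k-1}p_{k-1}^2(p_{k-1}+1)\ge 2^{k-1}p_{k-1}^3$ and concludes directly from the Type-A count $a_k=p_{k-1}$ rather than from $\binom{p_{k-1}}{3}$; note also that your inequality $\binom{p_{k-1}}{3}\ge\tfrac{1}{6}p_{k-1}^3$ is written the wrong way round, though any bound of the form $\binom{p_{k-1}}{3}\ge c\,p_{k-1}^3$ for large $p_{k-1}$ suffices and the asymptotic is unaffected.
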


\begin{proof}
Let $a_k$, $b_k$, $c_k$ be the number of non-isomorphic graphs in
$\Delta_k$ that is Type-A, Type-B, and Type-C respectively.
By Lemma~\ref{lem:orbit}, 
\[
p_k\ge 2^k a_k + 2 \cdot 2^k b_k + 3 \cdot 2^k c_k.
\]
Since $a_k=p_{k-1}$, $b_k=p_{k-1}(p_{k-1}-1)$ and $c_k=\binom{p_{k-1}}{3}$, we obtain the following recurrence relation;
\[
a_{k+1}
=p_k \ge 2^k {a_k} + 2 \cdot 2^k {b_k} + 3 \cdot 2^k {c_k} 
=2^{k-1} a_k^2 (a_k+1)
\ge 2^{k-1}a_k^3
\]
and ${a_2}=2$. 
We deduce that $a_k\ge 2^{(1-2k)/4+7\cdot 3^k/36}= 2^{\Omega(3^k)}$. 
\end{proof}

Now we can combine all to prove our main theorem.
\begin{proof}[Proof of Theorem~\ref{thm:bigmain}]
By Theorems~\ref{thm:main}, 
$\mathcal O_k$ must contain a graph locally equivalent to each graph
in $\Delta_k$.
Proposition~\ref{prop:sizedelta} states that $\Delta_k$ has at least
$2^{\Omega(3^k)}$ non-isomorphic graphs.
Lemma~\ref{lem:csblock} and Theorem~\ref{thm:blockbou}
 show that two non-isomorphic
graphs in $\Delta_k$ cannot be locally equivalent. 
Therefore, $\abs{\mathcal{O}_k}\geq 2^{\Omega(3^k)}$.
\end{proof}

\section{Concluding remarks}\label{sec:final}

	We present $2^{\Omega(3^k)}$ lower bound of 
	the number of pairwise locally non-equivalent vertex-minor minimal graphs 
	with the property that they have linear rank-width larger than $k$.

    A question naturally arises in the context.
\begin{QUE}  
  Find an explicit upper bound on the number of vertices in 
    a graph that is vertex-minor minimal 
    with the property having  linear rank-width larger than $k$.
\end{QUE}
So far, we do not know any explicit upper bound; its existence is
given by Corollary~\ref{cor:vertexminorwqo2}. 
The only known fixed-parameter algorithm to decide linear rank-width
at most $k$ is based on this list; it uses the modulo-$2$ counting
monadic second-order logic formula to decide whether a given graph has
linear rank-width at most $k$ by using the existence of forbidden
vertex-minors. 
However, no explicit methods are known to construct such a list of
forbidden vertex-minors and so perhaps we can say ``we know such an
algorithm exists but we do not know what it is.''

A similar problem occurs in the problem of
deciding rank-width at most $k$ in Courcelle and Oum~\cite{CO2007}.
But for rank-width, there is an explicit upper bound on the number of
vertices of forbidden vertex-minors \cite{Oum05}
and therefore in theory, one can enumerate all graphs up to that bound
and construct the list of forbidden vertex-minors.
If we resolve the above question, then we will be able to \emph{construct} a
fixed parameter algorithm to decide linear rank-width at most $k$.

%
%
%
%

%
%
%
%

\providecommand{\bysame}{\leavevmode\hbox to3em{\hrulefill}\thinspace}
\providecommand{\MR}{\relax\ifhmode\unskip\space\fi MR }
\providecommand{\MRhref}[2]{%
  \href{http://www.ams.org/mathscinet-getitem?mr=#1}{#2}
}
\providecommand{\href}[2]{#2}

\begin{appendix}
\section{Canonical decompositions of graphs in $\Delta_k$}\label{sec:appendix}
        \renewcommand{\theTHM}{\Alph{section}.\arabic{THM}}

	We now aim to describe the canonical decomposition $D_G$ of
        each graph $G$ in $\Delta_k$ for $k\ge 1$ explicitly.
        Let us call the edges of the graph in $\Delta_0$ \emph{thick}.
        In graphs in $\Delta_k$, the edges originated from $\Delta_0$
        are \emph{thick} and all other edges introduced by a
        delta composition are \emph{thin}.
	Observe the set of thick edges of $G\in \Delta_k$ is a perfect
        matching and therefore we deduce the following.

	\begin{LEM}\label{lem:leafedge}
          For graphs in $\Delta_k$,
          each leaf is incident only with a thick edge and no two
        leaves have a common neighbor. 
\end{LEM}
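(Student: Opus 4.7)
The plan is to formalize the parenthetical remark that precedes the lemma, namely that the set of thick edges of any $G \in \Delta_k$ forms a perfect matching of $V(G)$, and then to deduce both parts of the lemma as direct consequences. I would prove the perfect matching claim by induction on $k$. For the base case $k=0$, the graph is $K_2$ and its unique edge is the sole thick edge, which trivially matches the two vertices. For the inductive step, suppose $G \in \Delta_k$ is a delta composition of $G_1, G_2, G_3 \in \Delta_{k-1}$ with main triangle $v_1 v_2 v_3$. By the definition of thick versus thin edges, the three triangle edges are thin, so the thick edges of $G$ are precisely the disjoint union of the thick edges of $G_1$, $G_2$, and $G_3$. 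By the induction hypothesis, each of these sets is a perfect matching on $V(G_i)$, and since $V(G) = V(G_1) \sqcup V(G_2) \sqcup V(G_3)$, their union is a perfect matching of $V(G)$.

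Granting this, the first assertion of the lemma follows immediately: if $v$ is a leaf of $G$, then $v$ has exactly one incident edge; on the other hand, the perfect matching supplies some thick edge incident with $v$, and this must coincide with $v$'s unique edge, so that edge is thick.

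For the second assertion, suppose for contradiction that two distinct leaves $u_1, u_2$ share a common neighbor $w$. Then both $u_1 w$ and $u_2 w$ are edges of $G$, and because $u_1$ and $u_2$ are leaves, these are their unique incident edges. By the first assertion, both $u_1 w$ and $u_2 w$ are thick. But then $w$ is incident with at least two thick edges, contradicting the fact that thick edges form a matching.

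I expect no real obstacle here: the lemma is essentially a direct corollary of the perfect-matching observation, which in turn is a transparent induction on the recursive construction of $\Delta_k$. The only mild subtlety is being careful that no new thick edges arise during a delta composition (only the triangle edges are added, and these are thin by definition), and that the $V(G_i)$ are pairwise disjoint so that the inductive matchings combine cleanly.
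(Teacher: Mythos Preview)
Your proposal is correct and follows exactly the approach the paper indicates: the paper simply remarks that the thick edges form a perfect matching and states the lemma as a direct consequence, and you have supplied the straightforward induction and the two easy deductions that flesh this out.
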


For  $G\in \Delta_k$,
	let $\mathcal{C}(G)$ be the set of triangles in $G$.
        First let us describe the set $\Theta(G)$ of marked vertices
        of $D_G$.
        For each thick edge $uv$ joining two non-leaf vertices, we
        have two new vertices $m(u,v)$ and $m(v,u)$ in $\Theta(G)$
        and for each pair of a vertex $v$ and a triangle $C$
        containing $v$, we have two new vertices $m(C,v)$ and
        $m(v,C)$.
        We will construct $D_G$ so that $V(D_G)$ is the disjoint union
        of $V(G)$ and $\Theta(G)$.
	For convenience, if $w$ is a leaf incident with an (thick) edge $vw$, 
	then $m(v,w):=w$.

	Now we describe all bags of $D_G$.
	For each vertex $v$ in $G$ of degree $n>1$,
        if $w$ is the unique neighbor of $v$ joined by a thick edge, then 
	let $B(v)$ be the graph isomorphic to $K_{1,(n-1)/2+1}$ on the vertex set 
        \[
	\{v,m(v,w)\}
	\cup \{m(v,C): C\in \mathcal{C}(G), v\in V(C)\}
	\]
	with the center $v$. 
	For each triangle $C$ of $G$,
	let $B(C)$ be the graph isomorphic to $K_3$ on the vertex set
        $\{m(C,v):v\in V(C)\}$.
	
	Let $D_G$ be the marked graph on the vertex set $\Theta(G)\cup V(G)$
	such that all bags of $D_G$ are  
	\[\{B(v):v\text{ is a non-leaf vertex in }G\} \cup \{B(C):C\in \mathcal{C}(G)\}\]
	and 
        the set $M(D_G)$ of all marked edges is exactly 
	\begin{multline*}
	\{m(v,C)m(C,v): C\in \mathcal{C}(G), v\in V(C)\}\\
	\cup\{m(v,w)m(w,v): vw \text{ is the thick edge joining
          two non-leaf vertices}\} 
	\end{multline*} 
	For a graph $G$ in $\Delta_2$, the marked graph $D_G$ is
        depicted in Figure~\ref{fig:canonical}.

	We now show that if $G\in \Delta_k$, then $D_G$ is the canonical decomposition of $G$.

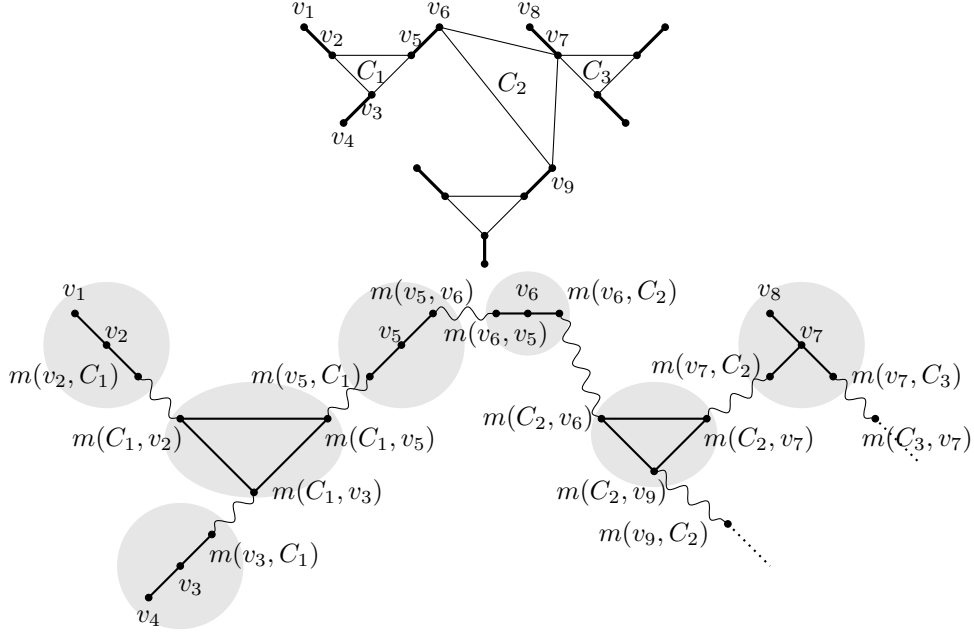
\begin{figure}\centering
 \tikzstyle{v}=[circle, draw, solid, fill=black, inner sep=0pt, minimum width=2.5pt]
 \tikzset{photon/.style={decorate, decoration={snake}}}
\begin{tikzpicture}[scale=0.075]

\draw (8,45) -- (22,45) -- (15, 38) -- (8, 45);
\node at (15,42) {$C_1$};

\draw (28,20) -- (42,20) -- (35, 13) -- (28, 20);

\draw (48,45) -- (62,45) -- (55, 38) -- (48, 45);
\node at (55,42) {$C_3$};

\draw[very thick] (8,45) -- (3, 50);
\node at (3,53) {$v_1$};
\node at (8,48) {$v_2$};

\draw[very thick] (22,45) -- (27, 50);
\node at (22,48) {$v_5$};
\node at (27,53) {$v_6$};

\draw[very thick] (15,38) -- (10, 33);
\node at (15,35) {$v_3$};
\node at (10,30) {$v_4$};

\draw[very thick] (28,20) -- (23, 25);
\draw[very thick] (42,20) -- (47, 25);
\draw[very thick] (35,13) -- (35, 8);
\draw[very thick] (48,45) -- (43, 50);
\node at (43,53) {$v_8$};
\node at (48,48) {$v_7$};

\node at (49,22) {$v_9$};

\draw[very thick] (62,45) -- (67, 50);
\draw[very thick] (55,38) -- (60, 33);

\draw (27,50) -- (47,25) -- (48,45) -- (27,50);
\node at (40,40) {$C_2$};

\node [v] at (8,45) {};
\node [v] at (3,50) {};
\node [v] at (22,45) {};
\node [v] at (27,50) {};
\node [v] at (15,38) {};
\node [v] at (10,33) {};
\node [v] at (28,20) {};
\node [v] at (23,25) {};
\node [v] at (42,20) {};
\node [v] at (47,25) {};
\node [v] at (35,13) {};
\node [v] at (35,8) {};

\node [v] at (48,45) {};
\node [v] at (43,50) {};
\node [v] at (62,45) {};
\node [v] at (67,50) {};
\node [v] at (55,38) {};
\node [v] at (60,33) {};

\end{tikzpicture}

\begin{tikzpicture}[scale=0.14]

\path [fill=gray!20] (-3,43) ellipse (8.5 and 5.5);
\draw[thick] (0-10,45) -- (14-10,45) -- (7-10, 38) -- (0-10, 45);
\node at (-15,43) {$m(C_1,v_2)$};
\node at (9,43) {$m(C_1,v_5)$};
\node at (4,38) {$m(C_1,v_3)$};

\path [fill=gray!20] (-17,52) circle (6);
\draw[thick] (-4-10, 49) -- (-7-10,52) -- (-10-10,55);
\node at (-21,49) {$m(v_2,C_1)$};
\node at (-16,53) {$v_2$};
\node at (-20,57) {$v_1$};

\draw[photon] (0-10,45) -- (-4-10, 49);

\path [fill=gray!20] (11,52) circle (6);
\draw[thick] (18-10, 49) -- (21-10,52) -- (24-10,55);

\node at (2,49) {$m(v_5,C_1)$};
\node at (10,53) {$v_5$};
\node at (13,57) {$m(v_5,v_6)$};

\path [fill=gray!20] (23,55) circle (4);
\draw[photon] (14-10,45) -- (18-10, 49);
\draw[photon] (24-10,55) -- (30-10, 55);
\draw[thick] (30-10,55) -- (33-10,55) -- (36-10,55);
\node at (20,53) {$m(v_6,v_5)$};
\node at (23,57) {$v_6$};
\node at (32,57) {$m(v_6,C_2)$};

\path [fill=gray!20] (-10,31) circle (6);
\draw[thick] (3-10, 34) -- (0-10,31) -- (-3-10,28);
\node at (-2,32) {$m(v_3,C_1)$};
\node at (-9,29) {$v_3$};
\node at (-13,26) {$v_4$};

\draw[photon] (7-10,38) -- (3-10, 34);

\node [v] at (0-10,45) {};
\node [v] at (-4-10,49) {};
\node [v] at (-7-10,52) {};
\node [v] at (-10-10,55) {};
\node [v] at (14-10,45) {};
\node [v] at (18-10,49) {};
\node [v] at (21-10,52) {};
\node [v] at (24-10,55) {};
\node [v] at (7-10,38) {};
\node [v] at (3-10,34) {};
\node [v] at (0-10,31) {};
\node [v] at (-3-10,28) {};

\node [v] at (30-10,55) {};
\node [v] at (33-10,55) {};
\node [v] at (36-10,55) {};

\path [fill=gray!20] (49,52) circle (6);
\path [fill=gray!20] (35,43.5) ellipse (6 and 5);

\draw[thick] (52, 49) -- (49,52) -- (46,55);
\draw[photon] (56,45) -- (52, 49);
\draw[thick,dotted] (60, 41) -- (56, 45);
\draw[thick] (49,52) -- (46,49);
\node at (59,49) {$m(v_7,C_3)$};
\node at (50,53) {$v_7$};
\node at (46,57) {$v_8$};
\node at (40,50) {$m(v_7,C_2)$};

\node at (60,43) {$m(C_3,v_7)$};

\node [v] at (56,45) {};
\node [v] at (52,49) {};
\node [v] at (49,52) {};
\node [v] at (46,55) {};

\node [v] at (46,49) {};

\node [v] at (42,35) {};

\draw[thick] (30,45) -- (40,45) -- (35, 40) -- (30, 45);
\node at (24,45) {$m(C_2,v_6)$};
\node at (45,43) {$m(C_2,v_7)$};
\node at (31,38) {$m(C_2,v_9)$};

\node [v] at (30,45) {};
\node [v] at (40,45) {};
\node [v] at (35,40) {};

\draw[photon] (30,45) -- (26, 55);
\draw[photon] (40,45) -- (46, 49);
\draw[photon] (35,40) -- (42, 35);
\draw[thick,dotted] (42,35) -- (46, 31);
\node at (35,34) {$m(v_9,C_2)$};

\end{tikzpicture}

\caption{A graph $G\in \Delta_2$ with thick edges, and a part of $D_G$.}\label{fig:canonical}
\end{figure}

\begin{PROP}
	For each graph $G\in \Delta_k$ with $k\ge 1$,
	the marked graph $D_G$ is the canonical decomposition of $G$.
\end{PROP}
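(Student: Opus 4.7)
The plan is to verify that $D_G$ is the canonical decomposition of $G$ by establishing the two defining properties and invoking uniqueness (Lemma~\ref{lem:canonicaleq}). Specifically, I will show (a) that $D_G$ is a split decomposition of $G$, and (b) that $D_G$ satisfies the three canonical structural conditions: each bag is star or complete, no two complete bags are neighbors, and any two neighboring star bags meet through a marked edge whose two endpoints are both leaves of their respective bags.

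For (b), the verification is direct from the construction of $D_G$. Every $B(v)$ is isomorphic to $K_{1,(n-1)/2+1}$ where $n=\deg_G(v)$ (odd by Lemma~\ref{lem:csblock}), hence a star bag whose center is the unique unmarked vertex $v$; every $B(C)$ is isomorphic to $K_3$, hence a complete bag. Since the only marked edges incident with $B(C)$ are of the form $m(C,v)m(v,C)$, joining $B(C)$ to the star bag $B(v)$, no two complete bags are neighbors. Two star bags $B(u)$ and $B(v)$ are neighbors only when $uv$ is a thick edge joining two non-leaves, and the connecting marked edge $m(u,v)m(v,u)$ has both endpoints leaves of their stars (the centers being the unmarked vertices $u$ and $v$ themselves), so condition (iii) holds.

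For (a), the plan is to verify that recomposing all marked edges of $D_G$ in a suitable order yields $G$; the reverse sequence then exhibits $D_G$ as obtainable from $G$ by successive simple decompositions, which is the recursive definition of a split decomposition. I proceed in two stages. First, for each triangle $C=v_1v_2v_3\in\mathcal{C}(G)$ I recompose the three marked edges $m(v_i,C)m(C,v_i)$ one by one; a direct unpacking of the recomposition operation shows that after processing all three, the bag $B(C)$ is consumed and precisely the three edges of $C$ reappear among $v_1,v_2,v_3$ while no other adjacency is altered. Second, for each thick edge $uv$ joining two non-leaves I recompose $m(u,v)m(v,u)$; since both marked vertices retain a unique unmarked neighbor ($u$ and $v$ respectively) throughout the process, this recomposition simply adds $uv$ to the current graph and deletes both marked endpoints. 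Combined with the convention $m(v,w):=w$ for a leaf $w$ (so that thick edges incident with a leaf are already present in $D_G$), this reproduces exactly the edge set of $G$. The main bookkeeping task, which I expect to be the principal obstacle, is to check that at each intermediate step the marked edge about to be processed still joins two distinct bags of the current partial decomposition (otherwise the reverse move is not a valid simple decomposition); this holds because each recomposition merges only the two bags at its endpoints and leaves all other marked edges between previously-distinct bags intact, so no marked edge ever degenerates into a loop.
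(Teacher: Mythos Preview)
Your argument is correct, and part (b) matches the paper's verification essentially verbatim. Part (a), however, follows a genuinely different route. The paper proves that $D_G$ is a split decomposition of $G$ by induction on $k$: it isolates the main triangle $C=v_1v_2v_3$, observes that the three components $D_i$ of $D_G\setminus V(B(C))$ satisfy $D_i\setminus m(v_i,C)=D_{G_i}$ (or a one-step recomposition thereof when $v_i$ is a leaf of $G_i$), applies the induction hypothesis to each $G_i\in\Delta_{k-1}$, and then recomposes the three marked edges around $B(C)$ to recover $G$. Your approach instead bypasses induction entirely and performs a direct global recomposition in a fixed order (all triangle-type marked edges first, then all thick-edge-type marked edges), checking the effect of each step by hand.

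Both approaches work. Yours is more elementary and makes the correspondence between bags of $D_G$ and edges of $G$ completely explicit; it also does not rely on the recursive definition of $\Delta_k$ at all, only on the description of $D_G$. The paper's inductive proof is shorter and more structural, since the delta-composition does most of the bookkeeping automatically. One point you should make explicit when writing out your version: the reason no marked edge ever becomes internal to a single bag is that the bags and marked edges of $D_G$ form a tree (this mirrors the block--cutvertex tree of $G$), so each recomposition is a tree-edge contraction and cannot create a loop; and since every bag of $D_G$ has at least three vertices, both sides of every reverse step have size at least two, as required for a split.
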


\begin{proof}
	We first prove that $D_G$ is a split decomposition of $G$. 
	We use induction on $k$. %
	We may assume that $k\geq 2$ and let $C$ be the main triangle $v_1v_2v_3$ of $G$.  
	For each $1\le i\le 3$, 
	let $G_i$ be the component of $G\setminus v_1v_2\setminus v_2v_3\setminus v_3v_1$ 
	such that $v_i\in V(G_i)$,
	and let $D_{i}$ be the component of $D_G[V(D_G)\setminus \{m(C,v_1), m(C,v_2), m(C,v_3)\}]$ 
	such that $v_i\in V(D_{i})$.
	Let $w_i$ be the neighbor of $v_i$ such that $v_iw_i$ is thick.

        If $v_i$ is not a leaf in $G_i$, then 
        by construction, $D_i\setminus m(v_i,C)=D_{G_i}$.
        If $v_i$ is a leaf of $G_i$, then $D_{G_i}$ is obtained from
        $D_i\setminus m(v_i,C)$ by recomposing a marked edge joining
        $m(v_i,w_i)$ and $m(w_i,v_i)$.
        By the induction hypothesis, $D_{G_i}$ is a split
        decomposition of $G_i$ and therefore in both cases,
        $D_i\setminus m(v_i,C)$ is a split decomposition of $G_i$
        because we obtain $G_i$ from $D_i\setminus m(v_i,C)$ by
        recomposing all marked edges of $D_i\setminus m(v_i,C)$.

        Let $G_i'$ be the graph obtained from $D_i$ by recomposing all
        marked edges of $D_i$. 
        Then $m(v_i,C)$ is a leaf of $G_i'$
        and $G_i'\setminus m(v_i,C)=G_i$.

        If we recompose all marked edges of $D_G$ except three marked
        edges associated with $C$,
	then we obtain a marked graph obtained from the disjoint union
        of  
        $G_1'$, $G_2'$, $G_3'$, and $B(C)$
	by adding three marked edges in $\{m(v_i,C),m(C,v_i)\}_{1\le
          i\le 3}$.
        It is then clear that $G$ is obtained from this graph by recomposing three marked edges in
        $\{m(v_i,C),m(C,v_i)\}_{1\le i\le 3}$ from this graph.
        This proves that $D_G$ is a split decomposition of $G$.
 	
        It remains to check that $D_G$ is a canonical
        decomposition. 
	From the construction, every bag of $D_G$ is a complete bag or a star bag,	 
	and every star bag has marked vertices only on its leaves and no
        two complete bags are neighbors. This proves the lemma.
\end{proof}

\end{appendix}	
\end{document}